\title{Unit Mixed Interval Graphs}
\author{Alan Shuchat \\  Department of Mathematics \\ Wellesley College \\
Wellesley, MA 02481 \ \ USA \\ \and
Randy Shull \\ Department of
Computer Science        \\ Wellesley College
\\ Wellesley, MA 02481 \ \ USA \\
\and Ann N.\ Trenk \\ Department of Mathematics \\ Wellesley College \\
Wellesley, MA 02481 \ \ USA\ \
\and Lee C. West \\Department of Mathematics \\Wellesley College\\ Wellesley, MA 02481 \ \ USA
}
\date{May 7, 2014}
\newtheorem{theorem}{Theorem}
\newtheorem{defn}[theorem]{Definition}
\newtheorem{remark}[theorem]{Remark}
\newtheorem{lemma}[theorem]{Lemma}
\newtheorem{prop}[theorem]{Proposition}
\newtheorem{corollary}[theorem]{Corollary}
\newtheorem{hyp}[theorem]{Hypothesis}
\newtheorem{alg}[theorem]{Algorithm}
\newcommand{\qed}{\mbox{$\Box$}}
\def\reals{{\mathbb R}}
\newcommand{\ahat}{{\hat{a}}}
\newcommand{\bhat}{{\hat{b}}}
\newcommand{\chat}{{\hat{c}}}
\newcommand{\bolda}{{\bf{a}}}
\newcommand{\boldb}{{\bf{b}}}
\begin{document}

  \maketitle

\bibliographystyle{plain}

\begin{center} {\sl ABSTRACT} \end{center}

\begin{quotation}

 In this paper we extend the work of Rautenbach and Szwarcfiter \cite{RaSz13} by giving a structural characterization of graphs that can be represented by the intersection of unit intervals that may or may not contain their endpoints. A characterization was proved independently by Joos in \cite{Jo13}, however our approach provides an algorithm that produces such a representation,  as well as a forbidden graph characterization.

\end{quotation}

\section{Introduction}

Interval graphs are important because they can be used to model problems in the real world,   have elegant characterization theorems, and efficient recognition algorithms.  In addition, some graph problems that are known to be difficult in general, such as finding the chromatic number, can be solved efficiently when restricted to the class of interval graphs (see for example, \cite{Go80,GoTr04}).

A graph $G$ is an \emph{interval graph} if each vertex can be assigned an interval on the real line so that two vertices are adjacent in $G$ precisely when the intervals intersect.  In some situations, all the intervals will have the same length and such a graph is called a \emph{unit interval graph}.   The   unit interval graphs are characterized  in \cite{Ro69} as those interval graphs with no induced claw $K_{1,3}$.

Many papers about interval graphs do not specify whether the assigned intervals are open or closed, and indeed the class of interval graphs is the same whether open or closed intervals are used.  The same is true for the class of unit interval graphs.    Rautenbach and Szwarcfiter \cite{RaSz13} consider the class of graphs that arise when both open and closed intervals are permitted in the same representation.  The class of interval graphs remains unchanged even when both open and closed intervals are allowed.  However,  the class of unit interval graphs is enlarged.  In particular, the claw $K_{1,3}$ can be represented (uniquely) using one open interval and three closed intervals.  Rautenbach and Szwarcfiter  \cite{RaSz13} use the notation $U^\pm$ to designate this class and characterize the graphs in $U^\pm$  as those interval graphs that do not contain any of seven  forbidden graphs.

This result is  established   by considering twin-free graphs, that is, graphs in which no two vertices have the same closed neighborhood.  For twin-free graphs, the characterization of $U^\pm$ requires only four forbidden graphs. Limiting attention to twin-free graphs is not a substantive restriction:  if a graph contains twins  we can remove duplicates, apply the structural characterization to the resulting twin-free graph, and then restore each duplicate, giving it an interval identical to that of its twins.  

Dourado et al.~\cite{DoLe} generalize the results of \cite{RaSz13} to the class of \emph{mixed interval graphs}, that is to graphs that can be represented using intervals that are open, closed, or half-open.  They further pose a conjecture characterizing the class of mixed unit interval graphs in terms of an infinite family of forbidden induced subgraphs, proving the conjecture for the special case when the graph is diamond-free.  Le and Rautenbach \cite{LeRa} characterize graphs that have  mixed unit interval representations in which all intervals have integer endpoints and provide a quadratic-time algorithm that decides if a given interval graph admits such a representation.   

The present paper addresses the conjecture given in \cite{DoLe}.    The main result is to give a structural characterization of the class of twin-free unit mixed interval graphs.  Our characterization includes a set $\cal F$ consisting of five individual forbidden graphs and  five infinite forbidden families.  A characterization was proved independently by Joos in \cite{Jo13}.  In addition to characterizing the class, our approach provides a quadratic-time  algorithm that takes a twin-free, $\cal F$-free interval graph and produces a mixed interval representation of it, where the only proper inclusions are between intervals with the same endpoints.

\section{Preliminaries}
We denote the left and right endpoints of a real interval $I(v)$ by $L(v)$ and $R(v)$, respectively.  
We say an interval is \emph{open on the left} if it does not contain its left endpoint and \emph{closed on the left} if it does.   Open and closed on the right are defined similarly.
In the introduction, we gave the usual definition of interval graph; we now give a more nuanced definition that allows us to specify which types of intervals are permissible.  

 \begin{defn}  {\rm Let $\cal R$ be a set of real intervals.  An {$\cal R$}-\emph{intersection representation} of a graph $G $ is an assignment ${\cal I}: x \rightarrow I(x)$ of an interval $I(x) \in \cal R$ to each  $x \in V(G)$ so that  $xy \in E(G)$ if and only if $I(x) \cap I(y) \neq \emptyset$.}

 \end{defn}
 
 Throughout this paper, we will denote the classes of closed, open, and half-open intervals by  ${\cal A} = \{ [x,y]: x,y \in \reals \}$,  \  ${\cal B} = \{ (x,y): x,y \in \reals \}$, \   ${\cal C} = \{ (x,y]: x,y \in \reals \}$, \   ${\cal D} = \{ [x,y): x,y \in \reals \}$.
 
 \begin{defn} {\rm
  A graph $G$ is an \emph{interval graph} if it has an $\cal A$-intersection representation.  If in addition, all intervals in the representation have the same length, then $G$ is a \emph{unit interval graph}.  We call this a (\emph{unit}) \emph{closed interval representation} of $G$.}
   \end{defn}

 \begin{defn} {\rm
  A graph $G$ is a \emph{mixed interval graph} if it has an ${\cal R}$-intersection representation, where $\cal R = {\cal A} \cup {\cal B} \cup {\cal C} \cup {\cal D}$.  If in addition, all intervals in the representation have the same length, then $G$ is a (\emph{unit}) \emph{mixed  interval graph}. We call this a (\emph{unit}) \emph{mixed interval representation}.  }
 \label{R-defn}
 \end{defn}
  The proof of the following proposition is part of a similar result from \cite{RaSz13} and  a sketch is included here for completeness.
 \begin{prop}
 A graph   is a mixed interval graph  if and only if it is an interval graph.
  \label{mixed-int-prop}
  \end{prop}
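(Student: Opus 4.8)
The plan is straightforward, and almost all of it is bookkeeping. The implication ``interval graph $\Rightarrow$ mixed interval graph'' requires nothing, since ${\cal A} \subseteq {\cal A} \cup {\cal B} \cup {\cal C} \cup {\cal D}$, so an ${\cal A}$-intersection representation is automatically an ${\cal R}$-intersection representation in the sense of Definition~\ref{R-defn}. For the converse I would start from a mixed interval representation ${\cal I}$ of $G$ and build an ${\cal A}$-representation of the same graph by nudging every \emph{excluded} endpoint slightly inward so that every interval may be taken closed.

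Here is the construction. Let $p_1 < p_2 < \cdots < p_k$ be the finitely many distinct reals occurring as endpoints of the intervals $I(v)$, let $\delta = \min_{i}(p_{i+1}-p_i)$ be the smallest gap between consecutive ones, and fix $\varepsilon$ with $0 < \varepsilon < \delta/2$. For each vertex $v$ let the new left endpoint be $L(v)$ if $I(v)$ is closed on the left and $L(v)+\varepsilon$ if it is open on the left; symmetrically, let the new right endpoint be $R(v)$ if $I(v)$ is closed on the right and $R(v)-\varepsilon$ if it is open on the right; and let $I'(v)$ be the \emph{closed} interval with these two endpoints (a degenerate interval $[p,p]$ is left unchanged). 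This yields an ${\cal A}$-assignment ${\cal I}'$, and it remains to check that $I'(u)\cap I'(v)\neq\emptyset$ exactly when $I(u)\cap I(v)\neq\emptyset$.

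This is a short case analysis on a pair $u,v$. If $I(u)\cap I(v)$ has positive length, it is an interval with endpoints $\max(L(u),L(v))$ and $\min(R(u),R(v))$, both of which are among the $p_i$, so its length is at least $\delta$; since every endpoint moves inward by at most $\varepsilon<\delta/2$, the sets $I'(u)$ and $I'(v)$ still overlap. If $I(u)\cap I(v)$ is a single point $p$, then $p$ is an endpoint of each of the two intervals and, as $p$ lies in both, each is closed on the side meeting $p$, so those endpoints are left unchanged and $p\in I'(u)\cap I'(v)$. If $I(u)\cap I(v)=\emptyset$, then either the right endpoint of one interval is strictly below the left endpoint of the other --- a gap of size at least $\delta>2\varepsilon$ that no inward move of size at most $\varepsilon$ can close --- or the two intervals abut at a common value $p$ that lies outside at least one of them, in which case that interval is open at $p$, so its relevant endpoint is shifted strictly inward while the other endpoint does not move outward; either way $I'(u)$ and $I'(v)$ remain disjoint. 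In every case adjacency is preserved, so ${\cal I}'$ represents $G$ and $G$ is an interval graph.

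The only part needing care is this last case, where the open/closed bookkeeping is exactly what does the work: a blunter recipe such as replacing each interval by its closure would wrongly turn a pair of abutting open intervals into an edge, which is precisely why the $\varepsilon$-shift is needed. Note also that the construction does not respect unit lengths, but that is immaterial here, since Proposition~\ref{mixed-int-prop} concerns interval graphs with no length restriction.
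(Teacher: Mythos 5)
Your proof is correct, but it takes a genuinely different route from the paper's. The paper's argument picks, for each edge $uv$, a witness point $x_{uv}\in I(u)\cap I(v)$ and replaces each $I(v)$ by the closed convex hull of the witness points of its incident edges; since that hull is a subset of the (convex) original interval, non-adjacency is preserved automatically, and adjacency is preserved because $x_{uv}$ lies in both new intervals -- so no case analysis on open/closed endpoints or abutting intervals is needed at all. Your construction instead perturbs every excluded endpoint inward by a uniform $\varepsilon$ smaller than half the minimum endpoint gap and then closes everything; the price is the three-way case analysis (which you carry out correctly -- in particular you rightly identify the abutting-intervals case as the one where naive closure fails), and the payoff is that the new representation stays within $\varepsilon$ of the original and is defined uniformly for every vertex, whereas the paper's min/max over incident edges is, strictly speaking, undefined for isolated vertices (a trivial fix in either case). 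Both arguments are elementary and complete; neither has a gap.
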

    \begin{proof}By definition, interval graphs are mixed interval graphs.  For the converse,
    let $G= (V,E)$ have a mixed interval representation $\cal I$.  For each edge $uv \in E$, pick a point $x_{uv}$ in the set $I(u) \cap I(v)$.  For each vertex $v \in V$, define $I'(v) = [\min\{x_{uv}: uv \in E\}, \max\{x_{uv}: uv \in E\}]. $  One can check that the intervals $I'(v)$ give a closed interval representation of $G$, so $G$ is an interval graph.  
        \qed
    \end{proof}

   An interval graph is \emph{proper} if it has an ${\cal A}$-intersection representation in which no interval is  properly contained in  another.  By definition, the class of unit interval graphs is contained in the class of proper interval graphs, and in fact, the classes are equal \cite{BoWe99}.  When both open and closed intervals are permitted in a representation, we must refine the notion of proper in order to maintain  the inclusion of the unit class in the proper class.
  
 \begin{defn} {\rm  An interval $I(u)$ is \emph{strictly contained} in an interval $I(v)$ if $I(u) \subset I(v)$ and they do not have identical endpoints.
 An ${\cal R}$-intersection representation is  \emph{strict} if no interval in ${\cal R}$ is strictly contained in another, i.e., if
      the only proper inclusions allowed are between intervals with the same endpoints. }
  \end{defn}
         
  \begin{figure}
   
  \centering
{\includegraphics[height=1in]{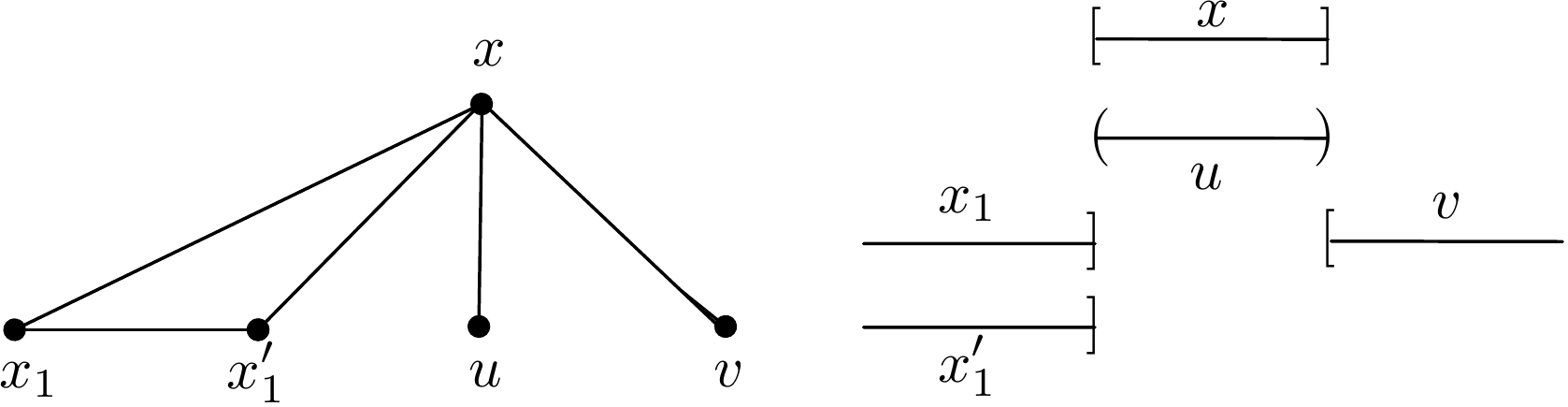}}

   \caption{$K_{1,4}^\ast $ is a unit mixed interval graph.}
  \label{fig-K14-star}
    \end{figure}

	  \begin{figure}   
	 
\centering
{\includegraphics[height=1.26in]{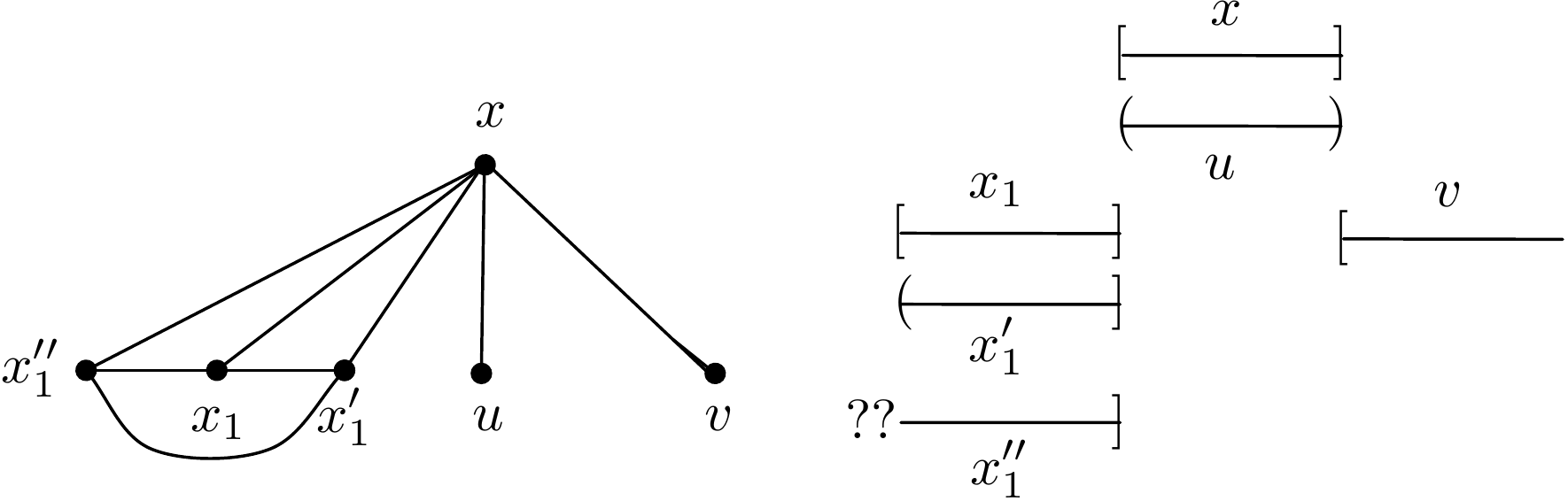}}
\caption{The forbidden graph $B$, which cannot be induced in a twin-free unit mixed interval graph.}  
  \label{fig-B}
   \end{figure}

  \subsection{The forbidden graphs}

  In this section we describe the set ${\cal F}$ of graphs that are forbidden for unit mixed interval graphs.     The claw $K_{1,3}$   is induced by the vertices $x, x_1, u,v$ in the graph $K_{1,4}^*$ shown in Figure~\ref{fig-K14-star}.  The unit mixed interval representation of $K_{1,3}$ shown in Figure~\ref{fig-K14-star} is determined up to relabeling  the vertices, reflecting the entire representation about a vertical line, and specifying whether the outer endpoints of $x_1$ and $v$ are open or closed (which can be done arbitrarily in this case).  In this  and other figures, we shorten the notation by labeling an interval as $x$ instead of $I(x)$ when the meaning is clear.

  With the addition of vertex $x_1'$, Figure~\ref{fig-K14-star}   shows a representation of $K_{1,4}^\ast$ as a unit mixed interval graph.  This representation  can be completed by making the three unspecified endpoints open or closed.  If   $K_{1,4}^\ast$
 is induced in a  twin-free, unit mixed interval graph $G$, then $x_1$ and $x_1'$ cannot be twins in $G$ so one interval must have a closed left endpoint and the other an open one.   As a result,  the graph $B$ shown in Figure~\ref{fig-B} cannot 
 be induced in a twin-free unit mixed interval graph, and indeed,  graph $B$ is  one of the forbidden graphs shown in Figure~\ref{fig-five-forb}.  Similar arguments show that the other graphs in Figure~\ref{fig-five-forb} cannot be induced in $G$. This gives us the following result.
  
  \begin{lemma}
  \label{finite-forb}
  If $G$ is a twin-free unit mixed interval graph then $G$ does not contain any of the five graphs in Figure~\ref{fig-five-forb} as an induced subgraph.
  \end{lemma}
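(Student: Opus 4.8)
The plan is to prove the five non-containments uniformly, extending the argument already carried out above for the graph $B$. For each graph $F$ among the five in Figure~\ref{fig-five-forb} I would suppose, for contradiction, that $F$ is an induced subgraph of a twin-free unit mixed interval graph $G$, fix a unit mixed interval representation $\cal I$ of $G$, and work only with the intervals $I(v)$ for $v \in V(F)$.

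The single fact driving every case is the rigidity of the claw representation recorded above: any unit mixed interval representation of $K_{1,3}$ is, after translating so that the center is $[a,a+1]$, obtained from the representation in Figure~\ref{fig-K14-star} by possibly relabeling the three leaves, reflecting about a vertical line, and choosing the ``outer'' endpoints of the two touching leaves to be open or closed; in particular the center is a closed unit interval, one leaf is $(a,a+1)$, and the other two leaves have endpoint sets $\{a-1,a\}$ and $\{a+1,a+2\}$ and are closed at $a$ and at $a+1$ respectively. Each of the five graphs contains an induced claw; I would fix one and adopt these coordinates, then bring in the remaining vertices of $F$ one at a time. Because all intervals have the same length, the prescribed adjacencies and non-adjacencies of a new vertex to the already-placed vertices leave only a handful of possibilities for its interval --- in the relevant configurations essentially the left-touching position ($[a-1,a]$ or $(a-1,a]$), its mirror on the right, or the middle interval $(a,a+1)$.

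The decisive configuration is the $K_{1,4}^\ast$ pattern isolated above: a center $x$ together with two vertices $x_1, x_1'$ each forced into the left-touching position. Then $I(x_1)$ and $I(x_1')$ both have endpoint set $\{a-1,a\}$ and are closed at $a$, so they intersect and have the same intersection pattern with every other placed interval; thus they can differ only in whether the endpoint $a-1$ is open or closed, and a vertex of $F$ can be adjacent to one of $x_1, x_1'$ but not the other only if its interval is closed at its right endpoint $a-1$ (i.e.\ is $[a-2,a-1]$ or $(a-2,a-1]$, possibly further constrained by the other edges of $F$). If $F$ has no such vertex, then $x_1$ and $x_1'$ have the same closed neighborhood in $G$, contradicting twin-freeness. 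If it does, that vertex together with the rest of $F$ forces a second instance of the same open/closed dichotomy --- on a different pair --- whose two demands are mutually exclusive; this is precisely the incompatibility encoded by graph $B$, and an analogous chain of forced endpoint decisions, terminating either in a twin pair or in two contradictory open/closed requirements (equivalently, a pair of unit intervals that cannot be made to intersect as required), disposes of each of the other four graphs.

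I expect the main obstacle to be the bookkeeping rather than any single idea: the proof is a finite case analysis that must, for each of the five graphs, (i) choose the right induced claw, (ii) discharge the relabeling/reflection ambiguity, and (iii) follow the forced placements through all the extra vertices without missing a branch. The tightest case is the largest of the five forbidden graphs, where the representation is most constrained and the implication chain longest; there one must check that every sub-case truly closes --- with a genuine twin pair in $G$ or a genuinely unrealizable adjacency --- rather than leaving a consistent representation.
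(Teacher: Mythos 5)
Your proposal is correct and takes essentially the same approach as the paper: the paper likewise derives the rigidity of the unit mixed representation of $K_{1,3}$ (and hence of $K_{1,4}^\ast$), invokes twin-freeness to force one of the two left-touching intervals to be open and the other closed, and concludes that $B$ (and, by "similar arguments," the other four graphs) cannot be induced. Like the paper, you leave the remaining four case analyses as a sketch, but the key rigidity fact and the twin-pair/unrealizable-adjacency dichotomy you identify are exactly the paper's argument.
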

\begin{figure}
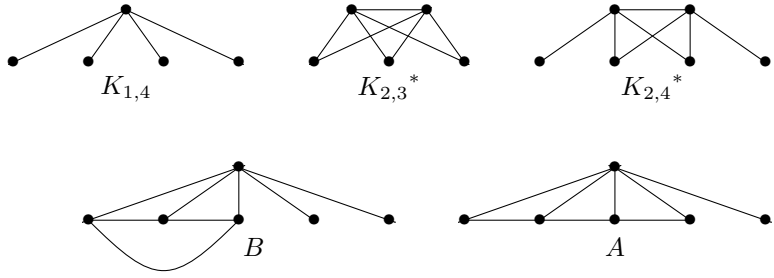

     \centering
  \begin{displaymath}
    \xygraph{
  !{<0cm,0cm>;<1cm,0cm>:<0cm, .7cm>::}
  !{(1.5,21)}*-{\bullet}="p1"
  !{(0,20)}*-{\bullet}="p2" 
  !{(1,20)}*-{\bullet}="p3" 
  !{(2,20)}*-{\bullet}="p4"
  !{(3,20)}*-{\bullet}="p5"
        !{(1.5,19.5)}*-{K_{1,4}}
  !{(4.5,21)}*-{\bullet}="p6"
  !{(4,20)}*-{\bullet}="p7"
  !{(5.5,21)}*-{\bullet}="p8"
  !{(5,20)}*-{\bullet}="p9"
  !{(6,20)}*-{\bullet}="p10"
          !{(5,19.5)}*-{{K_{2,3}}^*}
  !{(7,20)}*-{\bullet}="p11"
  !{(8,21)}*-{\bullet}="p12"
  !{(8,20)}*-{\bullet}="p13"
  !{(9,20)}*-{\bullet}="p14"
  !{(9,21)}*-{\bullet}="p15"
  !{(10,20)}*-{\bullet}="p16"
            !{(8.5,19.5)}*-{{K_{2,4}}^*}
  !{(1,17)}*-{\bullet}="p17"
  !{(3, 18)}*-{\bullet}="p18"
  !{(2,17)}*-{\bullet}="p19"
  !{(3,17)}*-{\bullet}="p20"  
  !{(4,17)}*-{\bullet}="p21"
  !{(5,17)}*-{\bullet}="p22"
            !{(3.2,16.5)}*-{B}
  !{(6,17)}*-{\bullet}="p30"  
  !{(7,17)}*-{\bullet}="p31"
  !{(8,17)}*-{\bullet}="p32"
  !{(9,17)}*-{\bullet}="p33"
  !{(10,17)}*-{\bullet}="p34"
  !{(8,18)}*-{\bullet}="p35"
                !{(8,16.5)}*-{A}
  "p1"-"p2"
  "p1"-"p3"
  "p1"-"p4"
  "p1"-"p5"
  "p6"-"p7"
  "p6"-"p8"
  "p6"-"p9"
  "p6"-"p10"
  "p8"-"p9"
  "p8"-"p7"
  "p8"-"p10"
  "p11"-"p12"
  "p12"-"p13"
  "p12"-"p14"
  "p12"-"p15"
  "p13"-"p15"
  "p14"-"p15"
  "p15"-"p16"
  "p17"-"p18"
  "p17"-"p19"
  "p17"-@(dr,dl)"p20"
  "p19"-"p18"
  "p20"-"p19"
  "p20"-"p18"
  "p18"-"p21"
  "p18"-"p22"
  "p30"-"p31"
  "p30"-"p35"
  "p31"-"p32"
  "p31"-"p35"
  "p32"-"p33"
  "p32"-"p35"
  "p33"-"p35"
  "p34"-"p35"
                }
                  \end{displaymath}
 \caption{Five forbidden graphs in the set {$\cal{F}$} of all forbidden graphs.}
 \label{fig-five-forb}
\end{figure}

   Next we consider the infinite family of graphs $H_k, k \ge 1$, and show that like $K_{1,4}^*$,  each of the graphs in this family is a unit mixed interval graph whose representation is almost completely determined.  The base case of this family is $H_1=K_{1,4}^*$. As an illustration, Figure~\ref{fig-H3} shows a unit mixed interval representation of $H_3$. It is completely determined up to permuting vertex labels, reflecting the entire representation about a vertical line, and specifying whether the right endpoint of $v$ is open or closed.  Lemma~\ref{Hk-unique-lem} shows that the same is true for each graph $H_k$.
 
 \begin{lemma}
 \label{Hk-unique-lem}
 For each $k \ge 1$, the graph $H_k$ shown in Figure~\ref{fig-Hk} is a unit mixed interval graph.  If it is induced in a twin-free graph $G$ then the representation of $H_k$   is completely determined up to permuting vertex labels, reflecting the entire representation about a vertical line and specifying whether the unspecified endpoint of $v$ is open or closed.  Furthermore, for $0 \le j \le k$, the intervals assigned to $a_j$ and $c_j$ have the same endpoints.
 \end{lemma}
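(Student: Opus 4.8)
\emph{Existence.} The first claim, that $H_k$ is a unit mixed interval graph, I would settle by an explicit construction generalizing the representation of $H_3$ in Figure~\ref{fig-H3}. All intervals get length $1$ on a common integer scale and are laid out in $k+1$ consecutive blocks: inside block $j$ the intervals of $a_j$ and $c_j$ occupy the same position (differing only in the open/closed type of their outer endpoint, so that they are not twins), $b_j$ sits at the adjacent position, and $u,v$ are the two ``long-reach'' intervals at the two ends. A direct verification that two intervals meet exactly when the corresponding vertices are adjacent in $H_k$ finishes this part; since all lengths are equal, $H_k$ is a unit mixed interval graph, hence (by Proposition~\ref{mixed-int-prop}) an interval graph.

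\emph{Rigidity.} For the second claim, suppose $H_k$ is induced in a twin-free graph $G$ carrying a unit mixed interval representation $\mathcal I$, and restrict $\mathcal I$ to $V(H_k)$. I would argue by induction on $k$. The base case $k=1$ is $H_1 = K_{1,4}^\ast$, which is exactly the content of the discussion preceding Lemma~\ref{finite-forb}: the induced claw has, up to reflection and relabeling, a single unit mixed representation (an open interval is unavoidable since $K_{1,3}$ is not a unit interval graph), the remaining vertex is then forced into the position of Figure~\ref{fig-K14-star}, and twin-freeness of $G$ forces $a_0$ and $c_0$ to differ precisely in the open/closed type of their outer endpoint, so in particular they receive the same endpoints. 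For the inductive step I would locate the induced copy of $H_{k-1}$ inside $H_k$, apply the inductive hypothesis to pin its intervals down up to the stated symmetries, and then show that the layer-$k$ vertices are forced into place.

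The engine throughout is the following rigidity of unit-length representations of a claw: if $w$ is adjacent to three pairwise non-adjacent vertices, then after possibly reflecting, the three sit at three consecutive unit positions with the middle one sharing its position with $w$; moreover $w$ must be a closed interval, the middle leaf an open interval, and each outer leaf closed at the endpoint it shares with $w$. Consequently all four left endpoints are determined by $L(w)$, and the only remaining freedom is which outer leaf lies on the left together with the open/closed type of the two genuinely outer endpoints. Applying this claw by claw along the structure of $H_k$ fixes every left endpoint into one pattern, after which each remaining open/closed choice at a coincident endpoint is forced by twin-freeness of $G$ (any other choice would give two vertices of $H_k$ the same closed neighbourhood in $G$, the only possible separating vertices lying inside $H_k$ once the pattern is fixed), with the single exception of the outer endpoint of $v$. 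This also yields the last sentence: in the forced representation $I(a_j)$ and $I(c_j)$ occupy the same unit position, so $L(a_j) = L(c_j)$ and $R(a_j) = R(c_j)$, and it is twin-freeness of $G$ that separates them, via the open/closed type at one shared endpoint.

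I expect the main obstacle to be the inductive bookkeeping rather than any single idea: one must check that adjoining layer $k$ cannot ``unlock'' a choice already forced for $H_{k-1}$, and that no unexpected use of a half-open interval lets a layer-$k$ vertex escape the positions dictated by the claw-rigidity analysis. This amounts to a finite but somewhat delicate case analysis on the open/closed types of the coincident endpoints in each block, repeatedly invoking twin-freeness of $G$ to discard degenerate configurations; I expect that case analysis to be the bulk of the argument.
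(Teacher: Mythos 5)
Your overall outline matches the paper's: an explicit block construction for existence, and rigidity by induction on $k$ with base case $H_1=K_{1,4}^\ast$ settled by the discussion preceding Lemma~\ref{finite-forb}. The existence half and the base case are fine (modulo the slip that the vertices pendant at $a_0$ are $b_0$ and $v$, and there is no $b_j$ for $j\ge 1$).

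The genuine problem is the ``engine'' you propose for the inductive step. You plan to apply your claw-rigidity lemma ``claw by claw along the structure of $H_k$,'' but $H_k$ has no induced claw centered at any $a_j$ with $j\ge 1$: the neighborhood of an internal $a_j$ is $\{a_{j-1},c_j,a_{j+1},c_{j+1}\}$, which splits into the two adjacent pairs $a_{j-1}c_j$ and $a_{j+1}c_{j+1}$, so its independence number is $2$. The only claws live at $a_0$. Consequently the step ``the layer-$(k{+}1)$ vertices are forced into place'' is exactly the part your argument does not supply, and the ``delicate case analysis'' you defer to is where the real content sits. The paper's forcing mechanism is different and short: by the inductive hypothesis $I(a_k)$ and $I(c_k)$ have identical endpoints, with $I(a_k)$ closed and $I(c_k)$ open at the outer endpoint $L(a_k)$, so $I(a_k)\setminus I(c_k)=\{L(a_k)\}$; since $a_{k+1}$ and $c_{k+1}$ meet $a_k$ but not $c_k$, each of their intervals must contain $L(a_k)$ and no other point of $I(a_k)$, forcing a closed right endpoint exactly at $L(a_k)$. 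Unit length then gives $L(a_{k+1})=L(c_{k+1})$, and twin-freeness of $G$ forces one left endpoint open and the other closed. This is the missing idea; without it (or an equivalent substitute for the absent claws), your induction does not close.
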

 
 \begin{proof}
A unit mixed interval representation of $H_k$ can be constructed as in Figure~\ref{fig-H3}. To prove uniqueness, we proceed by induction on $k$.  Since $H_1 = K_{1,4}^*$, the result for the base case follows from the discussion preceding Lemma~\ref{finite-forb}.  
 
 Assume our result holds for $H_k$ and consider $H_{k+1}$.  Since $H_k$ is induced in $H_{k+1}$, the representation $\cal I$ of $H_k$ is completely determined up to the conditions in the conclusion.  Without loss of generality, we may assume $I(a_j)$ is closed, $I(c_j)$ is open on the left, and the right endpoint of $I(v)$ is unspecified, as illustrated for $H_3$ in Figure~\ref{fig-H3}.  
 
 The intervals assigned to $a_{k+1}$ and $c_{k+1}$ must intersect $I(a_k)$ but not $I(c_k)$ and thus must each have a closed right endpoint at $L(a_k)$.   The representation is unit, so $L(a_{k+1}) = L(c_{k+1})$.  Since $a_{k+1}$ and $c_{k+1}$ cannot be twins in $G$, one interval must have an open left endpoint and the other a closed one.  This completely determines the representation of $H_{k+1}$ up to the conditions in the conclusion of the lemma.
 \qed
 \end{proof}
 
 \medskip
 
 Figure~\ref{fig-Hk-fam}  shows the first four infinite forbidden families in the set  ${\cal F}$.  In each case, the graph $H_k$ is attached where the arrow indicates, that is, the triangle of ``tail'' vertices $a_{k-1}, a_k, c_k$ in $H_k$ in Figure~\ref{fig-Hk} is superimposed on the same triangle in each graph in Figure~\ref{fig-Hk-fam}.
 
The last  infinite family in the set ${\cal F}$ arises from the interaction of an $H_k$ graph (with tail vertices $a_k$ and $c_k$) with an $H_n$ graph (with tail vertices $a_n'$ and $d_n'$) as shown in Figure~\ref{fig-Hkn-fam}.

  \begin{prop}
  \label{forb-graphs-prop}
  Let   $\cal F$   be composed of the five individual graphs in Figure~\ref{fig-five-forb}  and the five infinite families shown in Figures~\ref{fig-Hk-fam} and \ref{fig-Hkn-fam}.
If $G$ is a twin-free unit mixed interval graph, then $G$ does not contain any of the graphs in $\cal F$ as an induced subgraph.
  \end{prop}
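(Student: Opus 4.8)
The plan is to split $\mathcal{F}$ into its two parts. The five individual graphs of Figure~\ref{fig-five-forb} are excluded immediately by Lemma~\ref{finite-forb}, so all of the work concerns the five infinite families, and each of those I would handle by reducing to the rigidity already proved for $H_k$ in Lemma~\ref{Hk-unique-lem}.

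For each of the four families in Figure~\ref{fig-Hk-fam}, suppose for contradiction that a twin-free unit mixed interval graph $G$ contains one of these graphs $F$ as an induced subgraph. By construction $F$ is obtained by attaching a fixed small gadget to the tail triangle $a_{k-1},a_k,c_k$ of $H_k$, and the gadget vertices are adjacent only to that triangle and to one another, so $H_k$ is an induced subgraph of $F$ and hence of $G$. Lemma~\ref{Hk-unique-lem} then forces the restriction of any unit mixed interval representation of $G$ to the vertices of $H_k$ to be, up to relabeling, reflection about a vertical line, and the openness of one endpoint of $v$, the representation drawn in Figure~\ref{fig-H3}, in which $I(a_j)$ is closed and $I(c_j)$ is open on the left and the two share endpoints for every $j$; I would normalize to this picture. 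Now each gadget vertex is adjacent to a prescribed subset of $\{a_{k-1},a_k,c_k\}$, so its interval must meet exactly the corresponding forced intervals; together with the unit-length constraint this pins down its endpoints up to the open/closed choice, and the prescribed adjacencies among the gadget vertices then pin down those choices as well. The contradiction comes from one of three sources: two gadget vertices are forced to receive intervals with identical endpoints and identical closed neighborhoods in $G$, so they are twins, contradicting twin-freeness; or some endpoint is forced to be simultaneously open and closed; or the gadget together with part of $H_k$ induces one of the graphs of Figure~\ref{fig-five-forb}, contradicting Lemma~\ref{finite-forb}. I expect this endpoint bookkeeping --- recording, for each family, exactly which endpoints coincide and which side of each gadget interval the adjacencies force open --- to be the main obstacle, although each individual case is a short finite check once the $H_k$ picture is fixed.

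The family in Figure~\ref{fig-Hkn-fam} is handled the same way but with two rigid subpictures instead of one. A graph in this family contains an induced $H_k$ with tail $a_k,c_k$ and an induced $H_n$ with tail $a_n',d_n'$, so by Lemma~\ref{Hk-unique-lem} both subrepresentations are determined up to relabeling, reflection, and one open/closed choice, and $I(a_k),I(c_k)$ have common endpoints, as do $I(a_n'),I(d_n')$. The one genuinely new point here is that each of the two copies carries its own reflection ambiguity, so I would first use the edges linking the two tails to rule out all but one compatible pairing of orientations; once the orientations and, via the unit-length constraint, the relative horizontal placement of the two ladders are fixed, the linking edges again force either a pair of twins in $G$ or an over-determined endpoint, completing the proof.
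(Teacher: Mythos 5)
Your proposal is correct and follows essentially the same route as the paper: the five individual graphs are dispatched by Lemma~\ref{finite-forb}, Families 1--4 are handled by invoking the rigidity of the induced $H_k$ from Lemma~\ref{Hk-unique-lem} and observing that the attached gadget vertices are forced into identical unit intervals (hence twins) or an inconsistent endpoint assignment, and Family 5 is handled by combining the two rigid subrepresentations of $H_k$ and $H_n$ and showing the required adjacency pattern among $\{a_k,c_k,a_n',d_n'\}$ cannot be realized. The paper is equally terse about the remaining finite case checks, so your level of detail matches its proof.
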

  
  \begin{proof}
  The result follows for the five graphs in Figure~\ref{fig-five-forb} by Lemma~\ref{finite-forb}.  Next consider the graphs in Families 1 -- 4.  In each case, begin with a representation of $H_k$, which is almost completely determined by Lemma~\ref{Hk-unique-lem}.  Without loss of generality, we may assume the intervals in the representation proceed from right to left, with tail vertices $a_k$ assigned to a closed interval and $c_k$  to one that is open on the left, as illustrated in Figure~\ref{fig-H3} for the case $k=3$.   Now consider a graph in Family 1.    It is impossible to assign unit intervals to vertices $x,y, z$,  each of which intersects $I(a_k)$ but not $I(c_k)$, without using identical intervals and thus creating twins.  This is the same argument we used for graph $B$ of Figure~\ref{fig-B}.  The arguments for Families 2-4 are similar.

  Finally, let $H$ be a unit mixed interval graph in Family 5, where $H$ consists of  $H_k$ with tail vertices $a_k, c_k$ and  $H_n$ with tail vertices $a_n', d_n'$.  Fix a unit mixed interval representation of $H$. 
   By Lemma~\ref{Hk-unique-lem}, the representations of $H_k$  and $H_n$ are almost completely determined and the intervals assigned to $a_k$ and $c_k$ have the same endpoints and the intervals assigned to $a_n'$ and $d_n'$ have the same endpoints.  Now it is impossible to have all pairs of vertices in the set $\{a_k,c_k, a_n', d_n'\}$ adjacent in $H$ except for $c_k, d_n'$. 
Thus $G$ cannot contain any graph in $\cal F$. \qed
  \end{proof}

\begin{figure}

\begin{center}

{\includegraphics[height=1.77in]{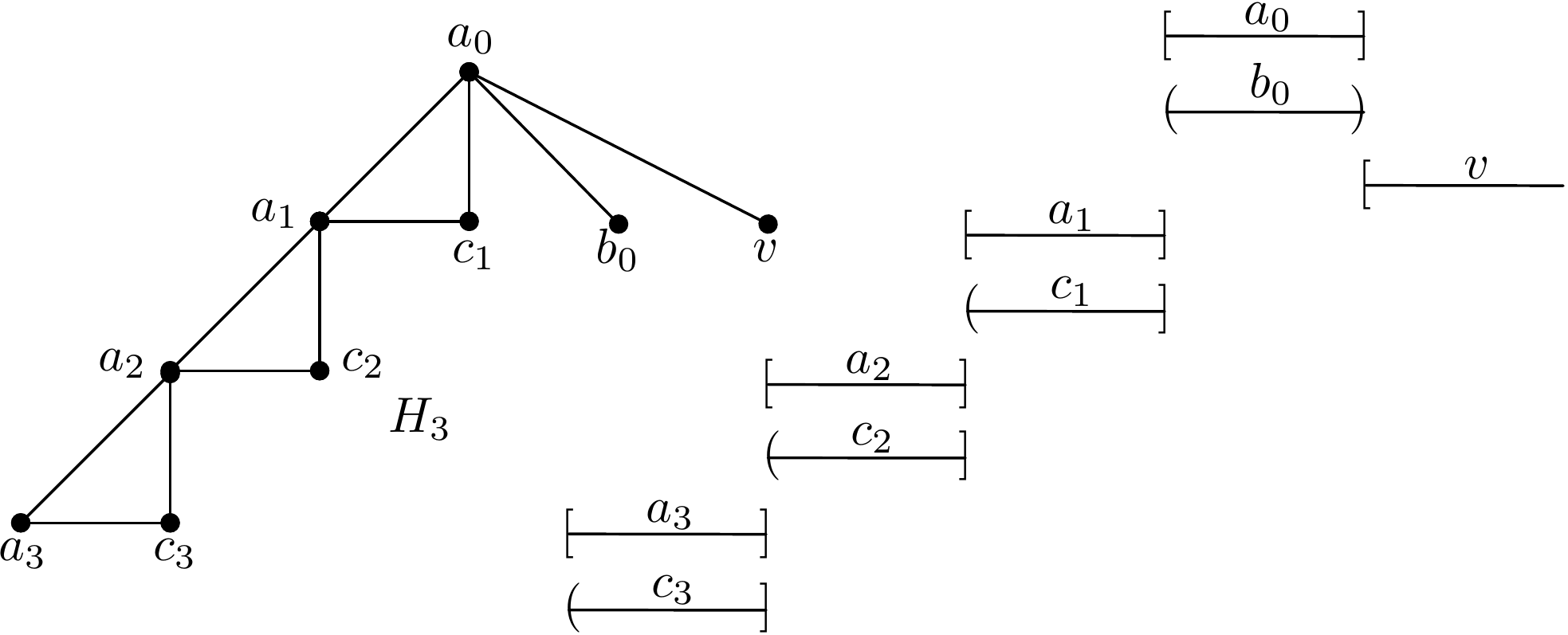}}

\end{center}
\caption{The almost unique representation of $H_3$. }
 \label{fig-H3}
\end{figure}

\begin{figure}

  \begin{center}

{\includegraphics[height=1.96in]{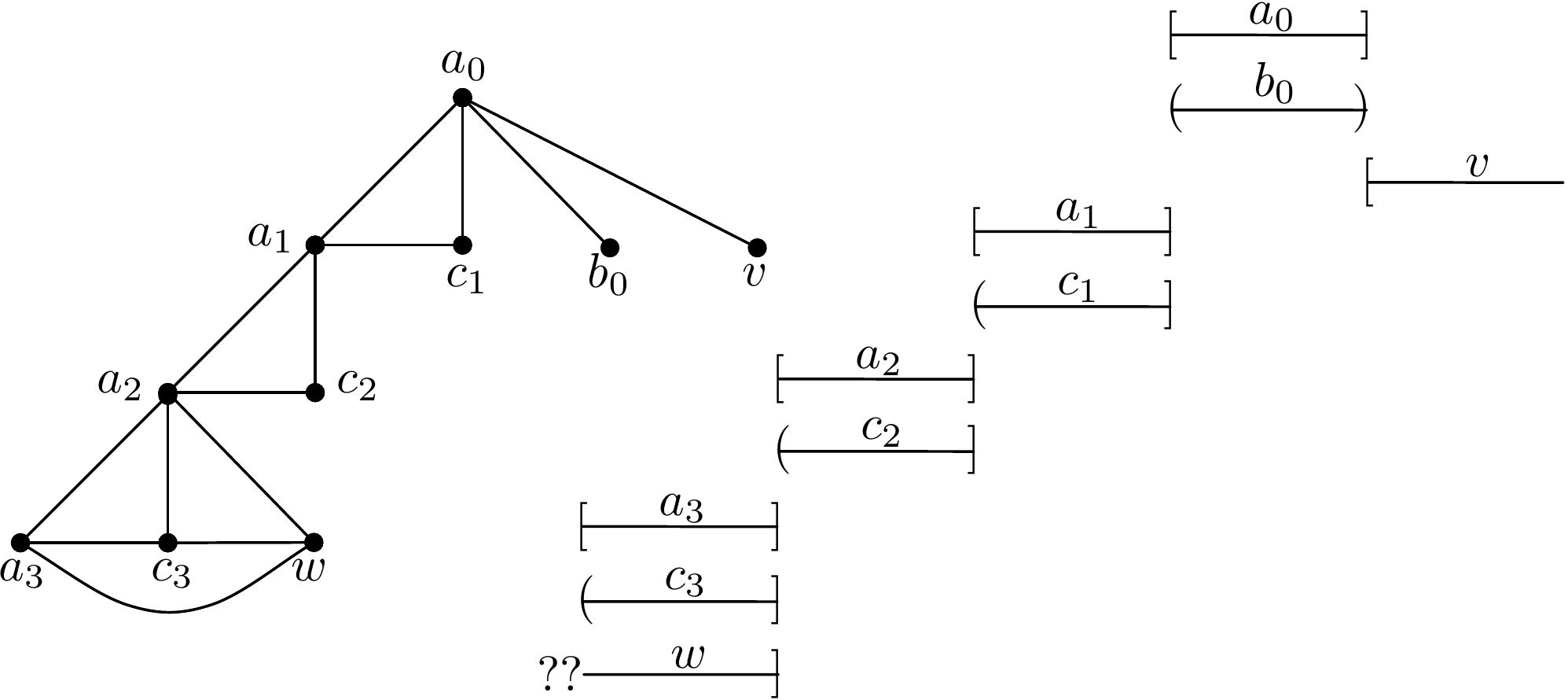}}

  \end{center}
  \caption{The graph from forbidden Family 1 with $k = 2$. }
   \label{fig-fam1}
  \end{figure}
    
 
\begin{figure}

 \centering
  \begin{displaymath}
    \xygraph{
  !{<0cm,0cm>;<1cm,0cm>:<0cm, .7cm>::}
  !{(5,20)}*-{\bullet}="h1"
      !{(5,20.3)}*-{a_0}
  !{(3.5,19)}*-{\bullet}="h2"
      !{(3.2,19.1)}*-{a_1}
  !{(4.5,19)}*-{\bullet}="h3"
        !{(4.6,18.7)}*-{c_1}
  !{(5.5,19)}*-{\bullet}="h4"
        !{(5.5,18.7)}*-{b_0}
  !{(6.5,19)}*-{\bullet}="h5"
          !{(6.5,18.7)}*-{v}
  !{(2.5,18)}*-{\bullet}="h6" 
        !{(2.2,18)}*-{a_2} 
  !{(3.5,18)}*-{\bullet}="h7"
          !{(3.5,17.7)}*-{c_2}
  !{(1.5,17)}*-{\bullet}="h11"
            !{(1,17)}*-{a_{k-1}}
  !{(0.5,16)}*-{\bullet}="h12"
  !{(1.5,16)}*-{\bullet}="h13"
          !{(1.5,15.7)}*-{c_k}
           !{(.5,15.7)}*-{a_k}
  !{(4.5,17.5)}*-{H_k}
  "h11":@{.}"h6"
  "h1"-"h2"
  "h1"-"h3"
  "h1"-"h4"
  "h1"-"h5"
  "h2"-"h3"
  "h2"-"h6"
  "h2"-"h7"
  "h6"-"h7"
  "h11"-"h12"
  "h11"-"h13"
  "h12"-"h13"
  }
  \end{displaymath}

\caption{The infinite family of graphs $H_k, k\ge 1$.}
 \label{fig-Hk}
\end{figure}

\begin{figure}
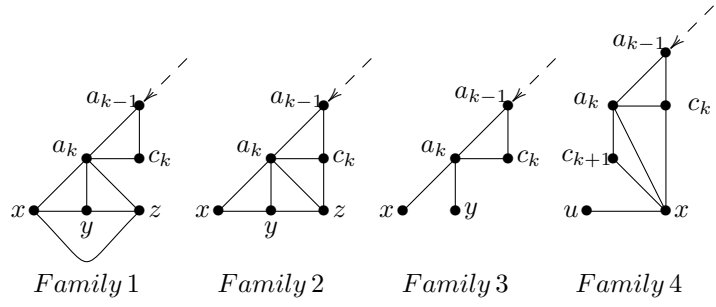


 \centering
  \begin{displaymath}
    \xygraph{
  !{<0cm,0cm>;<.7cm,0cm>:<0cm, .7cm>::}
  !{(4.75,21.25)}*-{}="Hk1"
	!{(4.1, 20.6)}="h14a"
  !{(4,20.5)}*-{\bullet}="h14"
            !{(3.5,20.6)}*-{a_{k-1}}
  !{(3,19.5)}*-{\bullet}="h15"
            !{(2.6,19.7)}*-{a_k}
  !{(4,19.5)}*-{\bullet}="h16"  
            !{(4.4,19.5)}*-{c_k}
  !{(2,18.5)}*-{\bullet}="h17"
            !{(1.7,18.5)}*-{x}
  !{(3,18.5)}*-{\bullet}="h18"
            !{(3,18.15)}*-{y}
  !{(4,18.5)}*-{\bullet}="h19"
            !{(4.3,18.5)}*-{z}
                    !{(3,17.1)}*-{Family \, 1}
   !{(8.25,21.25)}*-{}="Hk2"
	!{(7.6, 20.6)}="h24a"
  !{(5.5,18.5)}*-{\bullet}="h20"
              !{(5.2,18.5)}*-{x}
  !{(6.5,18.5)}*-{\bullet}="h21"
              !{(6.5,18.15)}*-{y}
  !{(7.5,18.5)}*-{\bullet}="h22"
              !{(7.8,18.5)}*-{z}
  !{(7.5,19.5)}*-{\bullet}="h23"
              !{(7.9,19.5)}*-{c_k}
  !{(7.5,20.5)}*-{\bullet}="h24"
              !{(7,20.6)}*-{a_{k-1}}
  !{(6.5,19.5)}*-{\bullet}="h25"
              !{(6.1,19.7)}*-{a_k}
                      !{(6.5,17.1)}*-{Family \, 2}
   !{(11.75,21.25)}*-{}="Hk3"
	!{(11.1, 20.6)}="h26a"
  !{(11,20.5)}*-{\bullet}="h26"  
                !{(10.5,20.7)}*-{a_{k-1}}
  !{(10,19.5)}*-{\bullet}="h27"
              !{(9.6,19.7)}*-{a_k}
  !{(11,19.5)}*-{\bullet}="h28"
                !{(11.4,19.5)}*-{c_k}
  !{(9,18.5)}*-{\bullet}="h29"
              !{(8.7,18.5)}*-{x}
  !{(10,18.5)}*-{\bullet}="h30"
              !{(10.3,18.5)}*-{y}
                      !{(10,17.1)}*-{Family \, 3}
      !{(14.75,22.25)}*-{}="Hk7"
	!{(14.1, 21.6)}="h37a"
  !{(14,21.5)}*-{\bullet}="h37"
    !{(13.5,21.7)}*-{a_{k-1}}
  !{(13,20.5)}*-{\bullet}="h38"
    !{(12.5,20.6)}*-{a_k}
  !{(14,20.5)}*-{\bullet}="h39"
    !{(14.65,20.5)}*-{c_k}
  !{(14,18.5)}*-{\bullet}="h40"
    !{(14.3,18.5)}*-{x}
  !{(13,19.5)}*-{\bullet}="h41"
  !{(12.5,19.5)}*-{c_{k+1}}
  !{(12.5,18.5)}*-{\bullet}="h42"
    !{(12.2,18.5)}*-{u}
        !{(13.3,17.1)}*-{Family \, 4}
     "Hk7":@{-->}"h37a"
    "h37"-"h38"
  "h37"-"h39"
  "h38"-"h39"
  "h38"-"h41"
  "h38"-"h40"
  "h39"-"h40"
  "h40"-"h42"
  "h40"-"h41"
 "Hk1":@{-->}"h14a"
  "h14"-"h15"
  "h14"-"h16"
  "h15"-"h16"
  "h15"-"h17"
  "h15"-"h18"
  "h15"-"h19"
  "h17"-"h18"
  "h18"-"h19"
  "h17"-@(dr,dl)"h19"
   "Hk2":@{-->}"h24a"
  "h20"-"h21"
  "h20"-"h25"
  "h21"-"h25"
  "h21"-"h22"
  "h22"-"h25"
  "h22"-"h23"
  "h23"-"h24"
  "h23"-"h25"
  "h25"-"h24"
  "Hk3":@{-->}"h26a"
  "h26"-"h27"
  "h26"-"h28"
  "h27"-"h28"
  "h29"-"h27"
  "h27"-"h30"
}
  \end{displaymath}

\caption{Four forbidden families formed by attaching $H_k$ as shown.}
 \label{fig-Hk-fam}
\end{figure}

\begin{figure}
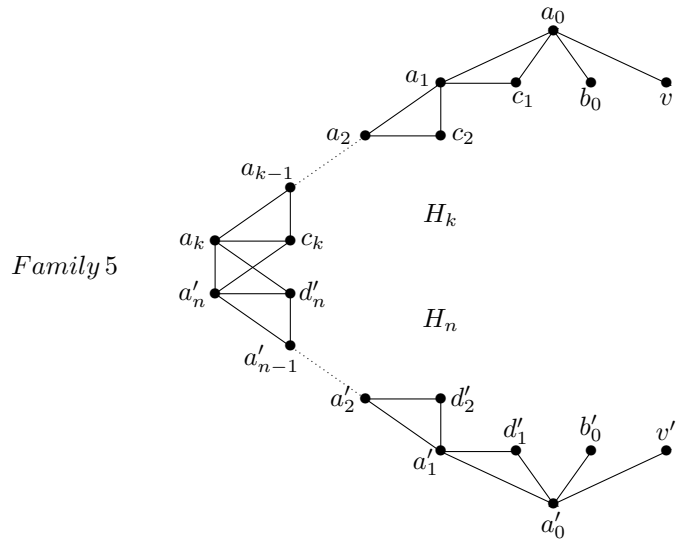


 \centering
  \begin{displaymath}
    \xygraph{
  !{<0cm,0cm>;<1cm,0cm>:<0cm, .7cm>::}
  !{(9,9)}*-{\bullet}="d1"
  !{(9,9.3)}*-{a_0}
  !{(7.5,8)}*-{\bullet}="d2"
  !{(7.2,8.1)}*-{a_1}
  !{(8.5,8)}*-{\bullet}="d3"
  !{(8.6,7.7)}*-{c_1}
  !{(9.5,8)}*-{\bullet}="d4"
  !{(9.5,7.7)}*-{b_0}
  !{(10.5,8)}*-{\bullet}="d5"
  !{(10.5,7.7)}*-{v}
  !{(6.5,7)}*-{\bullet}="d6"
 !{(6.15,7)}*-{a_2}
 !{(7.5,7)}*-{\bullet}="d7"
 !{(7.8,7)}*-{c_2}
  !{(5.5,6)}*-{\bullet}="d11"
  !{(5.2,6.3)}*-{a_{k-1}}
  !{(7.5,5.5)}*-{H_k}
  !{(4.5,5)}*-{\bullet}="d12"
    !{(4.2,5)}*-{a_k}
    !{(2.5, 4.5)}*-{Family \, 5}
  !{(5.5,5)}*-{\bullet}="d13"
  !{(5.8,5)}*-{c_k}
  !{(4.5,4)}*-{\bullet}="d14"
  !{(4.2,4)}*-{a_n'}
 !{(5.5,4)}*-{\bullet}="d15"
 !{(5.8,4)}*-{d_n'}
  !{(5.5,3)}*-{\bullet}="d16"
  !{(5.25,2.75)}*-{a_{n-1}'}
    !{(7.5,3.5)}*-{H_n}
  !{(6.5,2)}*-{\bullet}="d20"
  !{(6.2,2)}*-{a_2'}
  !{(7.5,2)}*-{\bullet}="d21"
  !{(7.8,2)}*-{d_2'}
  !{(7.5,1)}*-{\bullet}="d22"
  !{(7.3,.8)}*-{a_1'}
  !{(8.5,1)}*-{\bullet}="d23"
  !{(8.5,1.4)}*-{d_1'}
  !{(9.5,1)}*-{\bullet}="d24"
  !{(9.5,1.45)}*-{b_0'}
  !{(10.5,1)}*-{\bullet}="d25"
  !{(10.5,1.4)}*-{v'}
 !{(9,0)}*-{\bullet}="d26"
 !{(9,-.4)}*-{a_0'}
   "d1"-"d2"
  "d1"-"d3"
  "d1"-"d4"
  "d1"-"d5"
  "d2"-"d3"
  "d2"-"d6"
  "d2"-"d7"
  "d6":@{.}"d11"
  "d6"-"d7"
  "d11"-"d12"
  "d12"-"d13"
  "d12"-"d14"
  "d13"-"d14"
  "d12"-"d15"
  "d11"-"d13"
  "d14"-"d15"
  "d14"-"d16"
  "d15"-"d16"
  "d20":@{.}"d16"
  "d20"-"d21"
  "d20"-"d22"
  "d21"-"d22"
  "d22"-"d23"
  "d22"-"d26"
  "d23"-"d26"
  "d26"-"d24"
  "d25"-"d26"
}
  \end{displaymath}

\caption{ A fifth forbidden family formed by attaching   $H_k$ to $H_n$.}
 \label{fig-Hkn-fam}
\end{figure}

\section{The Main Theorem}

We are now ready to state our main theorem. We will prove part of it now and the rest will follow from Corollary~\ref{finish-proof}.

\begin{theorem}
 Let $G$ be a twin-free interval graph.  The following are equivalent:

\begin{enumerate}[(1)]  
\item $G$ is a unit mixed interval graph.
  
\item $G$ is a strict mixed interval graph.
  
\item $G$ has no induced graph from the forbidden set $\cal F$ (consisting of the five individual graphs in Figure~\ref{fig-five-forb} and the five infinite families shown in  Figures~\ref{fig-Hk-fam} and \ref{fig-Hkn-fam}).

\end{enumerate}
  
  \label{big-thm}
  \end{theorem}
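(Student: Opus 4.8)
The plan is to prove the equivalence by the cycle $(1)\Rightarrow(2)\Rightarrow(3)\Rightarrow(1)$, establishing the first two implications right here and obtaining the third---the substantive one---from the constructive algorithm that culminates in Corollary~\ref{finish-proof}.

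The implication $(1)\Rightarrow(2)$ is essentially definitional: in a unit representation a containment $I(u)\subseteq I(v)$ between two intervals of equal length forces $L(u)=L(v)$ and $R(u)=R(v)$, so no interval is strictly contained in another and the representation is already strict. (This is precisely the reason strictness was defined.) For $(2)\Rightarrow(3)$ I would argue by contradiction: assume $G$ has a strict mixed interval representation $\cal I$ but contains an induced member of $\cal F$, and re-run the proofs of Lemma~\ref{finite-forb}, Lemma~\ref{Hk-unique-lem}, and Proposition~\ref{forb-graphs-prop}. Every appeal to the unit hypothesis in those proofs is used only to conclude that two intervals already sharing one closed endpoint on one side must share their other endpoint as well---for instance the step ``the representation is unit, so $L(a_{k+1})=L(c_{k+1})$''. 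In a strict representation the same conclusion holds, since if the two intervals agreed only on the shared side, one of them would be strictly contained in the other. With that single substitution, the near-uniqueness of the representation of each $H_k$ and the impossibility of realizing any graph of $\cal F$ both go through unchanged, contradicting the assumption; hence $G$ has no induced graph from $\cal F$. (Equivalently, one could restate Lemma~\ref{Hk-unique-lem} and Proposition~\ref{forb-graphs-prop} with ``strict'' in place of ``unit'', which then subsumes both $(1)\Rightarrow(3)$ and $(2)\Rightarrow(3)$ at once.)

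What remains, and what I expect to be the main obstacle, is $(3)\Rightarrow(1)$: producing a genuine \emph{unit} mixed interval representation of an arbitrary twin-free, $\cal F$-free interval graph. This is deferred to Corollary~\ref{finish-proof} and is the heart of the paper. The strategy there will be to extract from $\cal F$-freeness enough structural information about the interval graph to drive a quadratic-time algorithm that simultaneously (i) fixes a common length and places every interval so that adjacent vertices receive intersecting intervals and non-adjacent vertices disjoint ones, and (ii) makes the open/closed choice at each endpoint---the only remaining freedom once the lengths are fixed---so as to separate intervals that the unit constraint forces to abut. The difficulty, in contrast to the purely local forbidden-subgraph bookkeeping of the easy directions, is that this construction must reconcile adjacency, the rigid unit-length constraint, and the endpoint-type choices globally and all at once; showing that $\cal F$-freeness is exactly what makes this possible, and doing so constructively in quadratic time, is where essentially all of the work lies.
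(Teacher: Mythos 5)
Your decomposition $(1)\Rightarrow(2)\Rightarrow(3)\Rightarrow(1)$ differs from the paper's, which proves $(2)\Rightarrow(1)$, $(1)\Rightarrow(3)$, and $(3)\Rightarrow(2)$. Two of your steps are sound: $(1)\Rightarrow(2)$ is indeed immediate (equal-length nested intervals share endpoints), and your plan for $(2)\Rightarrow(3)$ is workable --- the only role the unit hypothesis plays in Lemma~\ref{Hk-unique-lem} and Proposition~\ref{forb-graphs-prop} is to force two intervals with a common closed right endpoint to share their left endpoint, and strictness yields the same conclusion since otherwise one interval would be strictly contained in the other. This is a legitimate strengthening of the paper's $(1)\Rightarrow(3)$.

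The gap is in how you close the cycle. You assert that Corollary~\ref{finish-proof} will deliver $(3)\Rightarrow(1)$ by an algorithm that ``fixes a common length and places every interval.'' It does not: the sweep algorithm takes a closed representation and only retracts or expands endpoints locally to zone lines, producing a \emph{strict} mixed representation --- that is, it proves $(3)\Rightarrow(2)$, not $(3)\Rightarrow(1)$. With that, your chain gives $(1)\Rightarrow(2)$, $(2)\Rightarrow(3)$, $(3)\Rightarrow(2)$, which never returns to $(1)$: you have nowhere established that a strict mixed representation can be converted into a unit one. That conversion is the paper's actual $(2)\Rightarrow(1)$ argument and it is not trivial: one passes to the closures of the intervals, removes vertices whose closures coincide, observes that the resulting closed representation is proper (this is exactly what strictness buys), applies the Bogart--West ``proper $=$ unit'' procedure, and then restores the removed intervals with their original open/closed endpoint types, using the property that the procedure preserves coincidences $R(u)=L(v)$ of endpoints. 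Your proposal needs this step (or a genuinely different construction that outputs unit intervals directly, which the sweep algorithm does not do); as written, the equivalence is not established.
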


  We begin by proving that (2) $\Longrightarrow (1) $.  The proof that (1) $\Longrightarrow (3)$ follows from Proposition~\ref{forb-graphs-prop}, and we will defer the proof that (3) $\Longrightarrow (2)$ to Section~\ref{Completing}.

Let $G = (V,E)$ be a strict mixed interval graph and fix an ${\cal R}$-representa\-tion ${\cal I}$ of $G$, where $\cal R = {\cal A} \cup {\cal B} \cup {\cal C} \cup {\cal D}$ as in Definition~\ref{R-defn}.  
Take the closure $\overline{I(v)}= [L(v),R(v)]$ of each interval in this representation and remove duplicates, i.e., say two vertices are equivalent if their intervals have the same closure and take one representative from each equivalence class.  
Let $V^\prime \subset V$ be the set of representative vertices and let $E^\prime \subset E$ be the set of edges induced by $V^\prime$.  Let $H = (V^\prime, E^\prime)$.

  The intervals $\overline{I(v)}$ for $ v \in V^\prime$, determine a proper representation of $H$.
 Apply the Bogart-West procedure in \cite{ BoWe99} to this proper representation to obtain a unit representation ${\cal I'}$ of $H$. For each $v \in V^\prime$, let  $I'(v) = [L'(v),R'(v)]$.  As observed  in \cite{RaSz13}, this construction  satisfies
\begin{equation}
\tag{*}
R(u) = L(v)  \hbox{ if and only if } R'(u) = L'(v), \hbox{for all } u, v\in V'.
\end{equation}

We now reinstate the duplicates we removed by extending ${\cal I'}$ to $G$ as follows. For each $v\in V$ take the equivalent representative vertex $w$ and let $I''(v)$ be $I'(w)$, but with endpoints determined by the original status of $I(v)$ as an element of ${\cal A},  {\cal B},   {\cal C},$ or  ${\cal D}$. We must show the extension ${\cal I''}$ is a representation of $G$.

First suppose  $u,v \in V$ are adjacent in $G$.  
In this case, the intervals  $I(u) $ and $ I(v) $ in the original representation ${\cal I}$ intersect, so their closures intersect. The closures coincide with representative intervals in $\overline{\cal I}$ that intersect (whether or not $u, v \in V^\prime$) and thus to edges in $E^\prime$. 
These edges in turn correspond to intervals in ${\cal I'}$ that intersect. When we restore the intervals deleted from ${\cal I}$,  (*) then implies that $I''(u) $ and $ I''(v) $ intersect.

Now suppose $u$ and $v$ are not adjacent in $G$.    Then either $\overline{I(u)}$ and $\overline{I(v)}$   are disjoint or   intersect in one point.  In  the first case    the representative intervals in ${\cal I'}$ are disjoint and so $I''(u), I''(v)$ are disjoint. In  the second case, without loss of generality we may assume $R(u) = L(v)$, so by (*), $R'(u) = L'(v) $ and then again $I''(u)  \cap I''(v) =\emptyset$, as desired.  \qed


\section{Converting an ${\cal F}$-free interval representation to a strict mixed interval representation}

In the remaining sections we prove
  (3) $\Longrightarrow (2)  $  of Theorem~\ref{big-thm}
   using Algorithm~\ref{sweep}, which turns a closed interval representation of a twin-free, {$\cal F$}-free graph $G$  into a strict mixed interval representation of $G$.  In Section~\ref{initial} we describe certain properties that our initial closed interval representation will satisfy and introduce some needed terminology. We present the algorithm in Section~\ref{algorithm}, and then in Section~\ref{Completing} prove it produces the desired mixed interval representation. 
  
\subsection{Properties of our initial closed representation}
\label{initial}

Our next definition is used throughout the rest of the paper and is illustrated in Figure~\ref{fig-peek},  where $x$ peeks into $ab$ and $ac$ from the left and $y$ peeks into $ab$ from the right.

\begin{defn}{\rm Let $G$ be a graph with a mixed interval representation.
For $x,a,b \in V(G)$, we say $I(x)$ (or $x$) \emph{peeks into $ab$} if $I(x)$ intersects $I(a)$ but not $I(b)$.  Furthermore, it \emph{peeks into $ab$ from the left} if in addition  $ R(x) \le L(b)$ and \emph{peeks into $ab$ from the right} 
 if $R(b) \le L(x).$
 }
\end{defn}

Throughout the rest of this paper, we will be working with a graph and a representation of it  that satisfies the following \emph{initial hypothesis}:

\smallskip
\begin{hyp}
\label{hyp}
 $G$ is a twin-free, $\cal F$-free graph with a closed interval representation having distinct endpoints satisfying the following {\bf inclusion property}: for each proper inclusion $I(u) \subset I(v)$ there exist $x,y \in V(G)$ such that $x$ peeks into $vu$ from the left and $y$ peeks into $vu$ from the right. 
\end{hyp}

\begin{prop}
\label{peeker-prop}
Every twin-free interval graph $G$ has a closed interval representation with distinct endpoints satisfying the inclusion property of Hypothesis~\ref{hyp}.

\end{prop}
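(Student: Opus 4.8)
The plan is to start from an arbitrary closed interval representation of the twin-free interval graph $G$ and modify it in two stages: first perturb endpoints to make them distinct, and second, delete ``useless'' inclusions so that the inclusion property holds. For the first stage, I would take any $\cal A$-representation of $G$ and perturb the $2|V(G)|$ endpoints by tiny amounts so that all endpoints become distinct while no intersection or non-intersection among the intervals changes; since $G$ is an interval graph this only requires moving each endpoint by less than the minimum gap/overlap, a standard argument. This also lets me assume no interval is degenerate. So the real content is producing the inclusion property.

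For the inclusion property, the key observation is that if $I(u) \subset I(v)$ is a proper inclusion for which \emph{no} vertex $x$ peeks into $vu$ from the left \emph{or} no vertex $y$ peeks into $vu$ from the right, then we can ``shrink'' $I(v)$ on the offending side without changing the graph. Concretely, suppose $I(u) = [L(u), R(u)] \subset I(v) = [L(v), R(v)]$ with $L(v) < L(u)$ and $R(u) < R(v)$, and suppose no vertex peeks into $vu$ from the left. A vertex $x$ that peeks into $vu$ from the left is one with $I(x) \cap I(v) \ne \emptyset$, $I(x) \cap I(u) = \emptyset$, and $R(x) \le L(u)$; since endpoints are distinct the last condition is $R(x) < L(u)$, and $I(x) \cap I(u) = \emptyset$ is then automatic. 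So ``no left peeker'' means: every vertex $w$ with $R(w) < L(u)$ also has $I(w) \cap I(v) = \emptyset$, i.e. $R(w) < L(v)$ as well. Then I claim we may replace $L(v)$ by any value in the open interval $(\max\{R(w) : R(w) < L(u)\},\, L(u))$ (or just push $L(v)$ up to just below $L(u)$): this does not destroy any edge incident to $v$, because every interval meeting the portion $[L(v), L(u))$ that we are discarding would have to be a left peeker, and there are none; and it does not create any edge, since we only shrank $I(v)$. A symmetric move handles the right side.

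The remaining issue is that these shrinking moves must be organized so that the process terminates and so that fixing one inclusion does not reintroduce a bad inclusion elsewhere. I would handle this with a discrete potential/monovariant: after the perturbation step all endpoints lie in a finite set, and every shrinking move strictly decreases $\sum_{v} (R(v) - L(v))$ while keeping endpoints in (a refinement of) that finite set, or alternatively one can argue by a well-ordering on the multiset of interval lengths. A cleaner bookkeeping route, which I expect is what the paper does: process the proper inclusions in order of the \emph{containing} interval's length, shrinking each $I(v)$ minimally on each side that lacks a peeker; shrinking $I(v)$ can only turn other inclusions $I(v) \subset I(w)$ into non-inclusions (harmless) or create new inclusions $I(u') \subset I(v)$ only with $u'$ already contained in $I(v)$, and one checks the peeker witnesses for those are inherited. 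The main obstacle is precisely this termination-and-non-interference argument — verifying that a single round of shrinking, done in the right order, simultaneously establishes the inclusion property for \emph{all} surviving proper inclusions — while the per-inclusion geometric move itself is routine. After both stages we have the closed representation with distinct endpoints satisfying the inclusion property required by Hypothesis~\ref{hyp}, so $G$ satisfies the conclusion. \qed
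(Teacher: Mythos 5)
There is a genuine gap, and it sits exactly where you placed the ``key observation.'' Your shrinking move as specified does not repair the offending inclusion. If $I(u)\subset I(v)$ has no left peeker and you move $L(v)$ to a point of $\bigl(\max\{R(w):R(w)<L(u)\},\,L(u)\bigr)$, then afterwards you still have $L(v)<L(u)$ and $R(u)<R(v)$, so $I(u)$ is still properly (indeed strictly) contained in $I(v)$, and by your own choice of the new $L(v)$ there is still no vertex with right endpoint in $[L(v),L(u))$, i.e.\ still no left peeker. The pair $(u,v)$ violates the inclusion property before and after the move, so iterating never fixes it. What you actually need is to push $L(v)$ strictly \emph{past} $L(u)$ (to just above it); the same no-peeker-plus-distinct-endpoints argument shows this preserves all adjacencies, and it genuinely destroys the containment. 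Separately, your bookkeeping of how shrinking perturbs the inclusion structure is backwards: shrinking $I(v)$ cannot turn an inclusion $I(v)\subset I(w)$ into a non-inclusion (a smaller set is still contained), and it cannot create a new inclusion $I(u')\subset I(v)$ — what it \emph{can} do is create new inclusions $I(v)\subset I(w)$, which are precisely the ones that then need fresh peeker witnesses or further shrinking of $I(w)$. So the non-interference argument you flag as ``the main obstacle'' is not only missing but is being set up against the wrong list of side effects, and the termination argument is left as a sketch.

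For comparison, the paper avoids all of this by building a representation from scratch rather than repairing one: it orders the maximal cliques $X_1,\dots,X_m$ consecutively via the PQ-tree algorithm and assigns to a vertex living in $X_j,\dots,X_{j+k}$ the interval $[\,j-\tfrac{1}{k+3},\,j+k+\tfrac{1}{k+3}\,]$. Twin-freeness forces distinct clique ranges and hence distinct endpoints, and the peekers come for free: if $\hat I(u)\subsetneq \hat I(v)$ then $v$ lies in maximal cliques strictly to the left and right of $u$'s range, and each such clique, being maximal, contains a vertex outside $u$'s cliques, which is the required peeker. If you want to salvage your repair-based route, you must (i) correct the move so it eliminates the containment, (ii) redo the side-effect analysis for newly created containments $I(v)\subset I(w)$, and (iii) supply the termination/ordering argument; as written, none of the three is in place.
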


\begin{proof}
Given a twin-free interval graph $G$,  use the standard PQ-tree algorithm (see \cite{Go80}) to order the maximal cliques of $G$ as $X_1, X_2, \ldots, X_m$ so that the cliques containing any particular vertex appear consecutively.    Let $v$ be a vertex of $G$ appearing in cliques $X_j, X_{j+1}, \ldots, X_{j+k}$.  The standard method for producing an interval representation of $G$ assigns interval $I(v) = [j,j+k]$ to vertex $v$.  We modify this representation so that vertex $v$ is assigned interval $\hat{I}(v) = [j - \frac{1}{k+3}, j+k + \frac{1}{k+3}]$. Since $G$ is twin-free, the endpoints of the intervals in $\{\hat{I}(v): v \in V(G)\}$ are distinct by construction. 
It is easy to check that $\hat{I}(v)$ properly contains $\hat{I}(u)$ if and only $I(v)$ properly contains $I(u)$ and $I(u), I(v)$ have distinct endpoints.

Now suppose $\hat{I}(v) $ properly contains $\hat{I}(u)$ and let $X_s$ be a maximal clique in $G$ containing $u$ and $v$.  Then by our construction, there exist maximal cliques $X_r$ and $X_t$ each containing $v$ but not $u$ with $r<s$ and $t>s$.    Thus there exist vertices $x \in X_r$ and $y \in X_t$  distinct from $v$ so that $x,y \not\in X_s$.  Then in the original representation $x$ peeks into $vu$ from the left and $y$ peeks into $vu$ from the right.  It follows that  in the modified representation  $x$ peeks into $vu$ from the left and $y$ peeks into $vu$ from the right, as desired. \qed
\end{proof}

\medskip

 \begin{figure}
  \begin{center}
  {\includegraphics[height=0.82in]{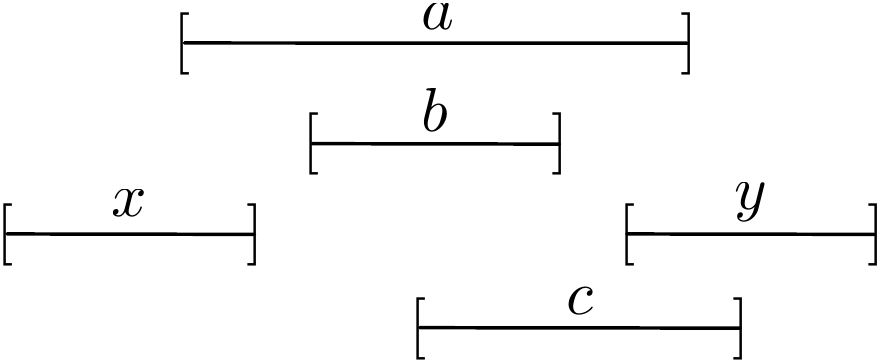}}
  \end{center}
   \caption{$x$ peeks into $ab$ and $ac$ from the left and $y$ peeks into $ab$ from the right.}
   \label{fig-peek}

\end{figure}

In Proposition~\ref{claims},   the intervals in the representation are closed and in their initial positions, before Algorithm~\ref{sweep} is applied.  Later we continue to refer to Proposition~\ref{claims}  for the graph induced by vertices whose  intervals  remain in their initial positions.   
  For closed intervals, strict and proper containment are equivalent, but in mixed interval representations, a containment can be proper without being strict. Proposition \ref{claims} is phrased in terms of strict containment for this reason.    A similar result appears in \cite{RaSz13}.

 \begin{prop}  
\label{claims}  
 If $G$ is a graph  with a closed interval representation satisfying Hypothesis~\ref{hyp} then the following properties hold.
 
 \begin{enumerate}[(1)]

\item No interval can strictly contain two other intervals.

\item No interval  is strictly contained in  two other intervals.

\item If $I(u) \subset I(v)$
then at most two intervals peek into $vu$ from the left and at most two peek into $vu$ from the right.  Furthermore, if two intervals peek in from one side, then neither is contained in the other.

\end{enumerate}

\end{prop}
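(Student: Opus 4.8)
The plan is to prove the three statements in the order (1), (2), (3), each by contradiction, in every case extracting from a hypothetical violation an induced copy of one of the five graphs $K_{1,4}$, $K_{2,3}^{*}$, $K_{2,4}^{*}$, $A$, $B$ of Figure~\ref{fig-five-forb}. Two consequences of Hypothesis~\ref{hyp} are used repeatedly. First, because all endpoints are distinct and all intervals are closed, ``strictly contained'' and ``properly contained'' coincide, so there are no ties to worry about. Second, the inclusion property names, for each nesting $I(u)\subsetneq I(v)$, vertices peeking into $vu$ from the left and from the right; and a short computation shows that if $x$ peeks into $vu$ from the left then $L(v)<R(x)<L(u)$, while once part (1) is available, $x$ must in addition satisfy $L(x)<L(v)$ (otherwise $I(x)\subsetneq I(v)$ and $I(v)$ strictly contains both $I(x)$ and $I(u)$). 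Symmetric statements hold for right-peekers.

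For (1), suppose $I(v)$ strictly contains $I(u_1)$ and $I(u_2)$ with $u_1\ne u_2$. Applying the inclusion property to each nesting produces a left- and a right-peeker for $vu_1$ and for $vu_2$; all four peekers, together with $u_1$ and $u_2$, are neighbours of $v$, while $u_i$ is non-adjacent to the two peekers of its own pair. I would then split on the mutual position of $I(u_1)$ and $I(u_2)$: nested (so $I(u_2)\subsetneq I(u_1)\subsetneq I(v)$, a chain of three intervals), disjoint, or properly overlapping. In each configuration the relative positions of the four peeker endpoints are constrained --- sometimes a peeker of one pair is forced to peek for the other pair, sometimes one must re-apply the inclusion property to a secondary nesting uncovered along the way --- and the aim is to locate an induced $K_{1,4}$, $K_{2,3}^{*}$, $K_{2,4}^{*}$, $A$, or $B$. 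Part (2) is then analogous: if $I(u)\subsetneq I(v_1)$ and $I(u)\subsetneq I(v_2)$ then $v_1\sim v_2$; if $I(v_1),I(v_2)$ are nested the chain of three $I(u)\subsetneq I(v_2)\subsetneq I(v_1)$ already contradicts (1), and they cannot be disjoint, so after possibly relabelling one may assume $L(v_1)<L(v_2)<L(u)<R(u)<R(v_1)<R(v_2)$; a case analysis on which of $v_1,v_2$ each of the four peekers for $v_1u$ and $v_2u$ meets again yields a forbidden induced subgraph.

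Part (3) follows cleanly once (1) and (2) hold. For the counting statement, suppose $x_1,x_2,x_3$ peek into $vu$ from the left. By (1) each satisfies $L(x_i)<L(v)<R(x_i)<L(u)$, so any two of $I(x_1),I(x_2),I(x_3)$ share a point just to the left of $L(v)$; hence $\{v,x_1,x_2,x_3\}$ induces $K_4$. The inclusion property supplies a right-peeker $y$ into $vu$, and since $R(x_i)<L(u)<R(u)<L(y)<R(v)$ we get $y\not\sim x_i$, while $y\sim v$, $y\not\sim u$, $u\sim v$, and $u\not\sim x_i$; thus $\{v,x_1,x_2,x_3,u,y\}$ induces $B$, a contradiction. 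The right side is symmetric under reflecting the representation. For the ``furthermore'', suppose $x,x'$ peek into $vu$ from the left with $I(x)\subsetneq I(x')$; by the previous paragraph $L(x')<L(x)<L(v)$ and $L(v)<R(x)<R(x')<L(u)$. Apply the inclusion property to $I(x)\subsetneq I(x')$ to get a right-peeker $q$ into $x'x$, so $R(x)<L(q)<R(x')<L(u)$; then $q\sim v$ (since $L(v)<R(x)<L(q)<R(v)$) and $L(q)<L(u)$. Finish with a three-way split on $R(q)$: if $R(q)<L(u)$ then $q$ peeks into $vu$ from the left (with $q\notin\{x,x'\}$), giving three left-peekers and contradicting the counting statement; if $L(u)<R(q)<R(v)$ then $I(q)\subsetneq I(v)$ and $I(v)$ strictly contains both $I(u)$ and $I(q)$, contradicting (1); if $R(q)>R(v)$ then $I(u)\subsetneq I(q)$ and $I(u)$ is strictly contained in both $I(v)$ and $I(q)$, contradicting (2).

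The main obstacle is the case analysis in (1) and (2): one must organize all arrangements of the two nested (respectively nesting) intervals and their four forced peekers --- in particular the chain-of-three arrangement and the situations in which a single peeker serves both pairs --- and in each case identify the correct member of Figure~\ref{fig-five-forb}, occasionally invoking the inclusion property a second time on a nesting produced during the argument. By contrast, part (3) is short, using only (1), (2), and the single forbidden graph $B$.
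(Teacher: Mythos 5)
Your part (3) is essentially complete and correct, and it matches the paper's argument: the counting claim comes from the forbidden graph $B$ (the three left-peekers pairwise intersect because each must straddle $L(v)$ by part (1)), and the ``furthermore'' clause follows by applying the inclusion property to the nested pair of peekers and playing the resulting right-peeker off against parts (1) and (2). Your three-way split there is slightly redundant (your first case $R(q)<L(u)$ already gives $I(q)\subset I(v)$ and is subsumed by the appeal to (1)), but it is valid.

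Parts (1) and (2), however, are only announced, not proved, and the announcement hides the one idea that the configuration analysis actually requires. In the case of (1) where $I(u_1)$ and $I(u_2)$ overlap inside $I(w)$ (say $I(u_1)$ leftmost, $I(u_2)$ rightmost), the relevant peekers are $x$ into $wu_1$ from the left and $y$ into $wu_2$ from the right, and the five vertices $w,u_1,u_2,x,y$ induce a path $x$--$u_1$--$u_2$--$y$ plus a dominating vertex $w$. That graph is \emph{not} in Figure~\ref{fig-five-forb} and is in fact a unit mixed interval graph, so no contradiction is available from the peekers alone. The paper closes this case by invoking twin-freeness (part of Hypothesis~\ref{hyp}): since $u_1$ and $u_2$ are not twins there is a vertex $t$ adjacent to exactly one of them, and splitting on whether $t$ is adjacent to $x$ yields $A$ or $K_{1,4}$; the nested subcase $I(u_2)\subset I(u_1)\subset I(w)$ is handled the same way with $t$ a left-peeker into $u_1u_2$. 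Your proposal never mentions twin-freeness in parts (1)--(2), so the plan as stated cannot terminate in the overlapping case. (For (2) your reduction to $L(v_1)<L(v_2)<L(u)$, $R(u)<R(v_1)<R(v_2)$ is the right setup; the paper finishes in one line by taking a left-peeker into $v_2u$ and a right-peeker into $v_1u$ to induce $K_{2,3}^\ast$, which you would still need to exhibit.)
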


\begin{proof}
We will prove Proposition~\ref{claims} only for peeking in from the left and for left endpoints since the results for the right are analogous.

\smallskip

\smallskip

{\bf Proof of (1): }  Suppose there exist two distinct vertices 
  $u\ne v$ of $G$ 
whose intervals are strictly contained in $I(w)$.  
 We will show that every possible configuration of these intervals leads to a contradiction. 

First consider the case of strictly nested intervals:  $I(u) \subset I(v) \subset I(w)$.     
By Hypothesis~\ref{hyp} there exist vertices $x,y,t$ for which $x$ peeks into $wv$ from the left, $y$ peeks into $wv$ from the right, and  $t$ peeks into $vu$ from the left.    If $x$ and $t$ are adjacent, then $w,x,t,v,u,y$ induce  the forbidden 
graph $A$ in $G$, a contradiction since $G$ is ${\cal F}$-free.  
(When we describe forbidden graphs in this way, we generally list the vertices from top to bottom and left to right, corresponding to how they are drawn in the accompanying figures.)
Otherwise, if $x$ and $t$ are  not adjacent, then the vertices $w,x,t,u,y$ induce forbidden graph $K_{1,4}$ in $G$,  also a contradiction. 
Thus the representation of $G$ cannot contain three nested intervals.
 
Since they are not nested, we may suppose without loss of generality that $I(u)$ has the leftmost left endpoint and $I(v) $ has the rightmost right endpoint of all the intervals  strictly contained in $I(w)$. 
By Hypothesis~\ref{hyp} there exist vertices $x,y$ for which $x$ peeks into $wu$ on the left and  $y$  peeks into $wv$ on the right.  If $I(u) \cap I(v) = \emptyset$ then the vertices $w,u,v,x,y$ induce the forbidden graph $K_{1,4}$ in $G$, a contradiction.  Hence $I(u) $ and $ I(v)$ intersect.  
 Since $u$ and $v$ are not twins, 
   there must be a vertex $t$
  adjacent to exactly one of them.
   Without loss of generality, assume $I(t)$ intersects $I(u)$ but not $I(v)$.  
      If $x$ and $t$ are not adjacent, then vertices $w,x,t,v,y$ induce $K_{1,4}$ in $G$, a contradiction.  Thus $x$ and $t$ are  adjacent, but then  $w,x,t,u,v,y$ induce  graph $A$ in $G$, which is also a contradiction. 

\smallskip

{\bf Proof of (2):} Suppose there exist $u,v,w$ so that $I(u)$ is strictly contained in both $I(v)$ and $I(w)$.  Without loss of generality 
 we may assume that $L(w) < L(v) < L(u)$.  By (1), no three intervals can be nested so $R(w) < R(v)$.  Then there exist vertices $x,y$ where $x$ peeks into $vu$ from the left and $y$ peeks into $wu$ from the right.  Now the vertices $w,v,x,u,y$ induce the graph $K_{2,3}^\ast$ in $G$, a contradiction.

\smallskip

{\bf Proof of (3):} 
There exist vertices $x,y$ where $x$
peeks into $vu$ from the left and  $y$ peeks into $vu$ from the right.  If two more vertices $x', x''$ peeked into $vu$ from the left then  $v,x,x',x'', u,y$ would induce the forbidden graph $B$ in $G$, a contradiction.  We get a similar contradiction if three vertices peek in from the right.

 To prove the last sentence, suppose $x,x'$    
  peek into $vu$ from the left and  for a contradiction, assume  $I(x') \subset I(x)$.  By Hypothesis~\ref{hyp}, there exist  vertices $y,t$ for which $y$ peeks into $vu$ and $t$ peeks into $xx'$ from the right.  If $I(t)$ is contained in $I(v)$ this contradicts part (1), since  $I(v)$ also contains $I(u)$.  So $I(u)$ is contained in both $I(t)$ and $I(v)$, contradicting (2).  
$\qed$

\end{proof}

\smallskip
 
Next we give a brief overview of
Algorithm~\ref{sweep},  which will be applied to a graph $G$ satisfying Hypothesis~\ref{hyp}.  
Initially all intervals in the representation are closed and their endpoints are colored white.  The algorithm processes intervals  $I(x)$, may modify them by moving one or both endpoints, and may convert an endpoint from closed to open. When an interval is processed, one or both endpoints change from white to red. In Theorem~\ref{alg-correct}, we show that  once an endpoint turns red it is never moved, allowing us to prove that the algorithm terminates.

In Definition~\ref{ab-pair-def},  we define an $ab$-pair. Algorithm~\ref{sweep}  proceeds in complete sweeps, where we begin with an $ab$-pair and process intervals, sweeping left and then right to form a complete sweep. Starting from a particular $ab$-pair $a_0b_0$,   in Definition~\ref{pairs-def} we recursively define vertices $a_j,b_j,c_j,d_j,a_j',b_j'$, $c_j', d_j'$ for $j \ge 0$ (some of which may not exist).  These vertices correspond to the intervals that the algorithm will process during the sweep. After presenting Definition~\ref{pairs-def},  in Lemma~\ref{abc-properties} we show these quantities are well-defined for the initial representation 
and later,  in Theorem~\ref{alg-correct}, show they are well-defined after one or more sweeps of the algorithm.  These definitions  are illustrated in Figure~\ref{pairs} and will  be used in Lemma~\ref{abc-properties} and to formulate   Algorithm~\ref{sweep}.

\begin{defn}
\label{ab-pair-def}
 {\rm Let $G$ be a graph with a mixed interval representation.  We say that vertices $a$ and $b$ form an \emph{$ab$-pair}   if $I(b)$ is strictly contained in $I(a)$. 
 }
\end{defn}

\begin{defn}
\label{pairs-def}
 {\rm   Let $G$ be a graph initially satisfying Hypothesis~\ref{hyp} whose representation may have been modified by   sweeps $S_1, S_2, \ldots, S_i$ of Algorithm~\ref{sweep}.   Let $a_0b_0$ be an $ab$-pair that will initiate a new  sweep $S_{i+1}$ of Algorithm~\ref{sweep}.  
 \begin{enumerate}

\item[(a)]     If there exists a vertex with   the same closed neighborhood as $a_0$ except for vertices peeking into $a_0b_0$ from the left, call such a vertex $c_0$.  Analogously, if there exists a vertex with   the same closed neighborhood as $a_0$ except for vertices peeking into $a_0b_0$ from the right, call such a vertex $d_0$.

\item[(b)]  
Suppose that, moving to the left from $a_0b_0$, we have defined $a_j, b_j, c_j$, $d_j$ for $0 \le j \le k$,   that $a_j$ exists for each $j$, and that at least one of $b_k$ or $c_k$ exists. We next define  $a_{k+1}, b_{k+1}, c_{k+1}, d_{k+1}$ (some of which may not exist).

\begin{itemize}

\item  If only one vertex peeks into $a_kb_k$ or $a_kc_k$ from the left, 
call it $a_{k+1}$;  if there are two, call them $a_{k+1}$ and $ c_{k+1}$, where $R(a_{k+1}) < R(c_{k+1} )$.   When $c_{k+1}$ exists we say $a_{k+1} c_{k+1}$ is a \emph{leftward $ac$-pair} in the representation.

\item  If there exists  a vertex whose representing   interval  is strictly contained in $I(a_{k+1})$, call it $b_{k+1}$.

\item  If there exists a vertex whose closed neighborhood is the same as that of $a_{k+1}$ except that it is not adjacent to $a_{k}$ or $d_{k}$, then call such a vertex $d_{k+1}$.   
 
\end{itemize}

\item[(c)]      When moving to the right from an initial $ab$-pair $a_0b_0$, we typically depict the vertices with primes. That is, we repeat the definition  in part (b) for $k \ge 1$, replacing each occurrence of  $a_k$ by $a_k^\prime$, $b_k$ by $b_k^\prime$, $c_k$ by $d_k^\prime$, $d_k$ by $c_k^\prime$, ``left'' by ``right'', and ``right'' by ``left".
 In particular, this defines  \emph{rightward $ad$-pairs}.
\end{enumerate}
}
\end{defn}

Figure~\ref{pairs} illustrates how the intervals corresponding to this definition are related to each other. The representation may contain additional intervals that are not shown. For simplicity, we often refer to vertices that satisfy the properties of Definition~\ref{pairs-def}, and to their intervals in a given representation, as being of \emph{type} {\bf a},{\bf b},{\bf c}, or {\bf d}. 
Lemma~\ref{abc-properties} 
lists various properties of these vertices. In these proofs and in the rest of this paper we will usually omit ``the tail of'' when saying that vertices induce the tail of a forbidden graph.

\begin{figure}
\begin{center}
 \includegraphics[height=1.25in]{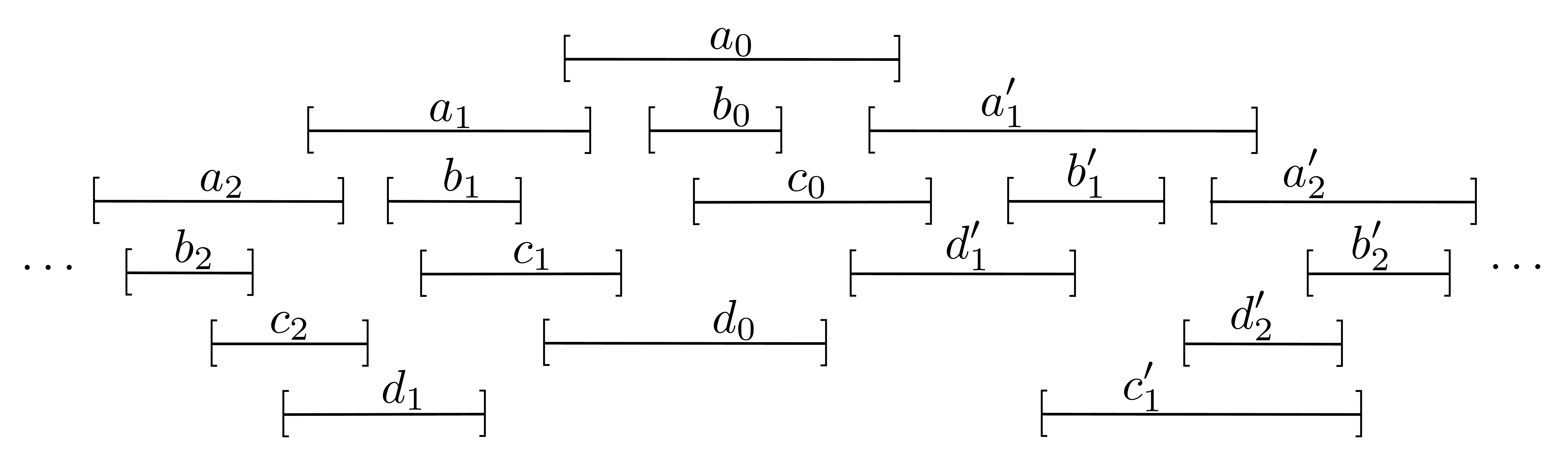}
\end{center}
\caption{Illustrating intervals of types {\bf a}, {\bf b}, {\bf c}, {\bf d}.}
\label{pairs}
 \end{figure}

\medskip

\begin{lemma} \label{abc-properties}
 Let $G$  be  a graph   satisfying Hypothesis~\ref{hyp} whose representation has not been modified by sweeps of Algorithm~\ref{sweep}.
 Fix an $ab$-pair $a_0b_0$. 
 The following properties hold.  
 \begin{enumerate}[(1)]
  \item The quantities in Definition~\ref{pairs-def} are well-defined  if they exist. 
\item  \label{bc-intersect}  If  $a_k b_k$ is an $ab$-pair and $a_kc_k$ is a leftward $ac$-pair, then $I(b_k)$ intersects but is not contained in $I(c_k)$.  
 
\item  \label{ac-ab-peekers} Let $a_kb_k$ be an $ab$-pair and let $a_kc_k$ be a leftward $ac$-pair. Each interval that peeks into $a_kc_k$ from the left also peeks into $a_kb_k$ from the left.

\item  \label{peekers-not-contained} If two intervals peek into an $ab$-pair or a  leftward $ac$-pair   from the left   then neither is contained in the other. 

\item \label{old-AAA}   No vertex in $G$ is adjacent to three  vertices of type $\bf{a}$.

\item  \label{int-c-int-a} If $a_kc_k$ is a leftward  $ac$-pair, then every interval intersecting $I(c_k)$ also intersects $I(a_k)$.

\end{enumerate}

Analogous statements hold for rightward $ad$-pairs.
\end{lemma}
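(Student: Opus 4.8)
The plan is to prove the six items of Lemma~\ref{abc-properties} essentially in the order stated, since each part leans on the ones before it, and throughout to argue (as the paper does elsewhere) by exhibiting a forbidden induced subgraph in $\cal F$ whenever a bad configuration arises. Part (1), well-definedness, is mostly a matter of invoking Proposition~\ref{claims}: the phrase ``well-defined'' means the objects named in Definition~\ref{pairs-def} are unique when they exist, and the only real content is that when two intervals peek into $a_kb_k$ or $a_kc_k$ from the left we may unambiguously name them $a_{k+1}, c_{k+1}$ with $R(a_{k+1}) < R(c_{k+1})$ — which needs distinct endpoints (Hypothesis~\ref{hyp}) — and that Proposition~\ref{claims}(3) forbids a \emph{third} left-peeker and forbids one being contained in the other (the latter also being part~(4) of this lemma). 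One should also check that at most one interval is strictly contained in $I(a_{k+1})$, which is Proposition~\ref{claims}(2) applied from the other side, and that $c_0, d_0$ are unique, which follows because two vertices with the same closed neighborhood would be twins.

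For parts (2) and (3) I would work directly from the definitions. For (\ref{bc-intersect}): $b_k$ is strictly inside $a_k$, and $c_k$ is (by construction as the second left-peeker into $a_{k-1}b_{k-1}$ or $a_{k-1}c_{k-1}$) a unit interval overlapping $a_k$ on the left with $R(a_k)$ strictly between — so $I(b_k)$, sitting strictly inside $I(a_k)$, meets $I(c_k)$ unless $b_k$ is pushed all the way to the right end of $a_k$; if $I(b_k) \cap I(c_k) = \emptyset$ or $I(b_k) \subseteq I(c_k)$ one assembles a forbidden graph — I expect $K_{2,3}^*$ or one of the $H_k$-tail families — using the peekers guaranteed by Hypothesis~\ref{hyp} for the inclusion $I(b_k)\subset I(a_k)$. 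Part (\ref{ac-ab-peekers}) should be a short geometric argument: an interval peeking into $a_kc_k$ from the left misses $I(c_k)$ and meets $I(a_k)$ with right endpoint $\le L(c_k)$; since $L(c_k) < L(b_k)$ would have to be checked, and $I(b_k)\subset I(a_k)$, such an interval cannot reach $I(b_k)$ either, so it peeks into $a_kb_k$ from the left as well — here part (\ref{bc-intersect}) pins down the relative position of $b_k$ and $c_k$ that makes this work.

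Part (\ref{peekers-not-contained}) is really a restatement of the last sentence of Proposition~\ref{claims}(3): if $a_kc_k$ is a leftward $ac$-pair then $I(c_k)\subset I(a_k)$, so Proposition~\ref{claims}(3) applies verbatim; if $a_kb_k$ is the $ab$-pair, likewise. Part (\ref{old-AAA}) is where I expect the main obstacle: a vertex $w$ adjacent to three type-$\bf a$ intervals must be handled across the various ways three $a$'s can sit relative to each other and to the $b$/$c$ vertices that witness their being type $\bf a$, and the natural forbidden subgraph is $K_{1,4}$ (the star on $w$ plus the three $a$'s plus a fourth neighbor) or one of the infinite families $H_k$ when the $a$'s are consecutive in the leftward chain — one needs to be careful that the three $a$'s really do furnish four pairwise-nonadjacent-or-appropriately-structured neighbors of $w$, and to use part~(\ref{ac-ab-peekers}) and the chain structure to locate the extra vertex. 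Finally (\ref{int-c-int-a}): since $c_k$ is a left-peeker into $a_{k-1}b_{k-1}$ (or $a_{k-1}c_{k-1}$) and $a_k$ is the other one with $R(a_k) < R(c_k)$, and by part~(\ref{peekers-not-contained}) neither contains the other, the two unit intervals overlap with $a_k$ entirely to the left of $c_k$; hence $I(c_k) \subseteq I(a_k) \cup (\text{stuff right of } R(a_k))$ fails — rather, anything meeting $I(c_k)$ either meets $I(a_k)$ directly, or lies entirely right of $R(a_k)$, in which case one derives a contradiction (a forbidden configuration with $a_{k-1}$, its $b$ or $c$, and this interval) from the fact that it would then peek into $a_{k-1}a_k$ but not be accounted for. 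The rightward analogues follow by the symmetry built into Definition~\ref{pairs-def}(c), so no separate argument is needed.
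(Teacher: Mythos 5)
Your overall strategy (forbidden-subgraph arguments driven by Hypothesis~\ref{hyp} and Proposition~\ref{claims}) matches the paper's, but two of your key reductions rest on false geometric claims, and the genuinely hard cases are left unaddressed. First, in part (\ref{peekers-not-contained}) you assert that a leftward $ac$-pair satisfies $I(c_k)\subset I(a_k)$, so that Proposition~\ref{claims}(3) applies verbatim. It does not: by Definition~\ref{pairs-def}, $a_k$ and $c_k$ are two overlapping left-peekers with $R(a_k)<R(c_k)$ and (by this very part of the lemma) neither contained in the other, so there is no containment to which Proposition~\ref{claims}(3) could apply. The $ac$-pair case of (\ref{peekers-not-contained}) is in fact the hardest step of the lemma; the paper handles it by taking $k$ minimal admitting a bad pair $I(u)\subset I(v)$ of left-peekers into $a_kc_k$, showing $b_k$ cannot exist, and then building $K_{2,4}^*$ or a Family~4 graph from a right-peeker $y$ into $vu$. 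Second, your ``short geometric argument'' for (\ref{ac-ab-peekers}) needs $L(c_k)<L(b_k)$, but part (\ref{bc-intersect}) forces the opposite: since $R(b_k)<R(a_k)<R(c_k)$, the only way $I(b_k)$ can fail to be contained in $I(c_k)$ is $L(b_k)<L(c_k)$. Hence a left-peeker into $a_kc_k$ can perfectly well meet $I(b_k)$, and that is exactly the configuration the paper eliminates with a Family~2 forbidden graph; no purely geometric argument suffices.

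Beyond these, parts (\ref{old-AAA}) and (\ref{int-c-int-a}) --- which you yourself flag as the main obstacles --- are only gestured at, and your guess that $K_{1,4}$ is the relevant obstruction for (\ref{old-AAA}) cannot work: the three type-{\bf a} vertices met by the offending vertex are consecutive in the chain and adjacent in consecutive pairs, so no induced star arises on the common neighbor. The paper's proofs of (5) and (6) both proceed by a case analysis on whether $b_k$ (resp.\ $b_{k-1}$, $b_{k-2}$) exists, eliminating cases via Proposition~\ref{claims}(2)--(3) and then producing $K_{2,4}^*$, a Family~1 graph, or a Family~4 graph; similarly, part (\ref{bc-intersect}) uses a Family~3 graph rather than the $K_{2,3}^*$ you suggest. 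Identifying these specific forbidden structures is the substance of the lemma, and it is missing from the proposal.
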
 
 
\begin{proof}  We prove the statements for leftward $ac$-pairs.  The proofs of the analogous statements for rightward $ad$-pairs are similar. 

\smallskip

{\bf Proof of (1):   }
     In part (a) there is at most one such $c_0$ because $G$ is twin-free, and similarly there is at most one $d_0$.  In part (b), there are at most two vertices that peek into $a_kb_k$ or $a_kc_k$ from the left for otherwise
  the forbidden graph $B$ or (the tail of) a forbidden graph from Family 1 would be induced in $G$, a contradiction.     If there are two such vertices,   Hypothesis~\ref{hyp} implies that  their endpoints are unequal, so $a_{k+1}$ and $c_{k+1}$ are well-defined.  In addition,      at most
  one   interval  can be strictly contained in $a_{k+1}$ by 
   Proposition~\ref{claims}(1), so $b_{k+1}$ is well-defined.  There is at most one $d_k$ since $G$ is twin-free.  The quantities in part (c) are well-defined analogously.

\smallskip

{\bf Proof of (2):   }
We know  that $I(b_k)$ cannot be contained in $I(c_k)$ by Proposition~\ref{claims}(2).
 Suppose $I(b_k)$ does not intersect $I(c_k)$ and let $y$ be a vertex that peeks into $a_kb_k$ from the left. Then the vertices $a_{k-1},a_k,c_k,y,b_k$ induce in $G$ (the tail of) a graph from Family 3, a contradiction.  \smallskip

\smallskip

{\bf Proof of (3):   }  By Hypothesis~\ref{hyp}, we know there exists a vertex $x$ that peeks into $a_kb_k$ from the left.  
Suppose  there exists a vertex $y$  that peeks into $a_kc_k$ from the left but not into $a_kb_k$. Then $I(y)$ intersects $I(b_k)$, so  $a_{k-1}, a_k,c_k, x, y, b_k$ induce a forbidden graph from Family 2, a contradiction.
\smallskip

{\bf Proof of (4):   }
Suppose $u,v$ peek into  $a_kb_k$ from the left and $I(u) \subset I(v)$.  By Hypothesis~\ref{hyp}, there exists a vertex $y$ peeking into $vu$ from the right.  Then either two intervals are contained in $I(a_k)$ ($I(b_k)$ and $I(y)$) or $I(b_k)$ is contained in two intervals ($I(a_k)$ and $I(y)$).  Both contradict Proposition~\ref{claims}.

Now suppose $k$ is minimum such that there exist $u,v$ peeking into $a_kc_k$ from the left and $I(u) \subset I(v)$.  
If
$b_k$ exists then (3) together with the preceding paragraph gives a contradiction.  So $b_k$ does not exist and, in particular, $k \ge 1$. By the minimality of $k$,  we know $I(a_k) \not\subseteq I(c_k)$ and so $L(a_k) < L(c_k)$.  Again, there exists a vertex $y$ peeking into $vu$ from the right and since $b_k$ does not exist, $I(y) \not\subseteq I(a_k)$.   Thus $I(y)$ intersects $I(c_k)$.

Furthermore, $I(y)$ must intersect
 $I(b_{k-1})$ and/or $I(c_{k-1})$ if they exist, since otherwise the three vertices $a_k,c_k,y$ all peek into $a_{k-1}b_{k-1}$, which induces the forbidden graph $B$ in $G$, and/or peek into $a_{k-1}c_{k-1}$, which induces a forbidden graph from Family 1.  But now if $b_{k-1}$ exists, the vertices $y,a_{k-1},v,c_k,b_{k-1}, z$ induce the forbidden graph $K_{2,4}^*$, where $z$ is a vertex that peeks into $a_{k-1}b_{k-1}$ from the right.  Thus $k \ge 2$ and  $c_{k-1}$ exists.    We must have $R(y) < R(a_{k-1})$ for otherwise $I(y)$ would contain both $I(c_k)$ and $I(a_{k-1})$, violating Proposition~\ref{claims}(1).  However, $R(y) < L(a_{k-2})$, or else the three vertices $y,a_{k-1},c_{k-1}$ all peek into $a_{k-2}b_{k-2}$ or $a_{k-2}c_{k-2}$  and induce $B$ or a forbidden graph from Family 1.  But now $a_{k-2},a_{k-1},c_{k-1},c_k, v, y$ induce in  $G$  a forbidden graph in Family 4, a contradiction.

  \smallskip

{\bf Proof of (5):   }
Assume a vertex $x$ is adjacent to three vertices of type $\bf{a}$. Then its interval $I(x)$ in the given representation must intersect three consecutive intervals of type $\bf{a}$.  If $I(a_0)$ is the middle interval, then $x\ne b_0$ and $I(b_0)$ is contained in both $I(a_0)$ and $I(x)$, contradicting Proposition~\ref{claims}(2).  Thus we may assume without loss of generality that $x$ is adjacent to $a_{k+1}, a_k$, and $a_{k-1}$, where $1 \le k \le n$. Thus $L(x) \le R(a_{k+1})$ and $L(a_{k-1}) \le R(x)$.  If $b_k$ exists then $x \ne b_k$ and $I(b_k)$ is contained in both $I(a_k)$ and $I(x)$, again contradicting Proposition~\ref{claims}(2).   Thus $b_k$ does not exist and $c_k$ must exist.

If $b_{k-1}$ exists then $I(x)$ must intersect $I(b_{k-1})$, or else $x, a_k, c_k$ would peek into  $a_{k-1}b_{k-1}$   from the left, 
contradicting Proposition~\ref{claims}(3).  But now  $x, a_{k-1}, a_{k+1}, c_k, b_{k-1}, a_{k-2}$ induce the forbidden graph $K_{2,4}^\ast$ in $G$, a contradiction (where $a_{-1}$ is interpreted as $a_1'$ if $k=1$).  

Thus $b_{k-1}$ does not exist. Since $b_0$ exists, this implies that $k \ge 2$ and $c_{k-1}$  exists. Hence both $c_k$ and $c_{k-1}$ exist.  If $x$ is not adjacent to $c_{k-1}$, then $a_k, c_k, x$ all peek into $a_{k-1}c_{k-1}$, inducing a forbidden graph from Family 1, a contradiction. So $x$ must be adjacent to $c_{k-1}$, but then $a_{k-2}, a_{k-1}, c_{k-1}, c_k, a_{k+1}, x$ induce  in $G$  a graph in Family 4, 
also a contradiction. 

\smallskip
{\bf Proof of (6):   }
Let $a_kc_k$ be a leftward $ac$-pair.  Suppose there exists a vertex $w$ for which $I(w)$ intersects $I(c_k)$ but not $I(a_k)$.  Thus $L(a_{k-1})  \le R(a_k) < L(w) \le R(c_k)$.
If $b_{k-1}$  exists then either $I(b_{k-1})$ and $I(w)$ are both contained in $I(a_{k-1})$, or $I(b_{k-1})$ is contained in both $I(w)$ and  $I(a_{k-1})$.  Both possibilities contradict Proposition~\ref{claims}.

So $b_{k-1}$  does not exist.
Thus $k \ge 2$ and the vertices $c_{k-1}, a_{k-2}, a_{k-3}$ exist (where $a_1'$ replaces $a_{k-3}$ if $k=2$). Furthermore $I(w) \not\subset I(a_{k-1})$ (or $w$ would be $b_{k-1}$), so $I(w)$ intersects $I(a_{k-2})$.   Let $f_{k-2}$ be $b_{k-2}$ if it exists, and $c_{k-2}$ otherwise.  Now $a_{k-1}$ and $c_{k-1}$ peek into the pair $a_{k-2} f_{k-2}$  so $w$ cannot also peek into this pair,  or this would induce either $B$ or a forbidden graph from Family 1.     Thus $I(w)$ must intersect $I(f_{k-2})$ but  not  $I(a_{k-3})$, by Lemma~\ref{abc-properties}(\ref{old-AAA}).

If $b_{k-2}$ exists, $b_{k-2}=f_{k-2}$, then the vertices $w,a_{k-2},  c_k, c_{k-1}, b_{k-2}, a_{k-3}$ induce the graph $K_{2,4}^\ast$ in $G$, a contradiction.  So $b_{k-2}$  does not exist, $k \ge 3$, and $c_{k-2} = f_{k-2}$ exists. Then the vertices $a_{k-3}, a_{k-2}, c_{k-2}, c_{k-1},$ $c_k, w$ induce  a graph from Family 4 in $G$, a contradiction.   
\qed
\end{proof}

\subsection{The SWEEP algorithm}
 \label{algorithm}
 
In this section we give an overview of Algorithm~\ref{sweep}, and then present the specifics and list the operations  in Tables~\ref{base-step} and \ref{left-k-step}. In Section~\ref{Completing}, we prove the algorithm converts a representation of a graph $G$ satisfying Hypothesis~\ref{hyp} into a strict mixed interval representation of $G$.

As a sweep progresses, starting from an $ab$-pair, vertical zone lines (perpendicular to the intervals) are established in each step of the sweep.  Except for the outermost endpoints of a sweep, endpoints that turn red during a sweep always lie on a zone line of that sweep.   We   maintain the invariant that  after each complete sweep of the algorithm, $xy \in E(G)$ if and only if $I(x) \cap I(y) \neq \emptyset$.

In   the left part of a sweep we identify any vertices $a_k,b_k,c_k,d_k$  that exist, and after the left part is complete their intervals will share the same endpoints. Type ${\bf a}$ intervals will be closed at both endpoints,   type ${\bf b}$ will be open at both endpoints,   type ${\bf c}$ will be open on the left and closed on the right, and    type ${\bf d}$ will be closed on the left and open on the right.  We abbreviate this  by saying that intervals $I(b_k)$,  $I(c_k)$,  and $I(d_k)$ are modified to \emph{match} $I(a_k)$.  Intervals in the right part of a sweep are treated similarly.

\begin{alg}[SWEEP] 
\label{sweep}
{\rm

{\bf Input: } A graph $G$ with a representation satisfying  Hypothesis~\ref{hyp}, with all endpoints  colored white.

\medskip
\noindent
{\bf Output:}  A strict  mixed interval representation of $G$.  

\medskip
\noindent
{\bf One sweep $S_i$ of the algorithm:} Each sweep $S_i$ consists of a base step followed by the steps of the left part of $S_i$ and the right part of $S_i$.

\smallskip
 
{\bf Base step}: Table~\ref{base-step} specifies the operations for the base step of a single sweep.
We identify an $ab$-pair $a_0b_0$, establish zone lines $M_0, M_0^\prime$ at the endpoints of $I(a_0)$, and identify $c_0$ and $d_0$ if they exist.

    The intervals for $b_0,c_0, d_0$ (if they exist) are modified to match $I(a_0)$. 
 We color the endpoints of the intervals for $a_0, b_0, c_0, d_0$ red 

 \smallskip

 \begin{table}
 \begin{center}
  \begin{tabular}{| l | l  | }
  \hline
  Step & Operation  \\ \hline 
[0.1] & \parbox{3.75in}{ Identify an $ab$-pair $a_0b_0$. If none exists, STOP. \\ Otherwise,  establish zone lines $M_0 = L(a_0)$ and  $M_0^\prime = R(a_0)$.    } \\ 
\hline 
  [0.2] & \parbox{3.75in}{Identify $d_0 $ and $c_0$, if they exist.  } \\ 
  \hline
  [0.3] & \parbox{3.75in}{Identify   $a_1, a_1^\prime$ and if they exist, $c_1, d_1^\prime$.$^1$  } \\ 
\hline
 [0.4] & Identify one or both of  $b_1, b_1^\prime$, if they exist.$^2$   \\ \hline
 [0.5] & \parbox{3.75in} {Redefine $I(b_0) := (M_0,M_0^\prime)$.  If $d_0$ exists and $I(d_0)$ has white endpoints, redefine $I(d_0) := [M_0, M_0^\prime)$  and color its endpoints red. If $c_0$ exists and $I(c_0)$ has white endpoints, redefine $I(c_0) := (M_0, M_0^\prime]$\\ and color its endpoints red.} \\ \hline
 [0.6] & \parbox{3.75in}{Color the endpoints of $I(a_0), I(b_0)$ red.
 $^3$}  \\ \hline
 [0.7] & GO TO STEP [1.1].  \\ \hline
  &\parbox{3.75in}{ $^1$$L(a_1) < L(c_1)$ by Lemma~\ref{abc-properties}(\ref{peekers-not-contained})  and Theorem~\ref{alg-correct} parts (1a, 2). Similarly, $R(d_1') < R(a_1')$. }\\
  & $^2$$b_1 \neq c_1$ since $R(b_1) \le R(a_1) < R(c_1)$. Similarly, $b_1 \neq d_1^\prime$.\\
 & $^3 c_1, d_1^\prime$ may not exist. \\ \hline
\end{tabular}
\caption{Base step for one sweep of the SWEEP algorithm.} 
\label{base-step}
\end{center} 
 \end{table}

{\bf Left part of $S_i$, step $k$:} Table~\ref{left-k-step} 
specifies the operations for the $k$-th step of $S_i$. 

After the base step, we sweep to the left.  At the end of step $k-1$, the zone line $M_{k-1}$ has been established,   we have identified the one or two intervals, $I(a_k)$ and perhaps $I(c_k)$, that  
  peek into 
$a_{k-1}b_{k-1}$ from the left (into $a_{k-1}c_{k-1}$ if $b_{k-1}$ doesn't exist). 

First we   redefine $R(a_k)$ to equal $M_{k-1}$ and color it red.
There are three possibilities for what happens next, and two of them result in the end of the left part of the sweep. 

\begin{itemize}

\item If neither $b_k$ nor $c_k$ exists,  we say $a_k$ is a  \emph{terminal {\bf a}}. 

\item If $c_k$ exists and $L(a_k)$ is red from an earlier sweep, we say $a_k$ is a \emph{merging \bolda} and that $c_k$ is a \emph{merging {\bf c}}. In this case, modify $I(c_k)$   to match $I(a_k)$ and color its endpoints red.

If $a_k$ is a terminal or merging {\bf a}, we say the left part of $S_i$ \emph{ends} and we begin the right part of $S_i$. This occurs in step $[k.1]$ of Table~\ref{left-k-step}.

\item Otherwise, we continue step $k$. We color $L(a_k)$ red, establish a zone line $M_k = L(a_k)$, and identify $d_k$ if it exists.
At least one of $b_k, c_k$ exists. A vertex peeks into $a_kb_k$ from the left by Hypothesis~\ref{hyp}, or into $a_kc_k$ from the left by Lemma~\ref{abc-properties}(6) and because $G$ is twin-free. In preparation for step $k+1$ we can now identify $a_{k+1}$ and, if they exist, $c_{k+1}$ and $b_{k+1}$.

 We modify the intervals that exist for $b_k,c_k $  to match $I(a_k)$   and color their endpoints red.   The interval for $d_k$ is moved only if its endpoints are white and if $I(d_{k-1})$ either matches $I(a_{k-1})$ or does not exist.  
 We then continue sweeping left and go to step $[k+1.1]$ of Table~\ref{left-k-step}.
 
 It will help to note that in these steps, any endpoints that are redefined are changed as follows: $R(a_k), L(b_k), L(c_k), R(c_k)$ move to the left while $R(b_k), L(d_k), R(d_k)$ move to the right.

\end{itemize}

\begin{sidewaystable}
\begin{center}
  \begin{tabular}{| l | l  | | l |l |}
  \hline
  Step & Operation & Step & Operation \\ \hline
  [$k$.1] & \parbox{2.9in}{Redefine $I(a_k) := [L(a_k), M_{k-1}]$ and 
  color $R(a_k)$ 
  red. If $a_k$ is a merging {\bf a}, redefine $ I(c_k) := (L(a_k),M_{k-1}]$ and color its endpoints red. 
  If $a_k$ is a terminal  or merging {\bf a}, GO TO STEP [$1.1^\prime$], and begin a right sweep.} 
  &  
  [$k.1^\prime$] & 
  \parbox{2.9in} {Redefine $I(a_k^\prime) := [M_{k-1}^\prime, R(a_k^\prime)]$ and
 color $L(a_k^\prime)$ 
  red. 
  If $a_k^\prime$ is a merging {\bf a}, redefine $I(d_k^\prime) := [M_{k-1}^\prime, R(a_k^\prime))$ and color its endpoints red.
   If $a_k^\prime$ is a terminal  or merging {\bf a}, GO TO STEP $[0.1]$ and begin a new complete sweep.}
  
      \\ \hline 
      [$k$.2] & \parbox{2.9in} {Otherwise, color $L(a_k)$ red, establish zone line $M_k = L(a_k)$, and
 identify  $ d_k$ if it exists.$^1$ \strut}& [$k.2^\prime$] &  
\\ \cline{1-3}
[$k$.3] & \parbox{2.9in}{Identify $a_{k+1}$ and, if they exist,  $c_{k+1}$ and $b_{k+1}$.$^2$}  
& 
[$k.3^\prime$] & 
\\ \cline{1-3}

[$k$.4] &\parbox{2.9in}{ If they exist, redefine $  I(b_k) := (M_k,M_{k-1})$, $I(c_k) := (L(a_k),M_{k-1}]$ and color their endpoints red.$^3$ \strut} & 
[$k.4^\prime]$ & 
\parbox{2.9in}{In [$k.2$]-[$k.5$], replace `left' by `right' and vice versa; replace $a_k, b_k$ by $a_k^\prime, b_k^\prime$ ;  $c_k$ by $d_k^\prime$, $d_k$ by $c_k^\prime$;  $M_k$ by $M_k^\prime$.}
\\ \cline{1-3}

[$k$.5] & \parbox{2.9in}{ If $d_k$ exists and   $I(d_k)$ has white endpoints, and either  $d_{k-1}$ does not exist or $I(d_{k-1}) = [M_{k-1},M_{k-2})$,  redefine $ I(d_k) := [M_k,M_{k-1})$  \& color its endpoints red.$^4$}& 
 [$k.5^\prime$] & \\ \hline

[$k$.6] &  \parbox{2.9in}{
GO TO STEP [$k+1$.1].\strut} &
 [$k.6^\prime$]& 
 \parbox{2.9in}{
 GO TO STEP [$k+1.1^\prime$].} \\ \hline

 & \parbox{2.9in}{$^1$There is at most one choice for $d_k$ since $G$ is twin-free.}& &\\
&\parbox{2.9in}{$^2$ $L(a_{k+1}) < L(c_{k+1})$ by Lemma~\ref{abc-properties}(\ref{peekers-not-contained})  and Theorem~\ref{alg-correct} parts (1a, 2). Also $b_{k+1} \neq c_{k+1}$ since  $R(b_{k+1}) \le R(a_{k+1}) < R(a_{c+1}).$} & &\\
&$^3$At least one exists since $a_k$ is not a terminal {\bf a}. & &\\
&$^4$If $k=1$, interpret $M_{k-2}$ as $M_0^\prime$.&
& \\
\hline
 \end{tabular}
\caption{$k$-th step of the left and right parts of a complete sweep, $k\ge 1$. 
} 
\label{left-k-step}
\end{center} 
 \end{sidewaystable}

 \medskip
 \noindent {\bf Right part of $S_i$}:
  Analogously, in the right part of $S_i$ the roles of vertices  of types {\bf c, d} are reversed. 
  When the right part terminates in a step of the form $[k.1^\prime]$, the sweep begun with $a_0b_0$ is complete. In the right part of a sweep, $L(a^\prime_k), R(b^\prime_k), L(d^\prime_k),R(d^\prime_k)$ move to the right and $L(b^\prime_k), L(c^\prime_k), R(c^\prime_k)$ move to the left.
 
 \medskip  
 \noindent {\bf Termination}:   Each complete sweep of the algorithm reduces the number of strict inclusions  among the intervals. After  a complete sweep $S_i$, if any strict inclusions remain we identify a new $ab$-pair and begin a new sweep $S_{i+1}$. When no strict inclusions remain the algorithm terminates, and it will follow from Theorem~\ref{alg-correct}(4) that we have produced a strict mixed interval representation of $G$.

}
\end{alg}

  \subsection{Proof of Correctness of the SWEEP algorithm}
  \label{Completing}

   We now prove that  Algorithm~\ref{sweep} correctly produces a strict interval representation of the input graph $G$. We carry out the proof by induction on the number of complete sweeps of the algorithm. We will   
prove that once the endpoints of an interval are colored red, the interval remains unchanged until the algorithm terminates. Also, if the endpoints of one or more intervals are white after a sweep then these intervals have not changed from the  initial closed representation of $G$ given in Hypothesis~\ref{hyp}. Therefore they satisfy Proposition~\ref{claims} and Lemma~\ref{abc-properties}, and cannot induce in $G$ a graph in the forbidden set ${\cal F}$.   
  
\begin{defn}
\label{participates} {\rm  An endpoint of an interval $I(v)$ \emph{participates in sweep $S$} if $v$ is identified as being of type {\bf a}, {\bf b}, {\bf c}, or {\bf d} in $S$ and after any modification during $S$, the endpoint lies on a zone line established in $S$.    In addition, left endpoints of merging {\bf c}'s and right endpoints of merging {\bf d}'s are also said to participate in  $S$.  
A vertex $v$ \emph{participates in sweep} $S$  if at least one   endpoint of $I(v)$ participates in $S$.

}
\end{defn}

During the left part of the sweep, for vertices identified as a terminal {\bf a} or a merging  {\bf a}, only   right endpoints participate, and for vertices identified as type {\bf d}, either both endpoints or neither endpoint participates  (see [k.5] in Table~\ref{left-k-step}).    For other vertices identified as being of type {\bf a} and all vertices   identified as being of type {\bf b} or   {\bf c}, both endpoints participate.

 The next remark, which we use repeatedly in the proof of Theorem~\ref {alg-correct}, follows from   the construction given in Algorithm~\ref{sweep}.  
   
  \begin{remark} 
  \label{alg-rem}

{\rm  (a) At any point in the algorithm, if an endpoint of $I(v)$ is white then the endpoint is in its initial position and is closed. 

(b) After any number of complete sweeps, if $I(v)$ has one red and one white endpoint then $v$ was a terminal {\bf a} in the sweep  that turned the endpoint red. }   \end{remark}
  
  Parts (2) and (4) of the next theorem are crucial to proving our algorithm terminates and that the graph represented by the intervals doesn't change.     The other parts are needed in the induction.

\begin{theorem}
\label{alg-correct}
       Let the input to Algorithm~\ref{sweep} be a graph $G$ with a representation  satisfying Hypothesis~\ref{hyp}, and let $S_i$ be the $i$th sweep of the algorithm.   For each $i \ge 1$,
\begin{enumerate}
\item[(1)] \label{claim-ab}
Suppose $I(u)$ is strictly contained in $I(v)$ at the beginning of $S_{i}$.
\begin{enumerate}
\item[(a)] \label{strict-incl}
All four endpoints of these intervals are white.
\item[(b)] \label{ab-incl-lem}
If  $x$ peeks into $vu$ from the left, then  $R(x)$ is white.   If $x$ peeks into $vu$ from the right then $L(x)$ is white.  
\end{enumerate}
\item[(2)] 
 \label{well-def}  The quantities defined in Definition~\ref{pairs-def} and the operations specified for sweep $S_i$ in Tables~\ref{base-step} and \ref{left-k-step}  
are well-defined. 

Each endpoint participating in sweep  $S_i$  is white at the beginning of $S_i$ and red at the end of $S_i$.
In particular, if an endpoint of $v$ was colored red before $S_i$ then it is not moved during $S_i$.

\item[(3)]   At the end of  $S_i$, the following are true for each zone line $M$  established during $S_j$,   $1 \le j \le i$:
  
  \begin{enumerate}[(a)]
  \item  Any endpoint on $M$ is red.
  \item There exist vertices  $a, \hat{a}$ of type {\bf a} participating in $S_j$ with $R(a) = L(\hat{a})  = M$.
    
 \item    Each interval with an endpoint on $M$ shares the same endpoints as an interval of type {\bf a} whose vertex participates in  $S_j$.  
  
  \item If $M$ is established in the left part of $S_j$ there exists an interval of type {\bf b}  or {\bf c}  whose left   endpoint is on $M$ and is open.  An analogous statement holds for the right part of $S_j$.
  
  \item  Each interval with an endpoint on $M$ either participates in $S_j$ or, for some sweep $S_\ell, j < \ell \le i$, is a merging {\bf c}({\bf d}) whose left (right) endpoint is moved to $M$ during $S_\ell$.

        \end{enumerate}
 
\item[(4)]  At the end of $S_i$, the graph represented by the intervals is $G$.

  \end{enumerate}

\end{theorem}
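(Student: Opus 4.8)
The plan is to prove all of (1)--(4) simultaneously by strong induction on the sweep index $i$, with a secondary induction on the step number $k$ inside the left part (and, symmetrically, the right part) of a single sweep $S_i$. For the base case $i=1$ every endpoint is white, so the intervals are exactly those supplied by Hypothesis~\ref{hyp}; hence Proposition~\ref{claims} and Lemma~\ref{abc-properties} apply verbatim, part (1) is immediate, the well-definedness asserted in (2) is Lemma~\ref{abc-properties}(1), and one checks (2)--(4) by reading off the effect of the base step in Table~\ref{base-step} and of step $k$ in Table~\ref{left-k-step}. All the content is in the inductive step, which I would organize so that part (1) for $S_i$ is deduced first from the inductive hypothesis, and then (2), (3), (4) for $S_i$ are obtained by walking through the algorithm.

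Assume (1)--(4) hold for $S_1,\dots,S_{i-1}$. To get (1) for $S_i$, note that by Remark~\ref{alg-rem}(a) any interval with a white endpoint is still in its initial position, so the subgraph induced by the vertices all of whose endpoints are white satisfies Hypothesis~\ref{hyp} and therefore Proposition~\ref{claims} and Lemma~\ref{abc-properties}. For (1a): by part (3) of the inductive hypothesis every red endpoint lies on a zone line $M$ of some earlier sweep, every interval meeting $M$ shares its endpoints with a type \textbf{a} interval, and (3b) furnishes type \textbf{a} intervals with endpoints exactly at $M$; it follows that two comparable ``finalized'' (all red) intervals have identical endpoints, and that a finalized interval cannot be strictly comparable with an interval in its initial position without the latter crossing a zone line (and so having been finalized). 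Hence a strict inclusion at the start of $S_i$ involves four white endpoints. Part (1b) is analogous: were $R(x)$ red it would sit on a zone line $M$ carrying, by (3b), a type \textbf{a} vertex $\hat a$ with $L(\hat a)=M=R(x)\le L(u)$, and, combined with the now-all-white strictly nested pair $I(u)\subset I(v)$, this produces a configuration forbidden by Proposition~\ref{claims}.

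Next I would prove (2) for $S_i$ step by step. The pair $a_0b_0$ is an $ab$-pair, so by (1a) the endpoints of $I(a_0)$ and $I(b_0)$ are white and $M_0=L(a_0)$, $M_0'=R(a_0)$ are placed at white positions; by (1b) the right endpoint of every left-peeker (in particular $R(a_1)$) and the left endpoint of every right-peeker are white. Uniqueness of $c_0,d_0$ and of each $d_k$ is twin-freeness; uniqueness of $b_{k+1}$ is Proposition~\ref{claims}(1) on the initial-position subgraph; the ``at most two peekers'' bound is the forbidden-graph argument with $B$ and Family~1, legitimate because part (4) of the inductive hypothesis says the intervals still represent $G$. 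The delicate point is the claim that the endpoints participating in $S_i$ are exactly those passing from white to red; here one must check that each newly established zone line $M_k=L(a_k)$ sits at an endpoint that is white when step $k$ begins. If $L(a_k)$ were red while $R(a_k)$ is about to be coloured red in step $[k.1]$, then by Remark~\ref{alg-rem}(b) $a_k$ was a terminal \textbf{a} in some earlier sweep, which colours the \emph{right} endpoint and leaves the left white---a contradiction; the merging \textbf{a} case is consistent because $[k.1]$ only reads the colour of $L(a_k)$ and does not move it. One then tracks the redefinitions: $R(a_k),L(b_k),L(c_k),R(c_k)$ move strictly left and $R(b_k),L(d_k),R(d_k)$ move right ($d_k$ only when white and the stated condition on $d_{k-1}$ holds), so no red endpoint is ever disturbed. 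I expect this interlocking of (1b), (2), and (3)---in particular establishing that no operation of $S_i$ ever tries to move a red endpoint while the new zone lines land exactly at white endpoints---to be the main obstacle; it requires marrying Remark~\ref{alg-rem}(b), the inductive hypothesis (3a), and a case split over terminal, merging, and continuing vertices of type \textbf{a}.

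Parts (3) and (4) then follow by bookkeeping. For (3): on the zone line $M_k=L(a_k)$ take $\hat a=a_k$ and $a=a_{k+1}$ (whose right endpoint is moved to $M_k$) for (3b); the intervals with an endpoint on $M_k$ are $a_{k+1}$, the type \textbf{b}, \textbf{c}, \textbf{d} intervals matched to $a_k$, and---in a later sweep---a merging \textbf{c} whose left endpoint is moved there, which yields (3a,c,e), while (3d) is witnessed by $b_k$ if it exists and by $c_k$ otherwise, one of which exists because $a_k$ is not a terminal \textbf{a}; the zone lines of earlier sweeps are untouched by $S_i$ except for the single merging \textbf{c} (or \textbf{d}) clause already present in (3e). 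For (4) one verifies that every redefinition of $S_i$ preserves intersections and non-intersections: adjacencies survive because the matched intervals still contain the sub-segment around the relevant zone lines and the type \textbf{b}/\textbf{c}/\textbf{d} neighbourhoods are confined between consecutive zone lines, while non-adjacencies survive by the open/closed parities---for example $I(a_{k+1})$ is closed at $R(a_{k+1})=M_k=L(a_k)$ whereas $I(b_k)=(M_k,M_{k-1})$ is open at $M_k$, so they remain disjoint, and similarly for the other type pairs. This last check is routine once (2) is in place.
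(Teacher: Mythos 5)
Your overall skeleton (simultaneous induction on the sweep index, proving (1) first and then (2)--(4) by walking through the algorithm) matches the paper's, but the two places you declare easy are exactly where the real work lives, and your arguments there do not go through. For (1a), the claim that two comparable finalized intervals must have identical endpoints, and that a finalized interval cannot be strictly comparable with one still in its initial position without the latter crossing a zone line, does not follow from inductive hypothesis (3). The problematic configuration is a terminal \textbf{a}: such an interval has one red endpoint on the outermost zone line of some sweep and one endpoint still \emph{white}, and that white endpoint can later be retracted in the opposite part of a different sweep. A strict inclusion at the start of $S_i$ can therefore involve endpoints that turned red in different sweeps, or a half-finalized terminal \textbf{a} against a fully white interval lying entirely to one side of all zone lines --- no crossing of two zone lines is forced, so Lemma~\ref{abc-properties}(5) is not available. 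Ruling these configurations out is the bulk of the paper's proof: it identifies the first sweep that colours an endpoint, splits into cases according to whether $u$, $v$, or both participate there, and tracks the subsequent motion of the surviving white endpoints through later sweeps until a forbidden subgraph ($K_{2,4}^*$, $B$, or a member of Families 1--5, including the two-sweep interaction that produces Family 5) appears. None of that is in your sketch, and I do not see a shortcut, since hypothesis (3) only constrains endpoints that lie on zone lines.

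Part (4) is likewise not ``routine once (2) is in place.'' Type \textbf{b} intervals are \emph{expanded} and \textbf{d} (and merging \textbf{c}) intervals are translated, so one must show that no non-participating interval is newly met or newly missed. The paper needs three dedicated lemmas plus the notion of an NTM-\textbf{d} vertex for this: one showing that any $x \neq a_k$ peeking into $a_{k+1}b_{k+1}$ from the right must equal $d_k$ (so the only interval the expanded $b_{k+1}$ could newly reach is itself scheduled to move), one showing that a merging sweep, had it been run earlier, would have processed exactly the intervals the earlier sweep processed, and one showing that the chain of needs-to-move \textbf{d}'s still has white endpoints so the algorithm is actually permitted to move them. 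Your open/closed-parity observation handles non-adjacency across a single zone line but says nothing about these cases. A smaller gap of the same flavour sits inside your part (2): whiteness of $L(c_{k+1})$, and of $R(x)$ for peekers into $a_kc_k$ when $b_k$ does not exist, again requires forbidden-subgraph arguments (Families 1, 4 and 5, the last arising from the interaction of two sweeps), not just Remark~\ref{alg-rem}(b); and your (1b) should conclude by contradicting the freshly proved (1a) rather than Proposition~\ref{claims}, since the type \textbf{a} interval on the zone line is no longer in its initial position.
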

  
\noindent{\bf Proof of Theorem~\ref{alg-correct}.} The proof is by induction on $i$.   We begin with the base case $i = 1$.  In Lemma~\ref{abc-properties}(1), we   showed that the quantities $a_j,b_j,c_j,d_j$ are well-defined before any sweeps have been performed, establishing  the first sentence of (2)  in the base case. 
The remaining part  of (2) and statements    (1), (3) and (4)  are either trivially true or follow easily from the construction in the algorithm.

Now suppose statements (1)-(4) are  true for       $ i:  1 \le i \le r$. We must prove they are  true for   $i = r+1$, and we call $ S_{r+1}$,   the \emph{current sweep}.

 \smallskip

\medskip
 
 \noindent
{\bf Proof of (1a).}
  \  Suppose, to the contrary, that at least one endpoint of $I(u)$ or of $I(v)$ is red at the beginning of $S_{r+1}$. Let $S_i$ be the first sweep in which an endpoint turned red, where $1 \le i \le r$.  

\smallskip

\noindent \emph{Case 1.}  Suppose at least one endpoint of $I(u)$ and at least one of $I(v)$ turn red during $S_i$.   

First, suppose both endpoints of $I(v)$ and both endpoints of $I(u)$  turn red during $S_i$. Then $I(u)$ is not strictly contained in $I(v)$ at the end of $S_i$, since the algorithm never creates strict inclusions during a given sweep. Furthermore, by the induction assumption red endpoints are not moved by the algorithm between the end of  $S_i$ and the beginning of $S_{r+1}$, so $I(u)$ is not strictly contained in $I(v)$ at the beginning of $S_{r+1}$, a contradiction.

Next, suppose both endpoints of $I(v)$  and exactly one endpoint of $I(u)$ turn  red during $S_i$.   Without loss of generality, suppose $R(u)$ turns red and $L(u)$ remains white during $S_i$.  By the induction hypothesis (2) and Remark~\ref{alg-rem}, $u$ is  a terminal {\bf a} for the left part of $S_i$,   and after $S_i$ the right endpoint $R(u)$ lies on  the leftmost zone line $M$  of $S_i$. Since $v$ participates in $S_i$ and is not a terminal ${\bf a}$,  at the end of $S_i$ both endpoints of $I(v)$ lie on zone lines of $S_i$.   By the induction hypothesis (2), the positions of $L(v)$, $R(v)$  and $R(u)$ do not change between the end of $S_i$ and the beginning of $S_{r+1}$, so at the end of $S_i$, $L(v) = R(u) = M$.    In order to have $I(u) \subseteq I(v)$ at the beginning of $S_{r+1}$, the left endpoint   $L(u)$ is moved to $M$ during the right part of some sweep $S_{\ell}$ with $i < \ell \le r$.  Thus $u$ is a terminal or merging ${\bf a}$ for the right part of $S_{\ell}$.  So for $S_{\ell}$,  we have $u = a_k'$ for some $k$
   and there exists $a_{k-1}'$ with $R(a_{k-1}') = M$.   By (2),  $R(a_{k-1}')$ is white at the beginning of $S_{\ell}$ and thus also at the end of $S_i$ but 
lies on
    a  zone line of $S_i$,
contradicting (3a).

Thus,  exactly one endpoint of $I(v)$ turns red during $S_i$.
Without loss of generality, suppose $R(v)$ turns red and $L(v)$ remains white during $S_i$. By the induction hypothesis (2) and Remark~\ref{alg-rem}, $v$ is  a terminal {\bf a} for the left part of $S_i$ and after $S_i$, its right endpoint $R(v)$ lies on the leftmost zone line $M$ of $S_i$. 
Since $u$ participates in  $S_i$, is adjacent to $v$,  and is not a terminal ${\bf a}$, after $S_i$ both endpoints of $I(u)$ lie on zone lines of $S_i$.   By the induction hypothesis (2), the endpoints of $I(u)$ do not change position between the end of $S_i$ and the beginning of $S_{r+1}$ when $I(u) \subseteq I(v)$, so at the end of $S_i$, $L(u) = R(u) = M$.  Also by the induction hypothesis (3c), $I(u)$ has the same endpoints after $S_i$ as some $I(a_j)$, where $a_j$ participates in $S_i$.     Thus $|I(a_j)| = 0$ after $S_i$.  However, $L(a_j) < L(a_{j-1})$ 
initially, before any sweeps, by Hypothesis~\ref{hyp}. So during $S_i$, $L(a_j)$ is unchanged  and $R(a_j)$ is retracted (to the left) to $L(a_{j-1})$ .  Thus $|I(a_j)| > 0$ after $S_i$, a contradiction.

\smallskip

\noindent
\emph{Case 2.}  Suppose both endpoints of $I(u)$ are white at the end of $S_i$. 

Without loss of generality assume $R(v)$ turned red in $S_i$, lying on a zone line $M$ established during $S_i$.

First consider the case where $I(u) \subseteq I(v)$ at the end of $S_i$.  
By the construction in the algorithm,  there is a vertex $a$ of type {\bf a} (possibly equal to $v$) for which $R(a)$ is red and $I(a), I(v)$ have the same endpoints at the end of $S_i$.  But then we would have identified $au$ as an $ab$-pair and the endpoints of $I(u)$ would also have turned red during $S_i$, a contradiction.   Thus at the end of $S_i$, $I(u) \not\subseteq I(v)$ and either $R(u) > R(v) = M$ or $L(u) < L(v)$. 
 
Suppose $R(u) >   M$  at the end of $S_i$. 
 Since $I(u) \subseteq I(v)$ at the start of $S_{r+1}$, by the induction hypothesis (4) we know $u$ is adjacent to $v$ in $G$, thus at the end of $S_i$ (and indeed initially) we have $L(u) \le M$.  Since $R(v) = M$ and is red at the end of $S_i$, there  exists some $\ell$ with $i <\ell < r+1$ for which $u$ participates in $S_\ell$, with $R(u)$ retracted to the left and $R(u) \le M$ at the end of $S_\ell$.

 If $R(u)$ is retracted in the left part of $S_\ell$, then $u$ is of type {\bf a}  or {\bf c} and peeks into $a_kb_k$ (or $a_kc_k$) of $S_\ell$ for some $k$, and $R(u)$ is retracted to $L(a_k) \le M$ during $S_\ell$.  Indeed, $L(a_k) \neq  M$ by (3a) because $L(a_k)$ is white at the beginning of $S_\ell$, by induction hypothesis (2).   
 But then $v$ is not adjacent to whichever of $b_k$, $c_k$ exists before $S_\ell$ but  is adjacent to them after $S_\ell$, contradicting (4).

 If $R(u)$ is retracted in the right part of $S_\ell$, then $u$ is of type {\bf c} and is matched to some vertex $a_k'$.   In this case, $a_{k+1}'$ exists.    Both endpoints of $I(u)$ turn red in $S_\ell$ and do not move between the end of $S_\ell$ and the beginning of $S_{r+1}$.  Then at the end of $S_\ell$,  $R(a_{k-1}') = L(a_k') = L(u)$ and $ R(a_k') = R(u)$, and the right endpoints $R(a_k')$ and $R(a_{k-1}')$ do not move during $S_\ell$, by the construction in the algorithm.  We know $I(v)$ contains $R(u)$ at the end of $S_\ell$ and will show it also contains $L(u)$ then.  If $L(v)$ is red at the beginning of $S_\ell$ then $L(v) \le R(a_{k-1}')$ because $I(v)$ contains $I(u)$ at the beginning of $S_{r+1}$ and red endpoints don't move by (2).  
At the beginning of $S_\ell$, $R(a_{k-1}')$ is white by (2)  so it cannot lie on the zone line at $L(v)$, by (3a). Thus $L(v) < R(a_{k-1}')$, $I(v)$ intersects the intervals for $a_{k-1}', a_k'$ and $a_{k+1}'$, both at this stage and initially, by (4). But this contradicts Lemma~\ref{abc-properties}(\ref{old-AAA}). 
 
 If $R(u)$ is retracted in the base step of $S_\ell$, then $u = c_0$ and a minor modification of the preceding paragraph results in the same contradiction.

 Therefore $L(v)$ is white at the beginning of $S_\ell$ and by Remark~\ref{alg-rem}, $v$ was a terminal {\bf a} for the left part of $S_i$.  If $L(v)$ is moved in a later sweep, it can only be moved to the right as a terminal or merging {\bf a} for the right part of a sweep, so $I(v)$  intersects the intervals for $a_{k-1}', a_k'$ and $a_{k+1}'$ and we get the same contradiction.

Therefore at the end of $S_i$ we have $R(u) \le M$ and $L(u) < L(v)$.  Indeed, $R(u) \neq M$ at the end of $S_i$ by (3a) because $R(u)$ is white.  
At the end of $S_i$, if $L(v)$ is red we have a case symmetric to the one just considered so we may assume $L(v)$ is white then.  Since $R(v)$ is red, this implies that $v$ is a terminal $\bolda$ for the left part of $S_i$ and there exists $\hat{a}$ with $R(v) = L(\hat{a})$  at the end of $S_i$, by (3b).   Thus  if $v$  participates in another sweep prior to $S_{r+1}$, it will be   a terminal   or merging $\bolda$  for the right part of that sweep and   $L(v)$  will move  to the right, turn red, and not be moved again before the start of $S_{r+1}$.  Thus $L(u)$ must move to the right of  the initial position of $L(v)$ during some sweep $S_j$, where $i < j < r+1$.  There are two possibilities.

If $u$ participates in the right part of  $S_j$ then $u = a_k'$ or $u = d_k'$ for some $k \ge 1$.  In this case, there exists $a_{k-1}'$ participating in $S_j$ with right endpoint on a zone line $M'$ of $S_j$. Since $L(u)$ will move to $M'$, turn red in $S_j$, and not move again, we must have $L(v) \le M'$.  So there exists $b_{k-1}'$ or $d_{k-1}'$ so that $u$ peeks into $a_{k-1}'b_{k-1}'$ ($a_{k-1}'d_{k-1}'$).  But $v$ also peeks into this pair, so at the end of $S_j$ the intervals for $u$ and $v $ will have the same endpoints, which will be red and not move between $S_j$ and  $S_{r+1}$. This contradicts the assumption that $I(u)$ is strictly contained in $I(v)$ at the start of $S_{r+1}$.  

Thus    $u$ participates in the left part  or base step of $S_j$. Since $L(u)$ moves to the right, we must have $u = d_k$ for some $k$ and there exists $a_k$ participating in $S_j$. We will show  $I(a_k) \subset I(v)$ at the start of $S_i$ and that this is a contradiction.

After $S_j$, we have $R(a_k) = R(u) <  M$  because $u$ is not adjacent to $\ahat$ after $S_i$ and therefore after any sweep, by the induction hypothesis (4). Before $S_j$, the endpoints of $I(a_k)$ are white  and are in their initial positions. Thus before $S_i$ we have $R(a_k) <  M$, and $M \le R(v)$ since $R(v)$ can only move to the left during $S_i$.
Also, $L(v)$ can only move to the right after $S_i$. Before $S_j$ the endpoints $L(a_k), L(u)$ are in their initial positions, and after $S_j$ they are equal and do not move afterward. Since $L(u) < L(v)$ at the end of $S_i$ and $L(v) < L(u)$ at the beginning of $S_{r+1}$, at the start of $S_i$ we must have $L(v) < L(a_k)$.

Thus $I(a_k) \subset I(v)$ at the start of $S_i$.   But then $a_k$ would have participated in $S_i$ as a type $\boldb$ interval  contained in $I(v)$, and $v$ would not have been a terminal $\bolda$, a contradiction.  This completes Case 2.

 \smallskip
  
  \noindent
 \emph{Case 3.} Both endpoints of $I(v)$ are white at the end of $S_i$. 
 
 Without loss of generality, we may assume $R(u)$ turns red during $S_i$.  
 
 First, suppose both endpoints of $I(u)$ turned red during $S_i$. Therefore, at the start of $S_{r+1}$,  the interval $I(v)$ crosses two zone lines that were established in $S_i$, and thus vertex  $v$ is adjacent to three vertices of type {\bf a} from $S_i$. 
 By the induction hypothesis (4), the graph represented by the intervals at the beginning of sweep  $S_{r+1}$ is the original graph $G$, so $v$ was initially adjacent to these vertices. Since we can choose to begin the algorithm with $S_i$ as the first sweep, these three vertices were of type ${\bf a}$ in the initial closed representation, which contradicts Lemma~\ref{abc-properties}(\ref{old-AAA}).

 Therefore, $R(u)$ turns red during $S_i$ while $L(u)$ remains white.   By the induction hypothesis (2) and Remark~\ref{alg-rem}, 
 $u$ is a terminal $\bolda$ for the left part of   $S_i$ and after $S_i$, the right endpoint   $R(u)$ lies on the leftmost zone line $M$ of $S_i$.
  Thus during   $S_i$, we have $u = a_{k+1}$ for some $k \ge 0$, at least one of $b_k, c_k$ exists, and $u$ peeks into $a_kb_k$ ($a_kc_k$) from the left.  
 At the end of $S_i$, the endpoints $L(a_k), R(u)$ both equal $M$, are red, and do not move between $S_i$ and  $S_{r+1}$.  Since $I(v)$ contains them at the beginning of $S_{r+1}$, at the end of $S_i$ vertex $v$ must be adjacent to both $u$ and  $a_k$  and so $I(v)$ must contain $L(a_k) = R(u)$.  If $R(v) < L(b_k)$ ($R(v) < L(c_k)$)   at the beginning of $S_i$, then $v$ would peek into $a_kb_k$ ($a_kc_k$) during $S_i$ and would have participated in $S_i$, a contradiction.  Hence $v$ is adjacent to  any of $b_k,c_k$ that exist.
 We also know that $L(v) \le M$ at the end of $S_i$ (and originally) since $v$ is adjacent to $u$ in $G$, and furthermore, if $L(v) = M$ then 
 $I(v), I(a_k)$   have the same left endpoint initially, contradicting Proposition~\ref{claims}. Thus $L(v) < M$ initially. 
 
  If $L(v) \le L(u)$ initially  then 
  $I(u) \subseteq I(v)$ before $S_i$ and therefore initially.  This implies the existence of a vertex $x$ peeking into $vu$ from the left   initially.  If $b_k$ exists, then $v, a_k, x, u, b_k,a _{k-1}$ would induce the forbidden graph ${K_{2,4}}^*$ in $G$. If $c_k$ exists, then $a_{k-1},a_k,c_k, u, x, v$ would induce in $G$ a forbidden graph from Family 4. (In both cases, if $k=0$ replace $a_{k-1}$ by $a_1^\prime$.)  Each of these cases leads to a contradiction.

Thus we are left with the instance  $L(u) < L(v)$ initially. 
Since $I(u) \subseteq I(v)$ at the start of $S_{r+1}$, one or both of $L(u), L(v)$ must move between the end of $S_i$ and the beginning of $S_{r+1}$ and by the induction hypothesis, each endpoint can move (and turn red) at most once.   The endpoints $L(u)$ and $L(v)$ must participate in different sweeps for $I(u)$ to be strictly contained in $I(v)$ at the start of $S_{r+1}$.

 First consider the case in which $L(u)$ participates before $L(v)$, that is, $L(u)$  
participates in a sweep $S_p$ with $i < p < r+1$ and $L(v)$ remains white at the end of $S_p$.  Then $u$ is a terminal or merging 
 $\bolda$ for the right part of $S_p$, and thus $u=a_{j+1}'$ for some $j>0$.  Thus there exist  $a_j'$, a zone line $M'$, and at least one of $b_j'$, $d_j'$  in $S_p$ with $R(a_j') = L(u) = M'$ at the end of $S_p$.   If $M' \ge L(v)$ then $v$ would peek into $a_j'b_j'$ (or $a_j'd_j'$) and would have participated in $S_p$.   Hence  $M' < L(v)$ at the end of $S_p$ and  we have $R(a_j') < L(v)$ and hence $v$ is not adjacent to $a_j'$ in $G$.  Between the end of $S_p$ and the beginning of $S_{r+1}$, the left endpoint of $v$ must move so that $L(v) \le L(u) = R(a_j')$, and this means $v$ will be adjacent to $a_j'$ at the beginning of $S_{r+1}$, contradicting the induction hypothesis (4).

Next, consider the case in which $L(v)$  participates before $L(u)$, that is, $L(v)$ 
participates in a sweep $S_q$ with $i < q < r+1$,  and $L(u)$ remains white at the end of $S_q$.   Suppose that after $S_q$ we have $L(v) \le L(u)$.  During $S_q$, there exists some $\ahat$ of type $\bolda$ participating in $S_q$ for which, after $S_q$, interval $I(v)$ has the same endpoints as $I(\ahat)$.  We know $\ahat$ is adjacent to $b_k$ ($c_k$) because $v$ is adjacent to $b_k$ ($c_k$) and $R(\ahat)$ is closed.  Thus $R(\ahat)> R(u)$ even initially.  Intervals of type $\bolda$ are never expanded, so initially, $I(u) \subseteq I(\ahat)$ and there exists a vertex $x$ that peeks into $\ahat u$ initially from the left.  If $b_k$ exists then there exists a vertex $y$ that peeks into $a_kb_k$ from the right and the vertices $a_k,\ahat,y,u,b_k,x$ induce in $G$ a $K_{2,4}^*$, a contradiction.  Otherwise, $c_k$ exists and from sweep $S_i$ we get a forbidden graph from Family 4 with tail induced by $a_{k-1}, a_k,c_k, u,x,\ahat$, a contradiction.

Therefore,  $L(v)$ turns red before $L(u)$ in sweep $S_q$ but $L(u) < L(v)$  after $S_q$ and in a subsequent sweep $S_p$, where $i < q < p < r+1$, endpoint   $L(u)$ moves to the right.
Then $u = a_j'$ for some $j$ and is a merging or terminal $\bolda$ for the right part of   $S_p$. Thus $a_{j-1}'$ exists,   one of $b_{j-1}^\prime, d_{j-1}^\prime$ exists in $S_p$, and $u$ peeks into $a_{j-1}'b_{j-1}'$ ($a_{j-1}'d_{j-1}'$) from the right.  Then $L(u)$ is retracted to $R(a_{j-1}')$ and $v$ must be adjacent to $a_{j-1}'$, because after $S_p$ we have $L(u) = R(a_{j-1}') \in I(v)$.  So $v$ peeks into $a_{j-1}'b_{j-1}'$ ($a_{j-1}'d_{j-1}'$) during $S_p$  and $L(v)$ would have participated in $S_p$.   But $L(v)$ is red at the beginning of $S_p$, contradicting the induction hypothesis (2).

 This completes Case 3 and the proof of  induction hypothesis (1a).  

\medskip
\noindent
{\bf Proof of (1b).}  We prove the first statement;  the second is analogous.  For a contradiction, suppose $R(x)$ is red at the start of sweep $S_{r+1}$.  
  Then it turned red during  an earlier sweep $S_i$, and  by the construction in the  algorithm, there exists a zone line $M$ at $R(x)$ that was established during $S_i$.  
  
  There is a vertex $y$ of type {\bf a} that participates in $S_i$ and has $L(y)$ red and equal to $M$.  Indeed,     by induction hypothesis (2), $L(y)$ does not move between the end of $S_i$ and the beginning of $S_{r+1}$. So 
  $L(v) < L(y) < L(u)$.
 If $R(y) < R(v)$ then $I(y)$ is strictly contained in $I(v)$, while if $R(y) \ge R(v)$ then $I(u)$ is strictly contained in $I(y)$. Both possibilities contradict induction hypothesis (1a).

  \medskip
 
    \noindent
 {\bf Proof of (2).}  For sweep $S_{r+1}$ we verify that the quantities defined in Definition~\ref{pairs-def}
 and the
  operations specified  in Tables~\ref{base-step} and \ref{left-k-step}  
are well-defined.  In addition, we verify that all participating endpoints are white at the beginning of $S_{r+1}$.

\smallskip

\noindent
{\bf Steps in Table~\ref{base-step}:}   We identify an $ab$-pair $a_0b_0$ in $[0.1]$.  In $[0.2]$ we   identify $c_0$ and $d_0$ if they exist;   at most one of each can exist   since $G$ is twin-free, and by induction hypothesis (4) the graph represented is $G$.  If $c_0$ or $d_0$ participates in $S_{r+1}$, then by the construction in the algorithm its participating endpoints are white at the beginning of $S_{r+1}$.  

    We next show there exists a vertex $x$ peeking into $a_0b_0$ from the left as needed in $[0.3]$.  
 By induction hypothesis (1a), all four endpoints of $I(a_0), I(b_0)$ are white and by 
 Remark~\ref{alg-rem}(a), 
  these intervals are in their initial positions.  Thus by Hypothesis~\ref{hyp}, there exists an $x$ that initially peeked into $a_0b_0$ from the left, so $x$ is adjacent to $a_0$ but not $b_0$ in $G$.    By  induction hypothesis (4),  these adjacencies haven't changed so $x$ must peek into $a_0b_0$ at the start of sweep $S_{r+1}$.  If $x$ peeks into $a_0b_0$ from the right then $I(x)$ has been moved by the algorithm and its endpoints are red at the beginning of $S_i$, contradicting induction hypothesis (1b).  Thus $x$ peeks into $a_0b_0$ from the left at the start of $S_{r+1}$ and, by induction hypothesis (1b), $R(x)$ is white.  There are at most two such vertices, for otherwise the forbidden graph $B$ would be induced in $G$, a contradiction.  If there are two such vertices, their right endpoints are in their initial positions by Remark~\ref{alg-rem}(a) and hence unequal by Hypothesis~\ref{hyp}.  Thus $a_1$ and $c_1$ (if it exists) are well-defined and $R(a_1) < R(c_1)$.

We show that if both $a_1$ and $c_1$ exist in [0.3] then $L(a_1) < L(c_1)$ as noted in Table~\ref{base-step}.  For a contradiction, assume $L(c_1) \le L(a_1)$ thus $I(a_1)$ is strictly contained in $I(c_1)$.  By induction hypothesis (1a) all four of these endpoints are white thus by Remark~\ref{alg-rem}(a),
$I(a_1)$ and $I(c_1)$ are in their initial positions.  Now Lemma~\ref{abc-properties}(4) applies to these intervals and we get a contradiction.  The arguments for $a_1^\prime$ and $d_1^\prime$ are similar.

If  there exists a vertex $u$ for which $I(u)$ is strictly contained in $I(a_1)$, then all four of these endpoints are white by induction hypothesis (1a) and are in their initial positions by Remark~\ref{alg-rem}(a), so Proposition~\ref{claims}(1) implies that there is at most one such $I(u)$.  Hence $b_1$, and similarly $b_1^\prime$, is well-defined in $[0.4]$.  (If $L(a_1)$ is red at the beginning of $S_{r+1}$ then $a_1$ is a terminal ${\bf a}$  or a merging ${\bf a}$ for $S_{r+1}$ and $L(a_1)$ does not participate in $S_{r+1}$.)

 It remains to prove that $L(c_1)$ is white  at the start of $S_{r+1}$.
Suppose $c_1$ exists   and $L(c_1)$ is red.     Since $R(c_1)$ is white, Remark~\ref{alg-rem}(b)   implies that $c_1$ was a terminal {\bf a} for the right part of an earlier sweep $S_j$ beginning with an inclusion $a_0'b_0'$. Then for some $n \ge 0$, $c_1$ is $a_{n+1}^\prime$ in $S_j$. So at the end of $S_j$ there exist intervals $I(a_n^\prime)$ and either $I(d_n^\prime)$ or $I(b_n^\prime)$ with right endpoints equal to $L(a_{n+1}')$, where $I(a_n^\prime)$ is closed on the right and the other is open on the right. If $n=0$, we interpret $a_{-1}'$ to be a vertex that peeks in to $a'_0b'_0$ from the left. If $b_n^\prime$ exists then $a_0,a_1,c_1,b_n^\prime,a_{n-1}^\prime,a_n^\prime$ induce in $G$ the tail of a graph in Family 4.   
 Thus $n>0$ and  $d_n^\prime$ exists,  and then the two sweeps meeting at  $a_1,c_1,a_n^\prime,d_n^\prime$ induce a graph in Family 5. Again we have a contradiction, and thus $L(c_1)$ is also white at the start of $S_{r+1}$.

\smallskip

\noindent
{\bf  Steps in Table~\ref{left-k-step}: }
 
Now assume that at the beginning of $S_{r+1}$, all quantities up through Step $[k.1]$ are well-defined and all participating endpoints were white.
No new quantities are defined in step [k.1].
If the sweep continues to step [k.2], we establish a zone line $M_k = L(a_k)$ and  identify $d_k$ if it exists.  There can be only one such vertex $d_k$ since $G$ is twin-free and the graph represented by the intervals is $G$ by induction hypothesis (4).   By the construction in the algorithm, if $d_k$ participates, its endpoints are white at the beginning of $S_{r+1}$.

Next we show that $a_{k+1}$ exists, as required in step $[k.3]$.
  If $b_k$ exists, then the argument we used to show $a_1$ existed in step $[0.3]$ also applies and as before, $R(a_{k+1})$ is white at the beginning of $S_{r+1}$.  If $b_k$ does not exist and we have reached step $[k.3]$ then, because $a_k$ and $c_k$ are not twins, Lemma~\ref{abc-properties}(6) implies that there exists a vertex $x$  that peeks into $a_kc_k$ from the left.    There are at most two such vertices, for otherwise a forbidden graph from Family 1 would be induced in $G$, a contradiction. One such vertex $x$ is $a_{k+1}$ and the other, if it exists, is $c_{k+1}$.
  
   We will next show $R(x)$ is white.   
   For a contradiction, suppose $R(x)$ is red at the start of sweep $S_{r+1}$.  Then it turned red during an earlier sweep $ S_j$ and by the construction in the algorithm, there exists    a zone line $M$ at $R(x)$ that was established during $ S_j$.  
 Induction hypothesis (3b), implies that there exists a vertex $\hat a$ with $L(\hat a)$ red and equal to  $M$.   Thus $L(a_k) \le L(\hat a) < L(c_k)$ and in fact, by the induction hypothesis (3a), $L(a_k) < L(\hat a)$ because $L(a_k)$ is white, since $a_k$ is not a merging {\bf a}.  By induction hypothesis (1a) we must have $R(a_k) < R(\hat a) < R(c_k)$.    By the induction hypothesis (4), the graph at the beginning of sweep $S_{r+1}$ is the original graph $G$.  If $k \ge 2$ and $c_{k-1}$ exists,   the vertices $a_{k-2}, a_{k-1}, c_{k-1}, a_k,c_k, \hat a$ induce in $G$ the tail of a forbidden graph in Family 1.  Thus either  $k=1$ or $c_{k-1}$ does not exist. In both cases, $b_{k-1}$ exists, there is a vertex  $z$ that peeks into $a_{k-1}b_{k-1}$ from the right, and $a_{k-1},  a_k,c_k, \hat a, b_{k-1}, z$ induce the forbidden graph $B$, a contradiction.  Thus $R(x)$ is white, i.e., $R(a_{k+1})$ and $R(c_{k+1})$ (if it exists) are white. 

  The remaining arguments used in  showing $a_1,b_1,c_1$ are well-defined and in justifying the steps of $[0.3]$ can be applied to $a_{k+1}, b_{k+1}$ and $c_{k+1}$ and to the steps of $[k.3]$  as well.  By Lemma~\ref{abc-properties}(\ref{ac-ab-peekers}), and the induction hypothesis (4), any vertex that peeks into $a_kc_k$ from the left also peeks into $a_kb_k$ from the left.    If $b_k$  exists, then the same arguments we used to show the endpoints of $a_1$ and $c_1$ are white also apply to $a_{k+1}$ and $c_{k+1}$.
 So assume $b_k$ does not exist.  If $a_k$ is a terminal ${\bf a}$ or a merging ${\bf a}$ for the left part of $S_{r+1}$, there is nothing more to prove.

 So we may assume $c_k$ and $a_{k+1}$ exist and possibly also $c_{k+1}$. We must prove that 
$L(a_{k+1})$  and $L(c_{k+1})$  are also white at the start of  $S_{r+1}$.  If $b_{k+1}$ exists, its endpoints are white at the start of $S_{r+1}$ by $(1a)$.     If $L(a_{k+1})$ is red at the beginning of $S_{r+1}$ then $a_{k+1}$ is a   merging  ${\bf a}$ or a terminal ${\bf a}$  for $S_{r+1}$ and $L(a_{k+1})$ does not participate in $S_{r+1}$, so there is nothing to prove.  If $c_{k+1}$ exists then $L(c_{k+1})$ is white by the same argument used to show $L(c_1)$ is white (here, $a_1,c_1, a_0$ are replaced by $a_{k+1},c_{k+1}, a_{k }$).    If $d_{k+1}$ exists and participates in $S_{r+1}$ then its endpoints are white at the beginning of $S_{r+1}$.
 This proves (2) for the left part of $S_{r+1}$, and the right part is analogous.

    \bigskip

\noindent{\bf Proof of (3).} 
Consider the current sweep $S_{r+1}$.

(a) If $M$ was established during $S_j$ where $1 \le j < r+1$,  then (3a) holds at the beginning of $S_{r+1}$ by the induction hypothesis, and by part (2) these red endpoints on $M$ are not moved during $S_{r+1}$. 
Otherwise, $M$ was established (without loss of generality) during the left part of sweep $S_{r+1}$.   Let $a_k$ be the vertex of type ${\bf a}$ participating in $S_{r+1}$ with $L(a_k) = M$.  Since left endpoints of vertices of type ${\bf a}$ do not move during left parts of sweeps, $L(a_k) = M$  at the beginning of  $S_{r+1}$  and hence by the induction hypothesis, $L(a_k)$ was white at the beginning of $S_{r+1}$ and in its initial position.  If there exists another vertex with a white endpoint at $M$  after $S_{r+1}$, then that endpoint is also in its initial position and initially shared an endpoint with $a_k$, contradicting Hypothesis~\ref{hyp}.

(b)  If $M$ was established during $S_j$ where $1 \le j < r+1$,  then (3b) holds at the beginning of $S_{r+1}$ by the induction hypothesis, and by part (2) these red endpoints on $M$ are not moved during $S_{r+1}$.    If $M$ was established during $S_{r+1}$ then such an $a, \ahat$ exist by the construction of the algorithm.

 (c)  and (d)  These follow  directly from (b) and the construction in the algorithm.
 
 (e)  For each $M$ established during $S_j$ where $1 \le j < r+1$, (3e) holds at the beginning of $S_{r+1}$ by the induction hypothesis.   By part (2), the red endpoints of these participating intervals are not moved during $S_{r+1}$.  If the left endpoint of an interval is moved to $M$ during $S_{r+1}$ then it is a merging ${\bf c}$, for otherwise an interval of type $\bolda$ would have a white left endpoint on $M$ at the beginning of $S_{r+1}$, contradicting (3a).     A similar result holds if the right endpoint of an interval is moved to $M$ during $S_{r+1}$. This completes the proof of (3).

 \bigskip
 
Before we can present the proof of (4) we need several technical lemmas.  The algorithm allows sweeps to be done in any order and this affects  the sweep in which  an interval will participate.  A vertex that peeks into an $ab$-pair from the right could be of type {\bf d} either as $d_k^\prime$ in the right part of some sweep or as $d_k$ in the left part of a different sweep, depending on which  is done first.   Likewise, when a sweep $S$ merges with an earlier sweep $S'$, some intervals that would have participated in $S$ had it been performed earlier have instead participated in $S'$.  Lemmas~\ref{xd-lemma} and  \ref{merging-lem}
 consider  these possibilities and ensure that intervals can be moved as required by the algorithm without changing the graph represented.

  \begin{lemma}
   \label{xd-lemma}
      Suppose $a_{k+1}b_{k+1}$ is an $ab$-pair in the left part of sweep $S_{r+1}$.  If $x$  peeks into $a_{k+1}b_{k+1}$ from the right and $x \ne a_k$, then $x = d_k$.  An analogous result  holds for $ab$-pairs in the right part of $S_{r+1}$. 
   \end{lemma}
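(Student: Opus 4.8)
The plan is to analyze the possible positions of $I(x)$ relative to the already-determined intervals of the sweep and to rule out every configuration except $x = d_k$. Since $a_{k+1}b_{k+1}$ is an $ab$-pair in the left part of $S_{r+1}$, by induction hypothesis (2) of Theorem~\ref{alg-correct} all the relevant quantities are well-defined and, by (1a), the intervals $I(a_{k+1})$ and $I(b_{k+1})$ (and those contained in them or peeking into them) are in their initial closed positions, so Proposition~\ref{claims} and Lemma~\ref{abc-properties} apply to the subgraph of intervals that have not yet moved. First I would record what ``$x$ peeks into $a_{k+1}b_{k+1}$ from the right'' means: $I(x)$ intersects $I(a_{k+1})$ but not $I(b_{k+1})$, with $R(b_{k+1}) \le L(x)$. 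In particular $L(x) > R(b_{k+1})$, so $I(x)$ cannot be contained in $I(a_{k+1})$ (it would then be a candidate for $b_{k+1}$, and by Proposition~\ref{claims}(1) $I(a_{k+1})$ cannot strictly contain two intervals). Hence $R(x) > R(a_{k+1})$, i.e.\ $I(x)$ pokes out to the right of $I(a_{k+1})$.

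Next I would use the structure of the sweep to locate $I(x)$. Recall $a_{k+1}$ (and $c_{k+1}$, if it exists) were identified as the intervals peeking into $a_kb_k$ (or $a_kc_k$) from the left, so $R(a_{k+1}) \le L(b_k)$ (resp.\ $L(c_k)$) and $L(a_{k+1}) < R(a_{k+1})$ lies to the left of the previous zone line $M_k$. Since $I(x)$ intersects $I(a_{k+1})$ and extends to the right of it, $L(x) \le R(a_{k+1}) \le L(b_k)$ and $R(x) > R(a_{k+1})$. I would then show $x$ is adjacent to $a_k$: if not, then $R(x) < L(a_k)$, but combined with $x$ intersecting $I(a_{k+1})$ this forces $I(x)$ to be another interval peeking into $a_kb_k$ (or $a_kc_k$) from the left besides $a_{k+1}$ and $c_{k+1}$, contradicting Proposition~\ref{claims}(3) (at most two peek in from one side) together with the fact that $c_{k+1}$, if it exists, has $R(c_{k+1}) > R(a_{k+1})$ so $x \ne c_{k+1}$ would give a third peeker. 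Actually I must be careful: if $x = c_{k+1}$ the conclusion fails, so I need to separately exclude $x = c_{k+1}$. Since $c_{k+1}$ peeks into $a_kb_k$ from the left, $R(c_{k+1}) \le L(b_k) < \cdots$; but then $R(c_{k+1})$ is to the left of $I(b_k)$, whereas $x$ peeking into $a_{k+1}b_{k+1}$ from the right and $I(b_{k+1}) \subset I(a_{k+1})$ forces $L(x) > R(b_{k+1})$, and one checks $R(b_{k+1})$ lies strictly to the right of $L(a_{k+1})$, giving a position clash with where $c_{k+1}$ must sit; alternatively, if $x=c_{k+1}$ then $I(c_{k+1})$ would strictly contain $I(b_{k+1})$ while also $I(a_{k+1})$ contains $I(b_{k+1})$, contradicting Proposition~\ref{claims}(2). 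So $x \ne c_{k+1}$.

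Having established $x$ is adjacent to $a_k$, is not contained in $I(a_{k+1})$, and (assumption) $x \ne a_k$, I would conclude that $I(x)$ has the same closed neighborhood as $I(a_{k+1})$ except possibly for $a_k$ (and the right-side vertices), which is exactly the defining property of $d_k$ in Definition~\ref{pairs-def}(b): a vertex whose closed neighborhood equals that of $a_{k+1}$ except that it is not adjacent to $a_k$ (or $d_{k-1}$). To make this precise I would verify $x$ is nonadjacent to $a_k$ — follows from $x$ peeking into $a_{k+1}b_{k+1}$ from the right, since $b_{k+1} \subset a_{k+1}$ sits ``inside'' near $L(a_k)$ — and that $x$ agrees with $a_{k+1}$ on all other intervals, using that any interval meeting $I(x)$ on its right end would also meet $I(a_{k+1})$ unless it creates a forbidden configuration; the candidate failures here reduce to forbidden graphs $K_{1,4}$, $B$, or a Family~1 member, each excluded since $G$ is $\cal F$-free. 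The right-part analogue is symmetric, swapping ``left''/``right'' and the roles of $c$ and $d$ as in Definition~\ref{pairs-def}(c).

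The main obstacle I anticipate is the careful bookkeeping needed to show that $x$ and $a_{k+1}$ have identical closed neighborhoods apart from $a_k$: one must check not just intervals near the right end of $I(x)$ but also make sure $x$ is not nonadjacent to something $a_{k+1}$ sees (i.e.\ $I(x)$ does not fail to reach as far left as $I(a_{k+1})$), and then invoke the right forbidden subgraph — and also confirm uniqueness, i.e.\ that there is only one such $x$, which follows since $G$ is twin-free (otherwise $x$ and $d_k$, or two copies of $d_k$, would be twins). These are all routine applications of Proposition~\ref{claims} and the $\cal F$-free hypothesis, but assembling them into a clean case split is the delicate part.
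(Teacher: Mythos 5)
There is a genuine gap, and it starts with a misreading of Definition~\ref{pairs-def}. You describe $d_k$ as ``a vertex whose closed neighborhood equals that of $a_{k+1}$ except that it is not adjacent to $a_k$'' --- that is the definition of $d_{k+1}$, not $d_k$. The vertex $d_k$ is defined relative to $a_k$: its closed neighborhood equals $N[a_k]$ except that it is not adjacent to $a_{k-1}$ or $d_{k-1}$. In particular $d_k$ \emph{is} adjacent to $a_k$, which is why your proposal ends up contradicting itself: you first (correctly, though the ``third peeker'' justification is wrong --- a vertex with $R(x)<L(a_k)$ does not peek into $a_kb_k$ at all; the right reason is that $I(x)$ would then sit inside $I(a_{k+1})$ alongside $I(b_{k+1})$, violating Proposition~\ref{claims}(1)) argue that $x$ is adjacent to $a_k$, and later say you would ``verify $x$ is nonadjacent to $a_k$.'' The actual target is to show $N[x]=N[a_k]\setminus\{a_{k-1},d_{k-1}\}$. (Also, the exclusion $x\neq c_{k+1}$ should come from Lemma~\ref{abc-properties}(2), which says $I(c_{k+1})$ meets $I(b_{k+1})$ while $x$ does not; your ``$I(c_{k+1})$ strictly contains $I(b_{k+1})$'' variant is inconsistent with $x$ being nonadjacent to $b_{k+1}$.)

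More importantly, the plan is entirely local, and the hard direction of the neighborhood equality --- that every vertex adjacent to $a_k$ but not to $x$ is one of $a_{k-1},d_{k-1}$ --- cannot be settled by one forbidden-subgraph check near the $k$-th zone line. If $x\neq d_k$, a witness $v$ adjacent to $a_k$ but not $x$ need not immediately produce a forbidden configuration; the paper's proof instead shows that such a witness spawns another witness one level closer to the base pair, yielding a chain $x_k=x,\,x_{k-1},\dots,x_0$ where each $x_j$ crosses the zone line $M_j$, has $N[x_j]\subseteq N[a_j]$, and is not a participant of the sweep. Maintaining these invariants uses Families 1, 2, 3 \emph{and} 4 (not just $K_{1,4}$, $B$, and Family 1 as you anticipate), and the contradiction only materializes at the base pair $a_0b_0$, where $x_0$ must be adjacent to $b_0$ and a Family~2 graph appears. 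Without this descent your argument stalls exactly at the step you flag as ``routine bookkeeping'': there is no single forbidden graph that kills a lone bad witness at level $k$.
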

   
   \begin{proof}
    For a contradiction, suppose $x \ne d_k$. Setting $x_k = x$, we will show there exist vertices $x_k, x_{k-1}, \ldots, x_0$ such that the following are satisfied before and after $S_{r+1}$, for each  $j = k, k-1, \dots, 0$:
    
    \smallskip
    
   (a)  $I(x_j)$ lies completely to the right of $I(x_{j+1})$.
   
   (b)  $x_j$  is not identified as a vertex of type ${\bf a}, {\bf b},{\bf c},{\bf d}$ in the left part of   $S_{r+1}$.
   
   (c)  $I(x_j)$  crosses the zone line $M_j = L(a_j)$ established during  $S_{r+1}$, so $x_j$ is adjacent to $a_j$ and $a_{j+1}$.
   
   (d)  Every vertex  that is adjacent to $x_j$ is adjacent to $a_j$.
    
        \smallskip

      \noindent
 At the beginning of $S_{r+1}$, we first verify these conditions for $j = k$, where (a) is true vacuously.  To establish (b), note that $x_k$ is not equal to any of $a_{k+1}, b_{k+1}, c_{k+1}, d_{k+1}, b_k, c_k$ because $I(x_k)$ intersects $I(a_{k+1})$ but not $I(b_{k+1})$, and $x_k \neq a_k, d_k$ by assumption. If $x_k$ were any other vertex participating in the left part of $S_{r+1}$, then it would violate Lemma~\ref{abc-properties}(\ref{old-AAA}). For the same reason, $R(x_k) < R(a_k)$. Then (c) follows because  $I(x_k) \not\subseteq I(a_k)$, so $L(x_k) < L(a_k) = M_k$ and thus $I(x_k)$ crosses $M_k$.

 To show (d) holds for $x_k$, suppose to the contrary that  some vertex $v$  is adjacent to $x_k$ but not to $a_k$.    Then $I(v)$ must intersect $I(x_k)$  on the left and not intersect $I(a_k)$,  so $R(v) < M_k$.   Since $v \neq b_{k+1}$ and $I(v) \not\subseteq I(a_{k+1})$, we know $L(v) < L(a_{k+1})$.  But then $I(b_{k+1})$ is contained in both $I(a_{k+1})$ and $I(v)$.  By induction hypothesis (1a),   all these endpoints are white  and by Remark~\ref{alg-rem}(a) are in their initial positions, so this contradicts  Proposition~\ref{claims}.  Thus $x_k$ satisfies (a)-(d).
  
Now suppose $x_k, x_{k-1}, \ldots, x_j$ exist and satisfy (a)-(d) for some $j$, $1 \le j \le k$.
Since (b) implies $x_j \neq d_j$, it follows from Definition~\ref{pairs-def} and (d) that there exists a vertex $x_{j-1}$, different from $a_{j-1}$ and $d_{j-1}$, that is adjacent to $a_j$ but not $x_j$.     We will show (a)-(d) hold for $x_{j-1}$.

Since $I(x_j)$  crosses $M_j$, we have $L(x_j) < L(a_j)$ and so $I(x_{j-1})$ intersects $I(a_j)$ on the right and  is completely to the right of $I(x_j)$. This proves (a).

Next we show (b).  We know $x_{j-1} \neq a_j, d_j$ because $a_j$ and $d_j$ are adjacent to $x_j$ and $x_{j-1}$ is not.  If $x_{j-1} = b_j$ then $x_j$ would peek in to $a_jb_j$ from the left and violate (b) in case $j$, and the same reasoning shows $x_{j-1} \neq c_j$.     We know $x_{j-1} \neq a_{j-1},  d_{j-1}$ by assumption and $x_{j-1} \ne b_{j-1}, c_{j-1}$ because $x_{j-1}$ is adjacent to $a_j$. If $x_{j-1}$ were any other participating vertex it would violate Lemma~\ref{abc-properties}(\ref{old-AAA}).

To establish (c), suppose $I(x_{j-1})$ does not cross $M_{j-1}$.  Then either $x_{j-1} = b_{j-1}$, which we have just shown is false, or $R(x_{j-1}) > R(a_{j-1})$, violating Lemma~\ref{abc-properties}(\ref{old-AAA}).

Next, we establish (d).  Suppose there exists a vertex $v$ that is adjacent to $x_{j-1} $ but not to $a_{j-1}$.     First suppose $I(v)$ does not intersect $I(x_j)$.  Then we get a forbidden graph in Family 3 starting at $a_{k+1}b_{k+1}$, sweeping rightward, with tail induced by $a_{j+1}, a_j, x_j, a_{j-1}, v$, a contradiction.   Next suppose that $v$ is adjacent to $x_j$, but $I(v)$ is contained in $I(a_j)$.    In this case, we get a forbidden graph in Family 2 starting at $a_{k+1}b_{k+1}$, sweeping rightward, with tail induced by $a_{j+1}, a_j, x_j, a_{j-1}, x_{j-1}, v$, a contradiction.   Next suppose $L(v) < M_j$ but that $v$ is not adjacent to $x_{j+1}$.  Then 
we get a forbidden graph in Family 1 starting at $a_{k+1}b_{k+1}$, sweeping rightward, with tail induced by $a_{j+2},a_{j+1}, x_{j+1}, a_j, x_j,   v$, which gives a contradiction.    Finally, suppose $v$ is adjacent to $x_{j+1}$.  Then 
we get a forbidden graph in Family 4 starting at $a_{k+1}b_{k+1}$, sweeping rightward, with tail induced by $a_{j+2},a_{j+1}, x_{j+1}, x_j,      x_{j-1},v$, a contradiction.   

Applying this construction when $j = 1$ gives a vertex $x_0$ satisfying (a)-(d). In particular, every vertex adjacent to $x_0$ is adjacent to $a_0$.  We know $x_0$ is adjacent to $b_0$, for otherwise $x_0$ would peek into $a_0b_0$ and be identified as a participating vertex,  contradicting (b).  Since $x_0 \neq d_0$, there exists a vertex $y$ that is adjacent to $a_0$ but not to $x_0$ and by Definition~\ref{pairs-def}, $y$ must be adjacent to $b_0$.  Note $I(y) \not\subseteq I(a_0)$ by induction hypothesis (1a), Remark~\ref{alg-rem}(a), and    Proposition~\ref{claims} (parts (1) and (2)).   Now we get a forbidden graph from Family 2, sweeping rightward, with tail induced by $a_1,a_0,x_0,a_1', y, b_0$. 

Thus $x_0$ leads to a contradiction and we see that $x = x_k = d_k$. \qed

   \end{proof}

   \begin{lemma}
   \label{merging-lem}
   Let  $S_i$ be a sweep starting at $\hat a_0 \hat b_0$ whose right part has  a terminal {\bf a} at $\hat a_n'$. For $j  >  i$ let
    $S_j$ be a sweep starting at $a_0b_0$ whose left part has a merging {\bf a} at  $a_k$, and suppose
    $a_k = \hat a_n'$.  If $S_j$ were performed before $S_i$, it would terminate at $a_{k+m}$ for some $m\ge 0$ and for all of the following that exist, $a_{k+\ell} = \hat a_{n-\ell}'$, $b_{k+\ell} = \hat b_{n-\ell}'$, $c_{k+\ell} = \hat c_{n-\ell}'$.  The analogous result holds if $S_i$ is a left sweep and $S_j$ is a right sweep.
   \end{lemma}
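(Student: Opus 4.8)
The plan is to prove the lemma by induction on $\ell \ge 0$: if $S_j$ is performed before $S_i$, then its left part, having reached the vertex $a_k = \hat a_n'$, no longer merges (the left endpoint of $I(a_k)$ is now white) but continues, and it retraces the right‐sweep chain of $S_i$ in reverse, with $a_{k+\ell} = \hat a_{n-\ell}'$, with $b_{k+\ell}$ existing exactly when $\hat b_{n-\ell}'$ does and then equal to it, and likewise $c_{k+\ell}$ and $\hat c_{n-\ell}'$. The left part then halts at the first index $n-m$ along this chain at which neither a type‐$\mathbf b$ vertex nor a second peeker exists, declaring $a_{k+m}$ a terminal $\mathbf a$. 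Throughout, the first move is to reduce everything to a single fixed closed representation: by induction hypotheses (1a) and (1b) of Theorem~\ref{alg-correct}, the endpoints of the $ab$-pair initiating $S_i$ and of all vertices peeking into it are white at the start of $S_i$; iterating this and using Remark~\ref{alg-rem}(a) puts the whole chain $\hat a_0, \hat a_1', \dots, \hat a_n'$ of $S_i$, together with its associated vertices of types $\mathbf b$, $\mathbf c$, $\mathbf d$, in their initial positions, and the same holds for the portion of $S_j$'s left part up to $a_k$. Since the graph represented never changes (induction hypothesis (4)), the vertices $S_j$ identifies as $a_i, b_i, c_i$ for $i \le k$ are the same whether $S_j$ runs before or after $S_i$, so the whole argument may be carried out in this common representation, where Proposition~\ref{claims} and Lemma~\ref{abc-properties} apply.

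For the base case $\ell = 0$, we have $a_k = \hat a_n'$ by hypothesis. Since $\hat a_n'$ is a terminal $\mathbf a$ for the right part of $S_i$, no interval is strictly contained in $I(\hat a_n')$, so $\hat b_n'$ and hence $b_k$ do not exist, and no second peeker into $\hat a_{n-1}'\hat b_{n-1}'$ from the right exists. A partner peeker $c_k$ does exist because $a_k$ is a merging $\mathbf a$ in $S_j$; using that $\hat a_n'$ peeks into $\hat a_{n-1}'\hat b_{n-1}'$ from the right, together with Lemma~\ref{abc-properties}(\ref{peekers-not-contained}) (so $L(a_k) < L(c_k)$), one checks that the closed neighborhood of $c_k$ is that of $\hat a_n'$ with $\hat a_{n-1}'$ and (if present) $\hat c_{n-1}'$ removed; that is, $c_k = \hat c_n'$, and conversely any vertex satisfying the defining property of $\hat c_n'$ must be this second peeker.

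For the inductive step, assume $a_{k+\ell} = \hat a_{n-\ell}'$ and $b_{k+\ell} = \hat b_{n-\ell}'$. In $S_j$'s left part we next seek the peekers into $\hat a_{n-\ell}'\hat b_{n-\ell}'$ from the left, and the analogous form of Lemma~\ref{xd-lemma} for the right part of $S_i$ says these are at most the two vertices $\hat a_{n-\ell-1}'$ and $\hat c_{n-\ell-1}'$. The crux is to verify $\hat a_{n-\ell-1}'$ really peeks into $\hat a_{n-\ell}'\hat b_{n-\ell}'$ from the left: it intersects $\hat a_{n-\ell}'$ and lies to its left because $\hat a_{n-\ell}'$ was selected as a peeker into $\hat a_{n-\ell-1}'\hat b_{n-\ell-1}'$ from the right in $S_i$; and it misses $\hat b_{n-\ell}'$, for otherwise $\hat b_{n-\ell}'$ would be strictly contained in both $\hat a_{n-\ell}'$ and $\hat a_{n-\ell-1}'$ (contradicting Proposition~\ref{claims}(2)), or else $\hat a_{n-\ell-1}'$ together with $\hat a_{n-\ell}', \hat a_{n-\ell+1}'$ and $\hat b_{n-\ell}'$ would induce a forbidden graph from one of Families~1--4 on the $S_i$-chain, exactly as in the proof of Lemma~\ref{xd-lemma}. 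Comparing right endpoints and invoking Lemma~\ref{abc-properties}(\ref{peekers-not-contained}) then forces $a_{k+\ell+1} = \hat a_{n-\ell-1}'$ and, when present, $c_{k+\ell+1} = \hat c_{n-\ell-1}'$; and $b_{k+\ell+1} = \hat b_{n-\ell-1}'$ because both are the unique interval strictly contained in $I(\hat a_{n-\ell-1}')$ (Proposition~\ref{claims}(1)). When we reach an index $n-m$ with neither a type‐$\mathbf b$ vertex nor a second peeker present, $S_j$'s left part stops at the terminal $\mathbf a$ vertex $a_{k+m}$, yielding the required $m$.

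The main obstacle is this verification inside the inductive step: ruling out, via the forbidden-subgraph arguments together with Proposition~\ref{claims} and Lemma~\ref{abc-properties}(\ref{old-AAA}), both that $\hat a_{n-\ell-1}'$ fails to peek into $\hat a_{n-\ell}'\hat b_{n-\ell}'$ from the left and that some vertex off the $S_i$-chain also peeks in there. One must also keep careful track of which of the vertices $\hat b'$, $\hat c'$, $\hat d'$ is present at each level, and handle the boundary behaviour near $\hat a_0$ (where $\hat b_0$ exists and the primed notation degenerates), which is the most delicate bookkeeping. The analogous statement with the roles of left and right interchanged follows from the symmetry built into Definition~\ref{pairs-def}.
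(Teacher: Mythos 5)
Your overall strategy (put everything back in the common initial closed representation, then march along the chain identifying $a_{k+\ell}$ with $\hat a_{n-\ell}'$, etc.) matches the paper's in spirit, but it is organized as a forward induction from $\ell=0$, whereas the paper isolates the one genuinely hard identification --- that the second left-peeker $c_{k+\ell}$ of $S_j$ coincides with the neighborhood-defined vertex $\hat c_{n-\ell}'$ of $S_i$ --- and proves it by taking a maximal index of disagreement and deriving a Family~1 (three left-peekers into an $ac$-pair) or Lemma~\ref{abc-properties}(3) contradiction. That identification is exactly where your write-up has a gap. In your base case you assert that ``one checks'' that $N[c_k]$ equals $N[\hat a_n']$ minus $\hat a_{n-1}'$ and $\hat c_{n-1}'$; but $c_k$ is defined only as a second peeker into a pair $a_{k-1}b_{k-1}$ lying on the \emph{other} side of $a_k$, in $S_j$'s chain, so nothing in its definition controls its neighborhood to the left of $a_k$. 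Ruling out a vertex $y\notin\{\hat a_{n-1}',\hat c_{n-1}'\}$ adjacent to $a_k$ but not to $c_k$ is the substance of the lemma, and it requires the forbidden-subgraph argument (such a $y$, together with $\hat a_{n-1}'$ and $\hat c_{n-1}'$, gives three left-peekers into the $ac$-pair $a_kc_k$, inducing Family~1); no amount of endpoint comparison does this for you.

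The second problem is in your inductive step: you lean on the right-part analogue of Lemma~\ref{xd-lemma} to say the left-peekers into $\hat a_{n-\ell}'\hat b_{n-\ell}'$ are among $\{\hat a_{n-\ell-1}',\hat c_{n-\ell-1}'\}$. That lemma is stated and proved only for $ab$-pairs, so your argument breaks precisely at the links of the chain where $\hat b_{n-\ell}'$ does not exist --- and there the two sweeps do not even peek into the same pair: $S_j$'s left part peeks into the leftward $ac$-pair $a_{k+\ell}c_{k+\ell}=\hat a_{n-\ell}'\hat c_{n-\ell}'$, while $S_i$'s right part peeked into the rightward $ad$-pair $\hat a_{n-\ell}'\hat d_{n-\ell}'$, and $\hat c_{n-\ell}'$ and $\hat d_{n-\ell}'$ are different objects (one is the ``same closed neighborhood except\dots'' vertex of Definition~\ref{pairs-def}, the other a second peeker). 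You flag this as ``delicate bookkeeping,'' but it is not bookkeeping: it is the case the paper's uniform argument (which works directly from the closed-neighborhood definition of $\hat c'$ and the Family~1 contradiction, independently of whether the type-{\bf b} vertex exists at that level) is designed to handle, and your proposal supplies no mechanism for it.
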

    
     \begin{proof}
     One can check that $a_{k+\ell}$ of $S_j$ would be $\hat a_{n-\ell}'$ of $S_i$ and it follows directly that $b_{k+\ell} = \hat b_{n-\ell}'$.  When $\ell = n$ we write $\hat a_0$ instead of $\hat a_0^\prime$, etc. Now consider $c_{k+\ell}$ from $S_j$.   By Lemma~\ref{abc-properties}(6), no interval can intersect $I(c_{k+\ell})$ without also intersecting $I(a_{k+\ell})$.   Let $p$ be maximum so that $c_{k+p} \neq \hat c_{n-p}'$.  If $p=n$ then $c_{k+n} \neq \hat c_0$ and by Definition~\ref{pairs-def}, there exists some $v$ adjacent to $\hat a_0 = a_{k+n}$ but not to $c_{k+n}$. 
       We know  $\hat b_0 = b_{k+n}$ is adjacent to $c_{k+n}$ by  Lemma~\ref{abc-properties}(\ref{ac-ab-peekers}), so $v \neq \hat b_0$.  Furthermore,  $v$ does not peek into $\hat a_0 \hat b_0$ from the left by the definition of $\hat c_0$.  Thus $I(v)$ must intersect $I(\hat b_0) = I(b_{k+n})$.  This contradicts Lemma~\ref {abc-properties}(3). Thus $p<n$, and by definition of $\hat c_{n-p}'$, 
there exists a vertex $y$ other than $\hat a_{n-p-1}' = a_{k+p+1}$ and $\hat c_{n-p-1}' = c_{k+p+1}$, that meets $a_{k+p}$ but not $c_{k+p}$.     
  But then the three vertices $a_{k+p+1}, c_{k+p+1}, y$ peek into $a_{k+p}c_{k+p}$ from the left,   inducing a forbidden graph from Family 1 in $G$, a contradiction. \qed

      \end{proof}     
         
            \begin{defn}
      {\rm A vertex $d_k$ identified as a type ${\bf d}$ vertex    in the left part of sweep $S$ is called an \emph{NTM-d} (\emph{needs to move}) if either (i)    $b_{k+1}$ in exists in $S$ and $d_k$ peeks into $a_{k+1}b_{k+1}$ from the right or (ii) $d_{k+1}$ exists and is an NTM-$d$.  Analogously, we define NTM-$c$ vertices for the right part of  $S$.
       }
       \label{ntm-def}
      \end{defn}

      \begin{remark}
      For any vertex $d_k$ which is an NTM-d for sweep $S$, there exists an $ab$-pair $a_nb_n$ for some $n \ge k+1$ so that for each $j$ with $k \le j \le n-1$, vertex $d_j$ is an NTM-d for $S$ and $d_j$ peeks into $a_{j+1}d_{j+1}$. An analogous statement holds for NTM-$c$ vertices.
      \label{ntm-rem}
      \end{remark}
      
        \begin{lemma}
     \label{D-lemma}
     If $d$ is an NTM-d vertex in sweep $S_{r+1}$ then the endpoints of $I(d)$ are white at the beginning of $S_{r+1}$.
     The analogous result is true for NTM-$c$ vertices.
     \end{lemma}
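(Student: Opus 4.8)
`\textbf{Proof plan.}` The goal is to show that an NTM-$d$ vertex $d$ has both endpoints white at the start of $S_{r+1}$. By Remark~\ref{alg-rem}(a), an endpoint is white exactly when it is in its initial closed position, so it suffices to rule out that either endpoint of $I(d)$ was colored red in an earlier sweep $S_i$, $1 \le i \le r$. By Remark~\ref{ntm-rem} we may work with the chain $d = d_k, d_{k+1}, \ldots$ up to an $ab$-pair $a_n b_n$ of $S_{r+1}$, where each $d_j$ peeks into $a_{j+1} d_{j+1}$ (and into $a_{j+1} b_{j+1}$ if $b_{j+1}$ exists) from the right; I would fix the smallest index and argue by contradiction that red endpoints cannot have arisen.

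First I would dispose of the case in which both endpoints of $I(d)$ turned red in some sweep $S_i$. Then $d$ was identified as type $\bf a$, $\bf b$, $\bf c$, or $\bf d$ in $S_i$, and its two endpoints lie on zone lines of $S_i$ (or, if $d$ is type $\bf d$ in $S_i$, on two zone lines of $S_i$ by the construction). But in $S_{r+1}$ the vertex $a_k$ is adjacent to $d$ and to both $a_{k-1}$ and $a_{k+1}$, and using the induction hypothesis (4) (the represented graph is $G$ both before and after $S_i$) together with Lemma~\ref{abc-properties}(\ref{old-AAA}) and the fact that peeking into pairs across a zone line produces three $\bf a$-vertices or forces $I(d)$ to be strictly contained in an $\bf a$-interval, I would derive a contradiction with either Proposition~\ref{claims} or Lemma~\ref{abc-properties}(\ref{old-AAA}). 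Concretely, $I(d)$ spanning two zone lines of $S_i$ forces $d$ to be adjacent to three type-$\bf a$ vertices of $S_i$, which were type $\bf a$ in the initial representation (since any sweep may be taken first), contradicting Lemma~\ref{abc-properties}(\ref{old-AAA}).

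Next, the case of exactly one red endpoint: by Remark~\ref{alg-rem}(b), $d$ was a terminal $\bf a$ in the sweep $S_i$ that reddened it, say for the left part with $d = a_{m+1}$ of $S_i$, so the red endpoint of $I(d)$ sits on the leftmost zone line $M$ of $S_i$ and at the other end $I(d)$ is still in its initial closed position. Now I use that $d = d_k$ in $S_{r+1}$: by Definition~\ref{pairs-def}, $d_k$ has the same closed neighborhood as $a_k$ except for adjacency to $a_{k-1}$ and $d_{k-1}$, and because $d_k$ is an NTM-$d$ it also peeks into $a_{k+1} b_{k+1}$ or $a_{k+1} d_{k+1}$ from the right. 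Tracing $R(d_k)$ (the red endpoint) against the zone line $M$ of $S_i$, and using induction hypotheses (1a), (1b), (2), (3a), (3b) to locate the type-$\bf a$ vertices of $S_i$ with endpoints on $M$, I would show that $d$ being a terminal $\bf a$ in $S_i$ forces $I(d)$ to have been strictly contained in some type-$\bf a$ interval, or forces a configuration where $v := $ (a peeker witnessing the NTM-$d$ property, i.e.\ $a_{k+1}$ or a vertex realizing the peek into $a_{k+1}d_{k+1}$) together with $a_k$, $d_k$, $b_k$, and the $S_i$-vertices induces one of the forbidden graphs $K_{2,4}^*$, a graph from Family~2, Family~4, or Family~5 — exactly the patterns that already appeared in the proof of (2). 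This contradicts $G$ being $\cal F$-free.

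The main obstacle will be the bookkeeping in this one-red-endpoint case: one must carefully distinguish whether the red endpoint of $I(d)$ is its left or its right endpoint, whether $d$ was a terminal $\bf a$ in a left part or a right part of $S_i$, and whether $b_{k+1}$ exists or only $d_{k+1}$ does (case (i) vs.\ case (ii) of Definition~\ref{ntm-def}), and then match each sub-case to the correct forbidden configuration. I expect the argument to parallel the Case~2/Case~3 analysis inside the proof of (1a) and (2) almost line for line, with Lemma~\ref{xd-lemma} available to recognize when a right-peeker must itself be a type-$\bf d$ vertex; the cleanest route is probably to reduce, via Remark~\ref{ntm-rem}, to showing $R(d_j)$ white for the largest $j$ where it fails and then invoke the already-proved parts (1)--(3) of Theorem~\ref{alg-correct} together with Lemmas~\ref{abc-properties} and \ref{xd-lemma} to close each branch.
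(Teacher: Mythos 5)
Your overall architecture resembles the paper's: pick an extremal index in the NTM chain, split cases on the red endpoint(s) of $I(d)$, and use the already-proved parts (1)--(3) of Theorem~\ref{alg-correct} plus forbidden subgraphs. But two steps as written would fail. First, in your both-endpoints-red case, the concrete claim that ``$I(d)$ spanning two zone lines of $S_i$ forces $d$ to be adjacent to three type-${\bf a}$ vertices of $S_i$'' is not true: a vertex that participates fully in $S_i$ has its endpoints on two \emph{consecutive} zone lines, matching a single $I(a_j)$ of $S_i$, and whether it then meets $a_{j+1}$ or $a_{j-1}$ depends on whether those endpoints are open or closed (a type ${\bf b}$ participant is open at both ends and meets only $a_j$; moreover $a_{j\pm 1}$ need not even exist). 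The paper does not treat ``both red'' separately; it folds it into the case ``$L(d_k)$ is red,'' where the decisive observations are that $L(d_k)$ must then be \emph{open} (else the type-${\bf a}$ vertex $a$ of $S_i$ with $R(a)=L(d_k)$ would be adjacent to $d_k$ but not to $a_k$, violating Definition~\ref{pairs-def}), that $b_{k+1}$ cannot exist (by induction hypothesis (1b)), hence $d_{k+1}$ exists, and---using \emph{maximality} of $k$ so that $R(d_{k+1})$ is white---the vertices $a_{k+2},a_{k+1},d_{k+1},a_k,d_k,\ahat$ induce a Family 1 tail. Note that you propose to fix the \emph{smallest} bad index; it is the largest one that makes $R(d_{k+1})$ white and lets this step go through.

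Second, the one-red-endpoint case is the crux, and your plan only lists candidate forbidden families without exhibiting the configurations. The paper's argument there is specific: $R(d_k)$ red with $L(d_k)$ white forces $d_k$ to have been a terminal ${\bf a}$ for the left part of some $S_i$, so $R(d_k)$ is closed and lies on a zone line of $S_i$ carrying some $\ahat_\ell$ together with $\bhat_\ell$ or $\chat_\ell$; the rightward NTM chain from $a_nb_n$ down to $d_k$ then meets the left part of $S_i$ to produce a Family 5 graph (when $\chat_\ell$ exists, via $a_k,d_k,\ahat_\ell,\chat_\ell$) or a Family 4 graph (when $\bhat_\ell$ exists, via $a_{k+1},a_k,d_k,\bhat_\ell,\ahat_{\ell-1},\ahat_\ell$). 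This is essentially the only place in the correctness proof where Family 5 is needed, and identifying that interaction of two sweeps is the missing idea; ``one of $K_{2,4}^*$, Family 2, 4, or 5'' is a list of suspects, not yet a proof.
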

     
     \begin{proof}
     For a contradiction, let $k$ be the maximum index for which $d_k$ is an NTM-$d$ vertex for $S_{r+1}$ and $I(d_k)$ has a red endpoint at the start of $S_{r+1}$.
   Suppose $L(d_k)$ is red, so that it turned red during a sweep $S_i$ for some   $  i \le r$ and there exists a zone line $M$ of $S_i$ with $M = L(d_k)$.  (Note that $d_k$ may not have been a type {\bf d} vertex for $S_i$.)

     The induction hypothesis for Theorem~\ref{alg-correct}(3b) implies that there exist  type ${\bf a}$ vertices $a, \ahat$ that participate  in $S_i$ and have $R(a) = L(\ahat) = M$.  If $L(d_k)$ is closed at the start of $S_{r+1}$ then $a$ is adjacent    to $d_k$ but not to $a_k$, contradicting Definition~\ref{pairs-def}.  
     Thus $L(d_k)$ is open.    If $b_{k+1}$ exists then $d_k$ peeks into $a_{k+1}b_{k+1}$ from the right, contradicting induction hypothesis (1b).  Thus by Definition~\ref{ntm-def}, $d_{k+1}$ exists and by the maximality of $k$, we know $R(d_{k+1})$ is white.  By induction hypothesis (3a), zone line $M$ lies strictly to the right of $I(d_{k+1})$.  But this induces a forbidden graph from Family 1 starting at $a_nb_n$ (specified in Remark~\ref{ntm-rem}) and sweeping rightward with tail $a_{k+2}, a_{k+1},d_{k+1}, a_k,d_k, \ahat$, a contradiction.

     Therefore,  $L(d_k)$ is white and $R(d_k)$ is red at the start of $S_{r+1}$ and we may assume that $R(d_k)$ turned red during some earlier sweep $S_i$, and lies on a zone line  of $S_i$.   Since $d_k$ was a terminal ${\bf a}$ for $S_i$, we know $R(d_k)$ is closed at the start of $S_{r+1}$.   By the induction hypothesis (3), there exists $\ahat_\ell$ and either $\bhat_\ell$ or $\chat_\ell$ participating in $S_i$ with $L(\ahat_\ell) = L(\bhat_\ell) = L(\chat_\ell)$.  Since $d_k$ is an NTM-$d$, we know there exists $n \ge 0$ for which $a_{n}b_{n}$ is an $ab$-pair and $d_j$ exists and is an NTM-$d$ for $k \le j \le n-1$.  If $\chat_\ell$ exists, we get a forbidden graph from Family  5  starting at $a_{n}b_{n}$ and sweeping rightward and  meeting the left part of $S_i$  at $a_k,d_k, \ahat_\ell,\chat_\ell$.  If $\bhat_\ell$ exists,  we get a forbidden graph from Family 4 starting  at $a_{n}b_{n}$  and sweeping rightward, with tail induced by vertices $a_{k+1}, a_k, d_k, \bhat_\ell, \ahat_{\ell-1}, \ahat_\ell$, a contradiction. 
     $\qed$
   
     \end{proof}
     
     \smallskip
     
    Now we can present the proof of Theorem~\ref{alg-correct}(4).

     \medskip
\noindent{\bf Proof of (4).}   Consider any two vertices $w,z$ of $G$.  
   Our goal is to show that the intervals assigned to $z$ and $w$  intersect prior to the current sweep $S_{r+1}$ if and only if they intersect after $S_{r+1}$.  This is certainly true if neither $w$ nor $z$ participates in $S_{r+1}$, and it is true by the construction in the algorithm if both $w$ and $z$ participate, so  without loss of generality, we may assume that $w$ participates in the left part of   $S_{r+1}$ and $z$ does not participate in this sweep.  
  
  We consider cases depending on the role $w$ plays in  $S_{r+1}$: it is either $a_k, c_k, b_k,$ or $d_k$ for some $k \ge 0$.    In each case we assume for a contradiction that the intervals for $z$ and $w$ intersect at the start of $S_{r+1}$ but not at the end, or vice versa.  We consider $k \ge 1$;  the arguments for $k = 0$ are analogous.  Let $M$ be the zone line $L(a_{k-1}) = R(a_k)$ at the end of $S_{r+1}$.
  
  First, suppose $w = a_k$, so $L(a_{k})$ is unchanged in $S_{r+1}$ and $R(a_{k})$ is retracted to the left to $M$.    We need only consider $I(z)$ intersecting $I(a_k)$ on the right at the start of $S_{r+1}$ and not intersecting afterwards.  Thus at the start of $S_{r+1}$ we have $M = L(a_{k-1}) < L(z) \le R(a_k)$.  If $R(z) \le R(a_{k-1})$ at the start of $S_{r+1}$ then $I(z) \subseteq I(a_{k-1})$ and $z$ would participate in $S_{r+1}$ as $b_{k-1}$, a contradiction.   Thus $R(z) > R(a_{k-1})$ and $z$ is adjacent to $a_k, a_{k-1}$ and $a_{k-2}$, contradicting Lemma~\ref{abc-properties}(\ref{old-AAA}) (where $a_{k-2}$ is interpreted as $a_1'$ if $k=1$).
  
Second, suppose $w = c_k$, so $L(c_k)$ and $R(c_k)$ are moved to the left.  If $I(z)$ intersects $I(c_k)$ before $S_{r+1}$ but not after, then $I(z)$ must intersect $I(c_k)$ on the right at the beginning of $S_{r+1}$.  By Lemma~\ref{abc-properties}(6), $I(z)$ also intersects $I(a_k)$   before $S_{r+1}$ but not after, contradicting the previous case.   Thus $I(z)$ does not intersect $I(c_k)$ before $S_{r+1}$ but does intersect it afterwards, so  before $S_{r+1}$ we have that $L(a_k) < R(z) < L(c_k)$ and $z$ peeks into $a_kc_k$ from the left.   If $a_k$ is not a merging {\bf a} for $S_{r+1}$  then $z$  participates in  $S_{r+1}$, a contradiction.  Therefore, $a_k$ is  a merging {\bf a} for $S_{r+1}$, $L(a_k)$ turned red during the right part of an earlier sweep $S_i, i \le r$, beginning at some $\hat a_0 \hat b_0$, and for some $n$, $a_k = \hat a_n^\prime$ was a terminal {\bf a} for $S_i$. If we were to perform $S_{r+1}$ after $S_{i-1}$ and before $S_i$ then, by Lemma~\ref{merging-lem} with $j = r+1$, $a_{k+1}$ and $z$ would be distinct vertices peeking into $a_kc_k$ and so $z$ would equal $c_{k+1} = \hat c_{n-1}'$. By reasoning as in Theorem~\ref{alg-correct}(2), we see all endpoints participating in $S_{r+1}$, including those of intervals of the form $I(c_{k+\ell}) = I(\hat c_{n-\ell}')$, would be white at the beginning of $S_{r+1}$ and thus were white at the end of $S_{i-1}$. So these endpoints were white and would have participated in $S_i$ for the original order of the sweeps, immediately after $S_{i-1}$. In particular, $z$ would be moved during $S_i$ and we would have $R(z) = L(a_k)$ at the end of $S_i$ and thus at the beginning of $S_{r+1}$, since red endpoints do not move. This contradicts $L(a_k) < R(z)$.

  Third, consider $w = b_k$.  Since $b_k$ is expanded on both sides during $S_{r+1}$, we need only consider $I(z) $ intersecting $I(b_k)$ after $S_{r+1}$ but not before.  If $I(z)$ lies to the left of $I(b_k)$ before $S_{r+1}$, then $z$ peeks into $a_kb_k$  and would have participated in $S_{r+1}$, a contradiction.  Therefore, $R(b_k) < L(z) < M$.  By Lemma~\ref{xd-lemma},  we know $z = d_{k-1}$.  By Definition~\ref{ntm-def}, we know that either all of $d_{k-1}, d_{k-2}, \ldots, d_0$ exist and are NTM-$d$'s, or there exists an index $m$ with $0 < m < k$ for which $d_{k-1}, d_{k-2}, \ldots, d_m$ exist 
  and   are all NTM-$d$'s but $d_{m-1}$ does not exist.    Lemma~\ref{D-lemma} tells us that the endpoints of each of these NTM-$d$'s are white at the beginning of $S_{r+1}$.  In either case, in step [k.5] of Algorithm~\ref{sweep}, these NTM-$d$'s will be moved, including the interval for $z = d_{k-1}$, and thus $z$ would have participated in in $S_{r+1}$, a contradiction.

  Finally, consider $w = d_k$. In this case, $L(d_k)$ and $R(d_k)$ are moved to the right during $S_{r+1}$.  If $I(z)$ intersects $I(d_k)$ before $S_{r+1}$ but not after then $I(z)$ intersects $I(d_k)$ on the left before $S_{r+1}$.   Thus $I(z)$ intersects $I(d_k)$ but not $I(a_k)$, contradicting Definition~\ref{pairs-def}.  So $I(z)$ intersects $I(d_k)$ after $S_{r+1}$ but not before, and then at the beginning of $S_{r+1}$ we know $I(z)$ intersects $I(a_k)$ but not $I(d_k)$.  By Definition~\ref{pairs-def}, $z = a_{k-1}$ or $z = d_{k-1}$.  But $z$ does not participate in sweep $S_{r+1}$ so $z = d_{k-1}$, and since $L(z) < M=L(a_{k-1})$
    the algorithm would not redefine $I(d_k)$ in step $[k.5]$. Since $w=d_k$ participates in $S_{r+1}$, this is a contradiction.
  
  This completes the proof of (4) and of Theorem~\ref{alg-correct}.
	    $\qed$

\smallskip

Finally, in Corollary~\ref{finish-proof} we use the results in Theorem~\ref{alg-correct} to complete the proof of Theorem~\ref{big-thm}, our main result.

     \begin{corollary}
     \label{finish-proof}
     If $G$ is a twin-free interval graph with no induced graph from the forbidden set ${\cal F}$ then $G$ is a strict mixed interval graph.
     \end{corollary}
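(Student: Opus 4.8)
The plan is to harvest the machinery built up in this section. First I would fix a twin-free interval graph $G$ with no induced graph from $\mathcal F$. By Proposition~\ref{peeker-prop}, $G$ has a closed interval representation with distinct endpoints satisfying the inclusion property, so $G$ together with this representation satisfies Hypothesis~\ref{hyp} --- exactly the input required by Algorithm~\ref{sweep}. I would run the algorithm on this representation, with every endpoint initially coloured white. It then remains to check that the algorithm halts and to see what its output is.

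For halting I would count red endpoints. By Theorem~\ref{alg-correct}(2), every endpoint that the algorithm colours red was white immediately beforehand, and once red it is never moved again. Each base step of a sweep $S_i$ colours the four endpoints of $I(a_0)$ and $I(b_0)$ red, and these are white at the start of $S_i$ by Theorem~\ref{alg-correct}(1a), since $I(b_0)$ is strictly contained in $I(a_0)$ there; moreover each subsequent numbered step of the sweep colours at least one further participating endpoint (for instance $R(a_k)$ in step $[k.1]$) red, and by Theorem~\ref{alg-correct}(2) such an endpoint was white at the start of the sweep. Since there are only $2|V(G)|$ endpoints in all and red endpoints never revert, the algorithm can perform only finitely many such steps; after them, step $[0.1]$ finds no $ab$-pair and the algorithm STOPs.

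To identify the output I would invoke Theorem~\ref{alg-correct}(4): after every complete sweep the graph represented by the current intervals is exactly $G$, and in particular this holds after the last sweep performed (and trivially if none is performed, by Hypothesis~\ref{hyp}). The intervals output by the algorithm are closed, open, or half-open, so they constitute an $\mathcal R$-intersection representation of $G$ with $\mathcal R = \mathcal A \cup \mathcal B \cup \mathcal C \cup \mathcal D$; and the algorithm halted only once no strict inclusion remains, so this representation is strict. Hence $G$ is a strict mixed interval graph. Together with Proposition~\ref{forb-graphs-prop} and the implication $(2)\Rightarrow(1)$ established right after Theorem~\ref{big-thm}, this also completes the proof of Theorem~\ref{big-thm}.

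I expect the proof of the corollary itself to be essentially this assembly, with no genuinely new obstacle: all the substantive work lives in Theorem~\ref{alg-correct} and its supporting lemmas (Lemmas~\ref{abc-properties}, \ref{xd-lemma}, \ref{merging-lem}, \ref{D-lemma}), whose proofs occupy the bulk of the section. Within the assembly the load-bearing facts are Theorem~\ref{alg-correct}(1a) and (2) --- strict inclusions involve only white endpoints, and red endpoints are permanently frozen --- which together make the ``red-endpoint count'' a legitimate monovariant and hence force termination, and Theorem~\ref{alg-correct}(4), which guarantees that the transformed representation still represents $G$.
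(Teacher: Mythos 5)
Your proposal is correct and follows essentially the same route as the paper's own proof: fix a representation via Proposition~\ref{peeker-prop}, run Algorithm~\ref{sweep}, argue termination from Theorem~\ref{alg-correct}(1a) and (2) (each sweep permanently turns white endpoints red, so only finitely many sweeps are possible), and conclude via Theorem~\ref{alg-correct}(4) that the resulting strict mixed representation still represents $G$. The only difference is that you make the red-endpoint counting monovariant slightly more explicit than the paper does, which is a presentational rather than a substantive change.
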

     
     \begin{proof}
     Using Proposition~\ref{peeker-prop}, fix a closed interval representation of $G$ satisfying Hypothesis~\ref{hyp}.  Color all endpoints white and apply Algorithm~\ref{sweep} to this representation.  Each sweep  $S_i$ of the algorithm starts with a strict  inclusion and the endpoints of these two intervals are white by Theorem~\ref{alg-correct}(1a).  At the end of $S_i$ these four endpoints (and others) have turned red.  By Theorem~\ref{alg-correct}(2), red endpoints never participate in sweeps, thus the algorithm must terminate when no strict inclusions remain.  At this stage we have a strict mixed interval representation, and by Theorem~\ref{alg-correct}(4) the graph represented is the original graph $G$.  Thus $G$ is a strict mixed interval graph.  \qed
     \end{proof}
     
      \subsection{Complexity}
      
      Given a twin-free, $\cal F$-free graph $G = (V, E)$ we can determine if $G$ is an interval graph in $O(|V| + |E|)$ time \cite{BoLeu}. If $G$ is an interval graph, by Proposition~\ref{peeker-prop} we can obtain a representation of $G$ satisfying Hypothesis~\ref{hyp}. Now sort the endpoints of the intervals so they are listed in increasing order.  Associate with each endpoint a data structure containing the name of  the vertex to which it belongs, its color, whether it is a right or left endpoint and whether the endpoint is open or closed.  Depending on the representation, this requires no more than $O(|V| \log |V|)$ time.  Sweep through the list, enqueuing each vertex when its left endpoint is encountered and dequeuing the vertex when its right endpoint is encountered.  Two intervals that are not dequeued in the same order as they are enqueued represent an $ab$-pair.  Start the base step for the first sweep of Algorithm~\ref{sweep} with the  first such pair found.  It is straightforward to verify that the participants of this sweep can be identified and their endpoints modified according to Tables 1 and 2 in $O(|V|)$ time.   Since each sweep reduces the number of strict inclusions by at least one, there are at most  $O(|V|)$ sweeps and Algorithm~\ref{sweep} runs in $O(|V|^2)$ time.

\end{document}